\documentclass[11pt]{article}
\usepackage[margin=1in]{geometry}
\usepackage{graphicx,xcolor,cite,mathtools}
\usepackage{amssymb,amsmath,amsthm,mathrsfs,paralist,bm,esint,setspace}
\RequirePackage[colorlinks,citecolor=blue,urlcolor=blue]{hyperref}
\usepackage{marginnote}

\newcommand{\R}{\mathbb{R}}
\newcommand{\E}{\mathrm{E}}
\newcommand{\HH}{\mathcal{H}}

\renewcommand{\d}{\mathrm{d}}
\newcommand{\<}{\langle}
\renewcommand{\>}{\rangle}
\newcommand{\e}{\mathrm{e}}
\newcommand{\Var}{\text{\rm Var}}

\newcommand{\lip}{\text{\rm Lip}}

\DeclareMathOperator{\Cov}{\text{\rm Cov}}

\input cyracc.def

\newtheorem{stat}{Statement}[section]
\newtheorem{proposition}[stat]{Proposition}

\newtheorem{theorem}[stat]{Theorem}
\newtheorem{lemma}[stat]{Lemma}
\theoremstyle{definition}

\numberwithin{equation}{section}

\title{Spatial ergodicity and central limit theorems for parabolic Anderson model with delta initial condition
	}
\author{Le Chen\\Emory University\\\texttt{le.chen@emory.edu}\\
	\and
		Davar Khoshnevisan\\University of Utah\\\texttt{davar@math.utah.edu}\\
	\and\and
		David Nualart\\University of Kansas\\\texttt{nualart@ku.edu}\\
	\and
		Fei Pu\\Beijing Normal University\\\texttt{pufeibnu@gmail.com}
	}
\date{\today}

\begin{document}
\maketitle

\begin{abstract}
	Let $\{u(t\,, x)\}_{t >0, x \in\R}$ denote the solution to the
	parabolic Anderson model with  initial condition $\delta_0$ and driven by space-time
	white noise on $\R_+\times\R$, and let $\bm{p}_t(x):=
	(2\pi t)^{-1/2}\exp\{-x^2/(2t)\}$ denote the standard Gaussian
	heat kernel on the line. We use a non-trivial adaptation of
	the methods in our companion papers \cite{CKNP,CKNP_b} in order to prove that the random field
	$x\mapsto u(t\,,x)/\bm{p}_t(x)$ is  ergodic for every $t >0$. And we establish an associated
	quantitative central limit theorem following the approach based on
	the Malliavin-Stein method introduced in  Huang, Nualart, and Viitasaari \cite{HNV2018}.
\end{abstract}

\bigskip

\noindent{\it \noindent MSC 2010 subject classification}: 60H15, 60H07, 60F05.
 \smallskip

\noindent{\it Keywords}: Parabolic Anderson model, ergodicity, central limit theorem,
	Malliavin calculus, Stein's method, delta initial condition.
\smallskip

\noindent{\it Running head:} Ergodicity and CLT for PAM.


\section{Introduction}

Consider the {\em parabolic Anderson model},
\begin{equation}\label{PAM}
	\partial_t u(t\,, x) = \tfrac12\partial_x^2 u(t\,, x) + u(t\,, x)\eta(t \,, x),\qquad t >0, \, x \in \R,
\end{equation}
with delta initial condition $u(0) = \delta_0$, where $\eta$ denotes space-time white noise on
$\R_+\times\R$.
Following Walsh\cite{Walsh}, we interpret the stochastic PDE \eqref{PAM} in
the following mild form:
\begin{align}\label{mild}
	u(t\, , x) = \bm{p}_{t}(x) + \int_{(0,t)\times\R}\bm{p}_{t - s}(x - y)u(s\,, y)\,\eta(\d s\,\d y),
\end{align}
where
\[
	\bm{p}_{t}(x) = \frac{1}{\sqrt{2\pi t}}\,\e^{-x^2/(2t)}
	\qquad\text{for all }t > 0\text{ and }x \in \R.
\]

Consider the following renormalization of the solution to \eqref{PAM}:
\begin{equation} \label{U}
	U(t\,,x):= \frac{u(t\,,x)}{\bm{p}_t(x)}
	\qquad\text{for all $t>0$ and $x\in\R$}.
\end{equation}
It is not too hard to prove that $\lim_{t\downarrow0}U(t\,,x)=1$ in $L^k(\Omega)$
for all $x\in\R$ and $k\ge 2$; see Lemma \ref{lem:||U-1||} below.
Therefore, we also define
\[
	U(0\,,x) :=1\qquad\text{for all $x\in\R$},
\]
throughout.

Amir, Corwin, and Quastel \cite[Proposition 1.4]{ACQ11} have shown
that the process $U(t):=\{U(t\,,x)\}_{x\in\R}$ is stationary for every $t>0$.
The formulation \eqref{mild} of the stochastic PDE \eqref{PAM}
can be recast equivalently in terms of $U$ as follows:
\[
	U(t\,,x) = 1 + \int_{(0,t)\times\R}\frac{\bm{p}_{t - s}(x - y) \bm{p}_s(y)}{\bm{p}_t(x)}\,
	U(s\,,y)\,\eta(\d s\,\d y).
\]
Because
\begin{equation}\label{PPPP}
	\frac{\bm{p}_{t-s}(a)\bm{p}_s(b)}{\bm{p}_t(a+b)} =
	\bm{p}_{s(t-s)/t}\left( b - \frac st (a+b)\right)
	\quad\text{for all $0<s<t$ and $a,b\in\R$},\footnote{%
	In fact, both sides of \eqref{PPPP}
	represent the probability density of $X_s$ at $b$ where $X$
	denotes a Brownian bridge that emenates from zero and is conditioned to reach  $a+b$ at time $t$.
	}
\end{equation}
equation \eqref{mild} can be recast as the following random evolution equation for $U$:
\begin{align}\label{E:U}
	U(t\,,x) = 1 +\int_{(0,t)\times\R}
	U(s\,,y) \bm{p}_{s(t-s)/t}\left(y- \frac{s}{t}x \right)
	\eta(\d s\, \d y).
\end{align}

The purpose of this paper is to study asymptotic properties of the stationary
process $U(t)$, equivalently $u(t)/\bm{p}_t$.  The main results are stated as
the following three theorems.

\begin{theorem}\label{ergodicity}
		 The process $U(t)$ is weakly mixing, hence also ergodic, for every $t>0$.
\end{theorem}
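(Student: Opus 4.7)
The plan is to establish weak mixing of $U(t)$ directly, which implies ergodicity, by adapting the Wiener-chaos method of \cite{CKNP,CKNP_b} to the delta-initial-condition setting captured in \eqref{E:U}. Weak mixing of the stationary process $U(t)$ is equivalent to
\[
	\lim_{R\to\infty}\frac{1}{R}\int_0^R\bigl|\Cov(F,\tau_z G)\bigr|\,\d z=0
\]
for every pair $F,G$ in a dense subclass of $L^2(\sigma(U(t)))$, where $\tau_z$ denotes the spatial translation acting by $\tau_z[\psi(U(t,y_1),\dots,U(t,y_m))]=\psi(U(t,y_1+z),\dots,U(t,y_m+z))$. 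By a standard density argument, it is enough to treat smooth bounded cylindrical functionals $F=\varphi(U(t,x_1),\dots,U(t,x_k))$ and $G=\psi(U(t,y_1),\dots,U(t,y_m))$.

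The first step is to iterate the random evolution equation \eqref{E:U} to obtain the Wiener chaos expansion
\[
	U(t,x)=1+\sum_{n\ge 1}I_n(h_{t,x,n}),
\]
where $I_n$ is the $n$-fold multiple stochastic integral against $\eta$ and $h_{t,x,n}$ is the symmetrization of an explicit product of ``bridge densities'' $\bm{p}_{s_i(t-s_i)/t}\!\left(y_i-\tfrac{s_i}{t}x\right)$ over the simplex $0<s_1<\cdots<s_n<t$. Verifying the $L^2$-convergence bound $\sum_{n}n!\,\|h_{t,x,n}\|_{L^2}^2<\infty$ is the first delicate point, because the factor $\bm{p}_{s(t-s)/t}$ blows up as $s\downarrow 0$ or $s\uparrow t$---a feature absent from the constant-initial-condition setting of \cite{CKNP}. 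Expanding $F$ and $G$ into chaos via this series and invoking the It\^o isometry produces
\[
	\Cov(F,\tau_z G)=\sum_{n\ge 1}n!\,\bigl\langle\phi_n^F,\,\phi_n^{\tau_z G}\bigr\rangle_{L^2((0,t)\times\R)^{\otimes n}},
\]
where $\phi_n^F$ and $\phi_n^G$ are the chaos kernels of $F-\E[F]$ and $G-\E[G]$, and the shift acts on kernels by $\phi_n^{\tau_z G}(s_1,y_1,\dots,s_n,y_n)=\phi_n^G\!\left(s_1,y_1-\tfrac{s_1}{t}z,\dots,s_n,y_n-\tfrac{s_n}{t}z\right)$ in accordance with \eqref{E:U}.

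It then suffices to prove that for each $n\ge 1$ the inner product $\langle\phi_n^F,\phi_n^{\tau_z G}\rangle$ vanishes in the Ces\`aro sense as $|z|\to\infty$, together with a summable-in-$n$ bound uniform in $z$ so that Dominated Convergence may be applied. The main obstacle will be the kernel-level analysis: each bridge density $\bm{p}_{s(t-s)/t}(y-\tfrac{s}{t}x)$ is concentrated around $y=(s/t)x$ with width of order $\sqrt{s(t-s)/t}$, so the supports of $h_{t,x,n}$ and $h_{t,x+z,n}$ separate linearly in $s$ with slope $z/t$ but merge as $s\downarrow 0$; the shifted kernels therefore never fully decouple near the left endpoint of the time interval. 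Quantifying this partial decorrelation while preserving the small-$s$ integrability established in the first step is exactly the ``non-trivial adaptation'' of \cite{CKNP,CKNP_b} alluded to in the abstract; once it is in place, the Ces\`aro decay of $\Cov(F,\tau_z G)$ follows and weak mixing of $U(t)$ is established.
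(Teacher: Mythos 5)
Your proposal is a plan rather than a proof: the two steps that constitute the actual mathematical content are both deferred. First, for nonlinear cylindrical functionals $F=\varphi(U(t,x_1),\dots,U(t,x_k))$ the chaos kernels $\phi_n^F$ are not explicit, and you give no mechanism (e.g.\ Stroock-type formulas with bounds on iterated Malliavin derivatives) to control them or their inner products with the skew-shifted kernels $\phi_n^{\tau_z G}$; you only describe the kernels of $U(t,x)$ itself, and even these are misstated --- iterating \eqref{E:U} gives chained bridge densities, for $0<s_1<\cdots<s_n<t$,
\begin{equation*}
	h_{t,x,n}(s_1,y_1,\dots,s_n,y_n)=\bm{p}_{s_n(t-s_n)/t}\left(y_n-\tfrac{s_n}{t}x\right)\prod_{i=1}^{n-1}\bm{p}_{s_i(s_{i+1}-s_i)/s_{i+1}}\left(y_i-\tfrac{s_i}{s_{i+1}}y_{i+1}\right),
\end{equation*}
so only the top factor involves $x$, not a product of $\bm{p}_{s_i(t-s_i)/t}(y_i-\tfrac{s_i}{t}x)$ over all $i$ (this is reparable, but it matters for the decoupling analysis you sketch). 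Second, the term-by-term Ces\`aro decay of $\langle\phi_n^F,\phi_n^{\tau_z G}\rangle$ together with a bound summable in $n$ uniformly in $z$ --- which you yourself identify as ``the main obstacle'' and ``exactly the non-trivial adaptation'' --- is never carried out. Since that is precisely where the difficulty caused by the delta initial condition lives, the argument as written has a genuine gap and does not establish the theorem. (Your identification of the spatial shift of $U(t,\cdot)$ with the measure-preserving shear $(s,y)\mapsto(s,y+\tfrac st z)$ of the noise is correct and is a sound starting point, but it does not by itself produce the needed kernel estimates.)

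For comparison, the paper's proof avoids chaos expansions entirely: by the reduction in \cite[Corollary 9.1]{CKNP}, weak mixing follows once $\Cov[\mathcal{G}(x),\mathcal{G}(0)]\to0$ as $|x|\to\infty$ for products $\mathcal{G}(x)=\prod_j g_j(U(t,x+\zeta^j))$ of Lipschitz functions, and this covariance is bounded via the Clark--Ocone-based Poincar\'e inequality \eqref{Poincare:Cov}, which only requires the first Malliavin derivative. The chain rule plus the estimate $\|D_{s,y}u(t,x)\|_k\le C_{T,k}\,\bm{p}_{t-s}(x-y)\bm{p}_s(y)$ of Lemma \ref{derivative:estimate} and the identity \eqref{PPPP} yield
\begin{equation*}
	\left|\Cov[\mathcal{G}(x),\mathcal{G}(0)]\right|\le c^2\sum_{j,\ell=1}^{k}\int_0^t\bm{p}_{2s(t-s)/t}\left(\frac st\left(x+\zeta^j-\zeta^\ell\right)\right)\d s\longrightarrow 0,
\end{equation*}
which handles arbitrary nonlinearities in one stroke. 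If you wish to salvage your route, you would need a quantitative bound on the chaos kernels of $\varphi(U(t,\cdot))$; the Poincar\'e-inequality route exists precisely to sidestep that.
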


It follows immediately from \eqref{E:U} that $\E[U(t\,,x)]=1$. Therefore, Theorem \ref{ergodicity}
and the ergodic theorem together imply that for all $t\ge0$,
\begin{equation}\label{erg:thm}
	\lim_{N\to\infty} \frac1N\int_0^NU(t\,,x)\,\d x=1
	\qquad\text{a.s.\ and in $L^1(\Omega)$}.
\end{equation}
In fact, Lemma \ref{lem:||S||} below implies
that \eqref{erg:thm} holds in $L^k(\Omega)$
for every $k\ge1$.

The next two theorems {describe} the rate of convergence in the ergodic theorem \eqref{erg:thm}.
In order to state those theorems, let us introduce
\begin{equation} \label{average}
	\mathcal{S}_{N,t} := \frac 1N \int_0^N [ U(t\,,x) -1] \,\d x
	\qquad\text{for all $N>0$ and $t\ge0$}.
\end{equation}
Then we have the following quantitative central limit theorem.

\begin{theorem}\label{TVD}
	For every $t>0$ there exists
	a real number $c = c(t) >0$ and $N_0=N_0(t)>\e$
	such that for all $N \geq  N_0$,
	\begin{align}\label{TVDeq}
		d_{\rm TV} \left(  \frac{ \mathcal{S}_{N,t}}
		{ \sqrt{{\rm Var}(\mathcal{S}_{N,t})}} ~,~
		{\rm N}(0\,,1)\right) \leq  c\,\sqrt{\frac{\log N}{ N}},
	\end{align}
	where $d_{\rm TV}$ denotes the total variation distance, and ${\rm N}(\mu\,,\sigma^2)$
	denotes the normal law with mean $\mu\in\R$ and variance $\sigma^2>0$.
\end{theorem}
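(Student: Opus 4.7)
The plan is to follow the Malliavin--Stein methodology of Huang--Nualart--Viitasaari~\cite{HNV2018}, non-trivially adapted as in the companion paper~\cite{CKNP_b}. Let $F_N := \mathcal{S}_{N,t}$, which is centered by \eqref{E:U}, and write $\sigma_N^2 := \Var(F_N)$. The starting point is the general Malliavin--Stein inequality
$$d_{\rm TV}\!\left( F_N/\sigma_N\,,\,{\rm N}(0,1)\right) \le \frac{2}{\sigma_N^2}\sqrt{\Var\bigl(\langle DF_N,\, -DL^{-1}F_N\rangle_{\HH}\bigr)},$$
where $D$ denotes the Malliavin derivative with respect to $\eta$, $L$ is the Ornstein--Uhlenbeck generator of the associated Wiener chaos, and $\HH := L^2(\R_+\times\R)$. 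The argument then splits into two pieces: a matching lower bound $\sigma_N^2 \gtrsim (\log N)/N$, and an upper bound of order $(\log N)^3/N^3$ on the variance of $\langle DF_N,-DL^{-1}F_N\rangle_{\HH}$; their ratio yields the stated rate $\sqrt{\log N}/\sqrt N$.

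To control the correction term, the first step is to derive an SPDE for the Malliavin derivative by differentiating \eqref{E:U}: for $0<s<t$ and $y\in\R$,
$$D_{s,y}U(t,x) = U(s,y)\,\bm{p}_{s(t-s)/t}\!\left(y - \tfrac{s}{t}x\right) + \int_{(s,t)\times\R} D_{s,y}U(r,z)\, \bm{p}_{r(t-r)/t}\!\left(z - \tfrac{r}{t}x\right)\eta(\d r,\d z).$$
Walsh's isometry, a Picard iteration, and uniform $L^k(\Omega)$-bounds on $U$ should yield the pointwise estimate $\E[(D_{s,y}U(t,x))^2] \lesssim [\bm{p}_{s(t-s)/t}(y-sx/t)]^2$, together with an analogous bound for $D^2U$. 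Plugging these into the chaos expansion $U(t,x)-1 = \sum_{n\ge 1}I_n(f_n^{t,x})$ and into the standard Nualart--Peccati formula for $\Var(\langle DF_N,-DL^{-1}F_N\rangle_{\HH})$ will reduce the problem to bounding four-point integrals of certain covariance-type kernels. These are in turn controllable by the two-point integral
$$\int_0^N\!\!\int_0^N \bigl|\Cov(U(t,x),U(t,y))\bigr|\,\d x\,\d y,$$
which is expected to be of order $N\log N$; this same quantity produces the matching variance lower bound $\sigma_N^2 \gtrsim (\log N)/N$.

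The principal obstacle is the tension between the stationarity of $U(t,\cdot)$ established by Amir--Corwin--Quastel and the fact that the underlying evolution equation \eqref{E:U} is \emph{not} translation invariant: its drift kernel $\bm{p}_{s(t-s)/t}(y-sx/t)$ depends explicitly on $x$, and the Gaussian factor $\bm{p}_t(x)$ in the denominator of the normalization $U=u/\bm{p}_t$ causes pointwise moment bounds on Malliavin derivatives to carry an $x$-dependence that worsens as $|x|\to\infty$. The technical heart of the proof will be to verify that, after integrating in $x$ over $[0,N]$ against the stationary covariance structure, these apparent $x$-dependencies cancel and one recovers the clean $(\log N)/N$ scaling needed for both the lower bound on $\sigma_N^2$ and the upper bound on the Malliavin--Stein correction. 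This cancellation is precisely the non-trivial adaptation of~\cite{CKNP_b} foreshadowed in the abstract.
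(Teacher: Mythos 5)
Your skeleton has the right shape: the Malliavin--Stein bound, the variance asymptotics $\sigma_N^2\sim 2t\log(N)/N$, a bound of order $(\log N)^3/N^3$ on the variance of the correction term, and a first-derivative estimate for $U$ that (via \eqref{PPPP}) is exactly Lemma \ref{derivative:estimate}. But the central quantitative step is asserted rather than proved, and the mechanism you propose for it is not adequate as stated. Working with $\langle DF_N\,,-DL^{-1}F_N\rangle_{\HH}$ for a random variable with infinitely many chaoses forces you either through the full chaos-contraction machinery or through a second-order Poincar\'e argument requiring bounds on $D^2u$ with delta initial data; you gesture at both (``chaos expansion'', ``analogous bound for $D^2U$'') but develop neither. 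The paper sidesteps both: since $v_{N,t}(s,y)=g_{N,t}(s,y)U(s,y)$ in \eqref{g+v} is adapted, $\mathcal{S}_{N,t}=\delta(v_{N,t})$ by \eqref{S=delta(v)}, and Proposition \ref{pr:MS} lets one replace $-DL^{-1}F$ by the \emph{explicit} $v_{N,t}$; then $\langle D\mathcal{S}_{N,t}\,,v_{N,t}\rangle_{\HH}=\mathcal{X}+\mathcal{Y}$, where $\Var(\mathcal{Y})$ is computed directly by the It\^o--Walsh isometry and $\Var(\mathcal{X})$ by the Poincar\'e inequality using only first derivatives (Lemmas \ref{lem:Var(X)} and \ref{lem:Var(Y)}). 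Moreover, your claim that the resulting four-point integrals are ``controllable by the two-point integral $\int_0^N\int_0^N|\Cov(U(t,x),U(t,y))|\,\d x\,\d y\asymp N\log N$'' is not substantiated and does not by itself yield any rate: in the paper the $(\log N)^3$ arises from three nested logarithmically divergent $\d s/s$-type integrals generated by the bound $g_{N,\nu}(s,y)\le (\nu/s)N^{-1}$, a mechanism that has nothing to do with the two-point covariance integral. Establishing that bound (or its analogue for $-DL^{-1}F_N$) is precisely where the work lies, and it is missing.

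Two smaller points. First, the obstacle you single out is illusory: the derivative bound for the normalized field is $\|D_{s,y}U(t,x)\|_k\le C\,\bm{p}_{s(t-s)/t}\left(y-\tfrac{s}{t}x\right)$, which is uniform in $x$; nothing worsens as $|x|\to\infty$. The genuine delta-initial-condition difficulty is the singularity of $g_{N,t}(s,\cdot)$ as $s\downarrow0$, which is what produces the logarithmic factors. Second, your variance lower bound is also only ``expected''; the theorem needs the actual asymptotics of Proposition \ref{pr:Cov:asymp}, proved via the exact second-moment formula for $u$ and a Parseval computation, since the final bound is $2\sqrt{K_T}\,(\log N)^{3/2}N^{-3/2}/\Var(\mathcal{S}_{N,t})$.
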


Theorem \ref{TVD} tacitly implies also that $\Var(\mathcal{S}_{N,t})>0$ for all
$N$ large.  As part of the proof of Theorem \ref{TVD}, we in fact prove in
Proposition \ref{pr:Cov:asymp} below that
\begin{equation}\label{AVar}
	\Var(\mathcal{S}_{N,t})\sim\frac{2t\log N}{N}
	\qquad\text{as $N\to\infty$}.
\end{equation}
Therefore, Theorem \ref{TVD}  implies that, for all $t>0$,
\begin{equation}  \label{CLT1}
	\sqrt{\frac{N}{\log N}}\, \mathcal{S}_{N,t}
	\xrightarrow{\text{\rm d}\,}{\rm N} (0\,,2t)
	\qquad\text{as $N\to\infty$}.
\end{equation}
where ``$\xrightarrow{\text{\rm d}\,}$'' denotes convergence in distribution.
Since the limiting variance  $2t$ is a linear function of $t$, the above
suggests the existence of a functional CLT with a Brownian limit. This is
confirmed by the next result of this section.

\begin{theorem}\label{th:FCLT}
	Choose and fix a real number $T>0$.
	Then, as $N\to\infty$,
	\begin{equation}\label{FCLT}
		\sqrt{\frac{N}{\log N}}\,
		\mathcal{S}_{N,\bullet}
		\xrightarrow{C[0,T]} \sqrt 2 B,
	\end{equation}
	where $B$ denotes a standard one-dimensional Brownian motion, and
	``$\xrightarrow{C[0,T]}$'' denotes weak convergence in the Banach space
	$C[0\,,T]$ of all continuous, real-valued functions on $[0\,,T]$, endowed
	with the {uniform} topology.
\end{theorem}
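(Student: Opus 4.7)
I would follow the standard two-step route for functional weak convergence in $C[0\,,T]$: establish (i) convergence of the finite-dimensional distributions (f.d.d.) of $X_N(t):=\sqrt{N/\log N}\,\mathcal{S}_{N,t}$ to those of $\sqrt{2}\,B$, and (ii) tightness of the family $\{X_N\}_N$ in $C[0\,,T]$. Since $U(0\,,x)=1$ forces $\mathcal{S}_{N,0}=0$, both $X_N$ and the limit vanish at $t=0$.

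\textbf{F.d.d.\ convergence.} By Cram\'er--Wold, fix $0\le t_1<\cdots<t_k\le T$ and $a_1,\ldots,a_k\in\R$, and study $\sum_j a_j X_N(t_j)$. Stochastic Fubini applied to \eqref{E:U} rewrites
\[
	\mathcal{S}_{N,t} = \int_{(0,t)\times\R} U(r\,,y)\,K_{t,N}(r\,,y)\,\eta(\d r\,\d y),
	\qquad
	K_{t,N}(r\,,y):=\frac{1}{N}\int_0^N \bm{p}_{r(t-r)/t}\!\left(y-\tfrac{r}{t}\,x\right)\d x,
\]
so any linear combination is again a single Walsh integral, to which the univariate Malliavin--Stein argument underlying Theorem~\ref{TVD} applies. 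The new ingredient is the asymptotic off-diagonal covariance: by the Walsh isometry and spatial stationarity of $U(r)$,
\[
	\Cov(\mathcal{S}_{N,s}\,,\mathcal{S}_{N,t})
	= \int_0^{s\wedge t}\d r\int_\R \d y\,\E[U(r\,,0)^2]\,K_{s,N}(r\,,y)\,K_{t,N}(r\,,y),
\]
which I expect to satisfy $\Cov(\mathcal{S}_{N,s}\,,\mathcal{S}_{N,t})\sim 2(s\wedge t)(\log N)/N$ as $N\to\infty$. This I would derive by reducing the $y$-integral via $\int_\R \bm{p}_\alpha(y-a)\bm{p}_\beta(y-b)\,\d y=\bm{p}_{\alpha+\beta}(a-b)$ and then adapting the same double-integral-in-$(x_1,x_2)\in[0,N]^2$ analysis that underlies \eqref{AVar}. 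Together these inputs identify the joint limit as a centered Gaussian vector with covariance $2(s\wedge t)$---the law of $\sqrt{2}\,B$ at the chosen times.

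\textbf{Tightness.} I would verify the Kolmogorov criterion: find $p>2$ and $\gamma>0$ such that
\[
	\E\bigl|X_N(t)-X_N(s)\bigr|^p \le C(T)\,|t-s|^{1+\gamma}\qquad\text{for all } 0\le s\le t\le T\text{ and all large }N.
\]
For $s<t$, decompose
\[
	\mathcal{S}_{N,t}-\mathcal{S}_{N,s}
	= \int_{(s,t)\times\R} U\,K_{t,N}\,\d\eta
	+ \int_{(0,s)\times\R} U\,[K_{t,N}-K_{s,N}]\,\d\eta.
\]
Burkholder--Davis--Gundy for Walsh integrals, combined with uniform $L^p$ control of $U$ (via Lemma~\ref{lem:||U-1||} together with bootstrapping the mild formulation \eqref{E:U}), reduces the bound to the deterministic quantities $\iint_{(s,t)\times\R} K_{t,N}^2\,\d r\,\d y$ and $\iint_{(0,s)\times\R}[K_{t,N}-K_{s,N}]^2\,\d r\,\d y$. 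Each should produce a factor of order $|t-s|\cdot(\log N)/N$, which after multiplying by the $N/\log N$ normalization yields the desired H\"older-type bound.

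\textbf{Main obstacle.} The hardest ingredient is the kernel-difference estimate for $\int_\R[K_{t,N}(r\,,y)-K_{s,N}(r\,,y)]^2\,\d y$. The kernel $\bm{p}_{r(t-r)/t}(y-rx/t)$ depends on $t$ through both its variance and its recentering, and these effects balance differently for $r$ near $0$ versus near $s$. Producing the correct $|t-s|\cdot(\log N)/N$ order---rather than one inflated by extra powers of $N$ coming from the $1/t$ singularity in the recentering---will require a careful splitting of the $r$-interval and delicate use of Gaussian smoothing. Once this deterministic estimate is in hand, the rest of the argument is standard.
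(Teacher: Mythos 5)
Your overall architecture (f.d.d.\ convergence plus tightness, with the f.d.d.\ part driven by Malliavin--Stein and the covariance asymptotics) is the paper's architecture. On the f.d.d.\ side your Cram\'er--Wold reduction to a single Walsh integral is a mild variant of what the paper does: the paper applies a multivariate Malliavin--Stein bound (Proposition \ref{lemma: NP 6.1.2}) directly to the vector $(\mathcal{S}_{N,t_1},\ldots,\mathcal{S}_{N,t_m})$, whereas you would apply the univariate bound to linear combinations. Either way, note that expanding $\langle DF, v\rangle_{\HH}$ for a linear combination produces the \emph{cross} terms $\langle D\mathcal{S}_{N,t_i}, v_{N,t_j}\rangle_{\HH}$ with $t_i\ne t_j$, so you need the variance bound of Proposition \ref{Var<Ds,v>} uniformly in two time parameters (which is exactly why the paper states it for all $t,\tau\in(0,T)$), as well as the off-diagonal covariance limit, which is Proposition \ref{pr:Cov:asymp} with $t_1\ne t_2$; your sketch of that computation is consistent with the paper's.

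The genuine gap is in the tightness step, precisely at the estimate you flag as the main obstacle. The bound you posit --- that both kernel integrals are of order $|t-s|\,(\log N)/N$ uniformly in $0\le s\le t\le T$ and $N$ --- is not attainable. Near $t=0$ it is provably false: for fixed $N$ and $t\downarrow 0$ one has $\Var(\mathcal{S}_{N,t})\asymp t\log(1/t)/N$ (this is the computation behind Proposition \ref{pr:local} and Theorem \ref{th:no:FTV}), so once $\log(1/t)\gg \log N$ no bound $C\,t\,(\log N)/N$ with $C=C(T)$ can hold; the sharp uniform statement carries the extra factor $\log_+(1/t)$, as in Lemma \ref{lem:||S||}. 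Moreover, for the kernel difference $K_{t,N}-K_{s,N}$ the two effects you identify behave differently: the recentering part does yield order $|t-s|$ (the $J_2$ term in the paper's Lemma \ref{lem:tightness}), but the variance-modulation part (the $J_1$ term) gives only $\sqrt{|t-s|}$, and with a singular factor $t^{-(1+\delta)}$. The paper's resolution is not a finer splitting of the $r$-integral but a two-regime argument: prove the weaker increment bound of Lemma \ref{lem:tightness} (order $\varepsilon^{1/4}$ in $L^k$ with a $(t\wedge1)^{-(1+\delta)/2}$ blow-up), use Lemma \ref{lem:||S||} to control $\|\mathcal{S}_{N,t+\varepsilon}\|_k+\|\mathcal{S}_{N,t}\|_k$ when $t\le\varepsilon^\beta$, and optimize over $\beta$ to get Proposition \ref{pr:tightness} with H\"older exponent $\gamma\in(0,1/4)$; since $k$ can be taken arbitrarily large, this still satisfies the Kolmogorov criterion $\E|X_N(t)-X_N(s)|^k\lesssim |t-s|^{\gamma k}$ with $\gamma k>1$. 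So your plan is repairable, but as written the tightness estimate you rely on would fail, and the repair requires exactly this combination of a weaker away-from-zero bound with the small-$t$ a priori bound rather than the clean Brownian-scaling modulus you anticipate.
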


Theorem \ref{TVD} indicates the convergence in total variation distance
of the one-dimensional laws. It seems conceivable that one can obtain the convergence in total variation distance
of the finite-dimensional distributions. 
Moreover, one might wonder if the weak convergence to
Brownian motion in Theorem \ref{th:FCLT} can be replaced by convergence in
total variation. We leave this question as an open problem for the interested
readers. \bigskip

{\noindent\bf Open problem:~} Does the process $\{
	\sqrt{N/\log N}\,\mathcal{S}_{N,t}\}_{t\in[0,T]}$ converge to $\{\sqrt{2}\,
B_t\}_{t\in[0,T]}$ in total variation, as $N\to\infty$, for any $T>0$?
\bigskip

Now let us compare our work with the existing ones to show the difficulties and
hence the contributions of the current paper.  First, regarding Theorem
\ref{ergodicity}, in Chen et al \cite{CKNP}, we used Poincar\'{e}-type
inequalities and Malliavin calculus in order to establish the spatial
ergodicity for a large class of parabolic stochastic PDEs that include the
parabolic Anderson model with  {\em flat initial condition} $u(0) \equiv 1$.
Broadly speaking, the method in \cite{CKNP} is also employed here in order to
prove Theorem \ref{ergodicity}.  However, because the initial profile of
\eqref{PAM} is the singular measure $\delta_0$, novel technical issues arise.
Chief among them is the fact that the Malliavin derivative of the solution to
\eqref{PAM} behaves radically differently from the case with constant initial
data.  This can be seen by comparing our Lemma \ref{derivative:estimate} with
Theorem 6.4 of \cite{CKNP}. As a result, the Poincar\'{e}-type inequality [see
\eqref{Poincare:Cov}] yields a $(\log N/N)$-decay rate, which is bigger than
the $1/N$-rate obtained in the flat case \cite{CKNP}, and the asymptotic
variance \eqref{AVar} is likewise different from the case of flat initial data.
The Poincar\'{e}-type inequality  \eqref{Poincare:Cov} is based on the
Clark-Ocone formula, and the latter plays an import role not only in this
context, but in fact throughout the paper.

Secondly, for Theorem \ref{TVD}, such total variation estimates for spatial
averages of solutions to parabolic stochastic PDEs were introduced by Huang,
Nualart, and Viitasaari \cite{HNV2018} for the one-dimensional stochastic heat
equation driven by a  space-time white noise, and  later extended in Huang,
Nualart,  Viitasaari, and Zheng \cite{HNVZ2019} to the  multidimensional
stochastic heat equation driven by a noise whose spatially homogeneous
covariance is a suitable Riesz kernel.  The main ingredient in deriving such
estimates is the Malliavin-Stein approach (see Nourdin and Peccati
\cite{NP09,NP}) which provides a convergence rate, in total variation distance,
using a combination of Malliavin calculus and Stein's method for normal
approximations.  But unlike the case considered in Huang et al \cite{HNV2018},
where the initial condition was $u(0)\equiv1$, in our setting  the solution to
\eqref{PAM} with delta initial condition is scaled by the heat kernel, and this
produces asymptotic variance for spatial averages of order $\log (N)/N$; see
\eqref{AVar}. As a consequence, we need to normalize the average in
\eqref{CLT1} by the unconventional rate $\sqrt{N/\log N}$.
Moreover, the $\sqrt{\log N/N}$-rate of convergence
of the total variation distance in Theorem \ref{TVD}
is a natural one, which is of the same order as $\sqrt{\Var(\mathcal{S}_{N,t})}$ as $N\to\infty$ (see \eqref{AVar}).
Such a relation also holds in the context of Malliavin-Stein approach to central limit theorems for other types of SPDEs; 
see \cite{DNZ2018, HNV2018, HNVZ2019, NZ2021}.
Furthermore, the presence of these unexpected logarithmic factors is new in the literature, which shows the slow decorrelation of the random field $U(t)$, and can be attributed to the singularity of the delta initial condition. 

Lastly,  the functional central limit theorem stated in Theorem \ref{th:FCLT}
is the counterpart in our framework of  Theorem 1.2 in \cite{HNV2018}. The
convergence in law of finite-dimensional distributions is obtained using the
Malliavin-Stein approach as in the proof of Theorem 1.2 in \cite{HNV2018}, but
the proof of tightness, however,  is more involved due to the singularity of
the initial condition and requires computations which are different from those
in \cite{HNV2018} (see the proof of Proposition \ref{pr:tightness}).  \bigskip

In the following, after introducing some preliminaries in \S\ref{sec:pre}, we
first prove Theorem \ref{ergodicity} in \S\ref{sec:ergodicity}.  Then we
establish an asymptotic results for the covariance of $\mathcal{S}_{N,t}$ in
\S\ref{sec:Asym}, which will be used in the proof of Theorems \ref{TVD} and
\ref{th:FCLT} in \S\ref{Sec:TVD} and \S\ref{sec:6}, respectively. Finally, some
technical lemmas are proved in Appendix.

Let us close the Introduction with a brief description of the notation of this paper.
For every $Z\in L^k(\Omega)$, we write $\|Z\|_k$ instead of
$(\E[|Z|^k])^{1/k}$. Let $\lip$ denote the class of all Lipschitz-continuous,
real-valued  functions on $\R$, and define for all $g: \R\to\R$,
\[
	\lip(g) := \sup_{-\infty<a<b<\infty}
	\frac{|g(b)-g(a)|}{|b-a|}.
\]
Thus, $g\in\lip$ if and only if $\lip(g)<\infty$. Recall that if $g\in\lip$,
then Rademacher's theorem (see Federer \cite[Theorem 3.1.6]{Federer})
ensures that $g$ has a weak derivative
whose essential supremum is $\lip(g)$.
Let $g'$ denote a given measurable version of that derivative.
Throughout, we define
\[
	\log_+(x):=\log(\e+x)\qquad\text{for every $x\ge0$}.
\]
We also use ``$\widehat{\phantom{f}}$'' to denote the Fourier transform,
normalized so that
\[
	\hat{f}(x) = \int_{-\infty}^\infty
	\e^{ixy}f(y)\,\d y\qquad\text{for all $x\in\R$ and $f\in L^1(\R)$.}
\]

\section{Preliminaries}
\label{sec:pre}
\subsection{Clark-Ocone formula}
Let $\mathcal{H}= L^2(\R_+ \times \R)$.
The Gaussian family $\{ W(h)\}_{h \in \mathcal{H}}$ formed by the Wiener integrals
\[
	W(h)= \int_{\R_+\times \R}  h(s\,,x)\, \eta(\d s\, \d x)
\]
defines an  {\it isonormal Gaussian process} on the Hilbert space $\mathcal{H}$.
In this framework we can develop the Malliavin calculus (see Nualart \cite{Nualart}).
We denote by $D$ the derivative operator.
Let $\{\mathcal{F}_s\}_{s \geq 0}$ denote the filtration generated by
the space-time white noise $\eta$.

We recall the following Clark-Ocone formula  (see Chen et al \cite[Proposition 6.3]{CKNP}):
\[
	F= \E [F]  + \int_{\R_+\times\R}
	\E\left[D_{s,y} F \mid \mathcal{F}_s\right] \eta(\d s \, \d z)
	\qquad\text{a.s.},
\]
valid for every random variable $F$ in the Gaussian Sobolev space $\mathbb{D}^{1,2}$.
Thanks to Jensen's inequality for conditional expectations, the above Clark-Ocone formula
readily yields the following Poincar\'e-type inequality, which plays an important
role throughout the paper:
\begin{equation} \label{Poincare:Cov}
	|\Cov(F\,, G)| \le \int_0^{\infty}\d s \int_{-\infty}^\infty\d z\
	\left\| D_{s,z } F  \right\|_2
	\left\| D_{s,z}G \right\|_2
	\qquad\text{for all $F,G\in\mathbb{D}^{1,2}$.}
\end{equation}

\subsection{Malliavin derivative of \texorpdfstring{$u(t\,,x)$}{u(t,x)}}
According to Chen, Hu, and Nualart \cite[Proposition 5.1]{CHN}
(see Chen and Huang \cite[Proposition 3.2]{CH19Densiy} for the higher-dimensional case),
\[
	u(t\,, x)\in\bigcap_{k\ge 2}\mathbb{D}^{1, k}
	\qquad\text{for all   $t>0$        and $x\in\R$},
\]
and the corresponding Malliavin derivative $Du(t\,,x)$ satisfies the
following stochastic integral equation: For $s\in(0\,,t)$,
\begin{align*}
	D_{s, y}u(t\,,x) = \bm{p}_{t - s}(x - y)u(s\,,y) + \int_{(s,t)\times\R}
	\bm{p}_{t - r}(x - z)D_{s, y}u(r\,,z)\,\eta({\d r\, \d z})\qquad\text{a.s.}
\end{align*}
We offer the following estimate on the Malliavin derivative of $u(t\,,x)$.
\begin{lemma}\label{derivative:estimate}
	For every $T >0$ and $k \geq 2$, there exists a real number $C_{T, k} >0$
	such that for $t\in(0\,,T)$ and $x \in \R$, and for almost every $(s\,, y)
	\in (0\,, t) \times \R$,
	\begin{align}\label{Du(t,x)}
		\|D_{s, y}u(t\,,x)\|_k \leq C_{T, k}\,\bm{p}_{t - s}(x - y)\bm{p}_s(y).
	\end{align}
\end{lemma}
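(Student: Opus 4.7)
The plan is to exploit the linear structure of the integral equation for $D_{s,y}u(t\,,x)$ together with a noise-shift argument, in order to identify $D_{s,y}u(t\,,x)$ as the product of $u(s\,,y)$ and an independent copy of $u(t-s\,,x-y)$. This will reduce the bound \eqref{Du(t,x)} to the heat-kernel moment estimate $\|u(\tau\,,z)\|_k \leq c_{T,k}\,\bm{p}_\tau(z)$ for the solution itself --- a known consequence of a standard Picard iteration using the Burkholder--Davis--Gundy inequality for the mild equation \eqref{mild} with $\delta_0$ initial data.

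First, fix $s\in(0\,,t)$ and $y\in\R$, and introduce the shifted white noise $\tilde\eta(\d r\,,\d z') := \eta(\d(s+r)\,,\d(y+z'))$, which is itself a space-time white noise and is independent of $\mathcal{F}_s$. Substituting $t=s+\tau$, $x=y+z$, $r=s+r'$ in the integral equation for $D_{s,y}u$ displayed immediately above the lemma, the field $v(\tau\,,z) := D_{s,y}u(s+\tau\,,y+z)$ satisfies
\[
v(\tau\,,z) = u(s\,,y)\,\bm{p}_\tau(z) + \int_{(0,\tau)\times\R} \bm{p}_{\tau-r'}(z-z')\, v(r'\,,z')\, \tilde\eta(\d r'\,,\d z').
\]
Let $\hat u$ denote the mild solution of \eqref{PAM} on $(0\,,T-s]\times\R$ with $\delta_0$ initial data, but driven by $\tilde\eta$. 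Because $u(s\,,y)$ is $\mathcal{F}_s$-measurable while $\tilde\eta$ is independent of $\mathcal{F}_s$, the random scalar $u(s\,,y)$ commutes with Walsh integration against $\tilde\eta$, and hence the field $u(s\,,y)\,\hat u(\tau\,,z)$ also solves the equation above satisfied by $v(\tau\,,z)$. Pathwise $L^k$-uniqueness of that linear equation then forces
\[
D_{s,y}u(t\,,x) = v(t-s\,,x-y) = u(s\,,y)\,\hat u(t-s\,,x-y).
\]

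Conclusion is then immediate. Since $\hat u$ is built only from $\tilde\eta$, it is independent of $\mathcal{F}_s$, and in particular of $u(s\,,y)$; moreover $\hat u$ has the same law as $u$. Taking $L^k$-norms and using independence yields
\[
\|D_{s,y}u(t\,,x)\|_k = \|u(s\,,y)\|_k\,\|u(t-s\,,x-y)\|_k,
\]
and applying the heat-kernel moment bound $\|u(\tau\,,z)\|_k \le c_{T,k}\,\bm{p}_\tau(z)$ to each factor produces \eqref{Du(t,x)} with $C_{T,k} = c_{T,k}^{\,2}$. The main obstacle is the factorization step: one must justify that $u(s\,,y)$ may indeed be pulled inside a Walsh integral against a noise that is independent of $\mathcal{F}_s$, and then invoke pathwise uniqueness for the resulting linear SPDE. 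Both points are routine but need to be checked carefully because the equation has singular (delta-like) initial data at time $s$.
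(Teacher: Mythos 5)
Your factorization idea is a genuinely different route from the paper's. The identity you aim for, $D_{s,y}u(t\,,x)=u(s\,,y)\,\hat u(t-s\,,x-y)$ with $\hat u$ the solution driven by the time-space shifted noise, is indeed true for this linear (multiplicative) equation, and once it is in hand the bound \eqref{Du(t,x)} follows exactly as you say, from independence and the moment bound $\|u(\tau\,,z)\|_k\le c_{T,k}\,\bm{p}_\tau(z)$ of \cite{CJKS,CD15}. The paper instead never identifies $Du$ explicitly: it differentiates the Picard iterates $u_n$ of \eqref{mild}, iterates the BDG inequality to get the uniform bound $\sup_n\|D_{s,y}u_n(t\,,x)\|_k\le c'_{T,k}\,\bm{p}_{t-s}(x-y)\bm{p}_s(y)$ (the series with the $\Gamma(1/2)^n/\Gamma(n/2)$ factors), and transfers this to $Du(t\,,x)$ through weak convergence in $L^2(\Omega;\mathcal{H})$ plus a mollification/duality argument. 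That design is not an accident: it is built to avoid ever needing pointwise-in-$(s\,,y)$ information about the limiting derivative.

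This is where your argument has a genuine gap, in the step you yourself flag as ``routine'': the invocation of pathwise ($L^2$ or $L^k$) uniqueness for the linear equation satisfied by $v(\tau\,,z)=D_{s,y}u(s+\tau\,,y+z)$. Uniqueness for the heat equation with multiplicative noise and measure-valued (here $u(s\,,y)\delta_0$-type) initial data, as in Chen--Dalang \cite{CD15}, is proved within a class of adapted fields satisfying an a priori moment bound of the form $\sup_{\tau,z}\|v(\tau\,,z)\|_2/\bm{p}_\tau(z)<\infty$, or at least enough integrability of $(\tau\,,z)\mapsto\E[|v(\tau\,,z)|^2]$ against squared heat kernels to close a Gronwall/iteration argument. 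For the candidate $u(s\,,y)\,\hat u$ such bounds are immediate, but for $D_{s,y}u$ itself they are not: the result of \cite{CHN} quoted in the paper only gives $u(t\,,x)\in\mathbb{D}^{1,k}$, i.e.\ an estimate on $Du(t\,,x)$ integrated over $(s\,,y)$, and the integral equation for $D_{s,y}u$ holds only for a.e.\ $(s\,,y)$, possibly interpreted as an identity in $L^2(\Omega;\mathcal{H})$. Thus, for a fixed $(s\,,y)$ you do not know a priori that $(r\,,z)\mapsto\|D_{s,y}u(r\,,z)\|_2$ lies in the uniqueness class --- and an estimate of precisely that kind is the conclusion of the lemma, so the argument as written is close to circular. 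To repair it you would either have to establish, for a.e.\ $(s\,,y)$, the needed square-integrability of $D_{s,y}u(\cdot\,,\cdot)$ before applying uniqueness (which essentially forces you back to a Picard-iteration estimate like the paper's), or prove the factorization at the level of the iterates $u_n$ and pass to the limit with the same weak-convergence/duality care the paper employs. The other point you flag --- pulling the $\mathcal{F}_s$-measurable factor $u(s\,,y)$ inside the Walsh integral against the shifted noise --- is indeed routine after enlarging the filtration of $\tilde\eta$ by $\mathcal{F}_s$, and is not the issue.
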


\begin{proof}
	The proof is similar to the proof of Theorem 6.4 of Chen et al \cite{CKNP}.
	Fix $t\in(0\,,T)$ and $x \in \R$. Let $u_0(t \,, x) = \bm{p}_t(x)$
	for every $x\in\R$, and define iteratively, for every $n\in\mathbb{Z}_+$,
	\begin{equation}\label{dn1}
		u_{n+1}(t\,,x) := \bm{p}_t(x) +
		\int_{(0,t)\times\R}\bm{p}_{t-r}(x-z)u_n(r\,,z) \,\eta(\d r\,\d z).
	\end{equation}
	Conus, Joseph, Khoshnevisan, and Shiu \cite[Theorem 3.3]{CJKS} and Chen and
	Dalang \cite[Theorem 2.4]{CD15} found independently, and at the same time,
	that there exists a real number $c_{T, k} > 0$ such that for all $(s\,, y)
	\in (0\,, T] \times \R$,
	\begin{equation}\label{||u(s,y)||}
		\sup_{n\in\mathbb{Z}_+}
		\|u_n(s\,, y)\|_k  \vee \|u(s\,, y)\|_k \leq c_{T, k}\, \bm{p}_s(y).
	\end{equation}
	We apply the properties of the divergence operator \cite[Prop.\ 1.3.8]{Nualart}
	in order to deduce from \eqref{dn1} that for almost every $(s\,, y) \in (0\,, t)\times \R$,
	\begin{equation}\label{derivative}
		D_{s, y}u_{n + 1}(t\,,x) = \bm{p}_{t - s}(x - y)u_n(s\,,y) +
		\int_{(s,t)\times\R}\bm{p}_{t - r}(x - z)D_{s, y}u_n(r\,,z)\,\eta(\d r\,\d z)
		\qquad\text{a.s.}
	\end{equation}
	By \eqref{derivative}, \eqref{||u(s,y)||}, and a suitable form of the
	Burkholder-Davis-Gundy inequality (BDG),
	\begin{equation}\label{||Du(t,x)||}
		\|D_{s, y}u_{n+1}(t\,,x)\|_k^2 \leq 2c_{T,k}^2\,
		\bm{p}^2_{t - s}(x - y)\bm{p}^2_s(y) +
		2c_k \int_s^t\d r\int_{-\infty}^\infty\d z\ \bm{p}^2_{t - r}(x - z)\|D_{s, y}u_n(r\,,z)\|_k^2,
	\end{equation}
	where $c_k=4k$; see \cite[(5.6)]{CKNP}.
 	Let $C_k := (2c_{T,k}^2)\vee(2c_k)$. We can iterate \eqref{||Du(t,x)||} to find that
	\begin{align}
		& \|D_{s, y}u_{n + 1}(t\,,x)\|_k^2 \nonumber \\
		& \quad\leq C_k\,\bm{p}^2_{t - s}(x - y)\bm{p}^2_s(y) +
			C_k^2\bm{p}^2_s(y)\int_s^t\d r_1\int_{-\infty}^\infty\d z_1\
			\bm{p}^2_{t - r_1}(x - z_1)\bm{p}^2_{r_1 - s}(z_1 - y)\nonumber \\
		&\quad \quad + \cdots + C_k^{n}\bm{p}^2_s(y)
			\int_s^t\d r_1\int_{-\infty}^\infty\d z_1\int_s^{r_1}\d r_2
			\int_{-\infty}^\infty\d z_2 \cdots\int_s^{r_{n -2}}\d r_{n-1}
			\int_{-\infty}^\infty\d z_{n-1}\ \bm{p}^2_{t - r_1}(x - z_1) \nonumber \\
		& \hskip1in \times \bm{p}^2_{r_1 - r_2}(z_1 - z_2)\times \cdots\times
			\bm{p}^2_{r_{n -1} - s}(z_{n -1} - y)\nonumber  \\
		&\quad \quad +  C_k^{n+1}\bm{p}^2_s(y)\int_s^t\d r_1
			\int_{-\infty}^\infty\d z_1\int_s^{r_1}\d r_2
			\int_{-\infty}^\infty\d z_2 \cdots\int_s^{r_{n -1}}\d r_n
			\int_{-\infty}^\infty\d z_n\  \bm{p}^2_{t - r_1}(x - z_1)\nonumber \\
		& \hskip1in \times \bm{p}^2_{r_1 - r_2}(z_1 - z_2)\times \cdots\times
			\bm{p}^2_{r_{n -1} - r_n}(z_{n -1} - z_n)\bm{p}^2_{r_{n} - s}(z_{n} - y). \label{Du_{n+1}}
	\end{align}
	In order to simplify the preceding expression, let us first use the elementary identity \eqref{PPPP}
	in order to see that
	\[
		\int_{-\infty}^\infty\bm{p}^2_{t -s}(x-y)\bm{p}^2_{s -r}(y-z)\,\d y
		= \sqrt{\frac{t -r}{4\pi (t -s)(s -r)}}\,\bm{p}^2_{t -r}(x-z).
	\]
	Consequently,
	 \begin{align}
		&\int_s^t\d r_1\int_{-\infty}^\infty\d z_1\int_s^{r_1}\d r_2
			\int_{-\infty}^\infty\d z_2 \cdots\int_s^{r_{n -1}}\d r_n
			\int_{-\infty}^\infty\d z_n\nonumber \\
		& \hskip.5in  \bm{p}^2_{t - r_1}(x - z_1) \bm{p}^2_{r_1 - r_2}(z_1 - z_2)\times \cdots\times
			\bm{p}^2_{r_{n -1} - r_n}(z_{n -1} - z_n)\bm{p}^2_{r_{n} - s}(z_{n} - y) \nonumber \\
		&\quad = (4\pi)^{-n/2}\,\bm{p}^2_{t-r}(x-y)\int_s^t\d r_1
			\int_s^{r_1}\d r_2\cdots\int_s^{r_{n-1}}\d r_n\,
			\sqrt{\frac{t -s}{(t -r_1)(r_1 -r_2)\cdots(r_{n-1}- r_n)(r_n -s)}}  \nonumber \\
		& \quad = \left(\frac{t-s}{4\pi}\right)^{n/2}
			\bm{p}^2_{t-r}(x-y)\int_{0<r_n<\cdots<r_1<1}
			\frac{\d r_1\cdots\d r_n}{\sqrt{(1-r_1)(r_1-r_2)\cdots r_n}} \nonumber \\
		& \quad = \left(\frac{t-s}{4\pi}\right)^{n/2}
			\frac{\Gamma(1/2)^{n}}{\Gamma(n/2)}\,
			\bm{p}^2_{t - s}(x - y). \label{Du_{n+1}'}
	\end{align}
	Together, \eqref{Du_{n+1}} and \eqref{Du_{n+1}'} yield
	\begin{align*}
		\|D_{s, y}u_{n + 1}(t\,,x)\|_k^2 &\leq
			\bm{p}^2_{t - s}(x - y)\bm{p}^2_s(y)  \sum_{j = 0}^n C_k^{j+ 1}
			\left(\frac{t-s}{4\pi}\right)^{j/2}
			\frac{\Gamma(1/2)^{j}}{\Gamma(j/2)} \\
		& \leq \bm{p}^2_{t - s}(x - y)\bm{p}^2_s(y)  \sum_{j = 0}^\infty
		\frac{C_k^{j+ 1}T^{j/2}}{(4\pi)^{j/2}}\frac{\Gamma(1/2)^{j}}{\Gamma(j/2)}.
	\end{align*}
	Since the above series is convergent, we can conclude that there exists $c'_{T, k} > 0$
	such that for almost every $(s\,, y)\in (0\,, t) \times \R$,
	\begin{align}\label{Du_n}
		\sup_{n \geq 0}\|D_{s, y}u_{n}(t\,,x)\|_k &\leq c'_{T, k}\, \bm{p}_{t - s}(x - y)\bm{p}_s(y).
	\end{align}
	Moreover, \eqref{PPPP} and \eqref{Du_n} together yield
	\begin{equation}\label{||Du_n||}\begin{split}
		\sup_{n \geq 0}\E\left(\|Du_{n}(t\,,x)\|_{\mathcal{H}}^2\right)
			&\le (c_{T,2}')^2\int_0^t\d s\int_{-\infty}^\infty\d y\ \bm{p}^2_{t - s}(x - y)\bm{p}^2_s(y)\\
		&=(c_{T,k}')^2\bm{p}^2_t(x)\int_0^t\d s\int_{-\infty}^\infty\d y\
			\bm{p}_{s(t-s)/t}^2\left( y - \frac st x\right)\\
		&=(c_{T,k}')^2\bm{p}^2_t(x)\int_0^t \sqrt{\frac{t}{4\pi s(t-s)}}\,\d s<\infty,
	\end{split}\end{equation}
	where we have used the semigroup property of the heat kernel in the final
	identity.  It follows from  \eqref{||Du_n||} and the closability properties
	of the Malliavin derivative that  there exists a subsequence $n(1) <
	n(2)<\cdots$ of positive integers such that $Du_{n(\ell)}(t\,,x)$  converges
	to $Du(t\,,x)$ in the weak topology of $L^2(\Omega\, ; \mathcal{H})$.  Then,
	we use a smooth approximation $\{\psi_\varepsilon\}_{\varepsilon>0}$ to the
	identity in $\R_+\times \R$,  and apply Fatou's lemma and duality for
	$L^k$-spaces, in order to find that for almost every $(s\,,y) \in (0\,,t)
	\times \R$ and for all $k\ge 2$,
	\begin{align*}
		\|D_{s,y}u(t\,,x) \|_k & \le \liminf_{\varepsilon \to 0}
			\left \| \int_0^\infty\d s'\int_{-\infty}^\infty\d y'\, D_{s',y'} u(t\,,x)
			\psi_\varepsilon(s-s', y-y')\right\|_k\\
		& \le  \liminf_{\varepsilon \to 0}
			\sup_{\|G \|_{k/(k - 1)}\le 1}
			\left| \int_0^\infty\d s'\int_{-\infty}^\infty\d y'\,  \E\left[ G D_{s',y'} u(t\,,x) \right]
			\psi_\varepsilon(s-s', y-y') \right|.
	\end{align*}
	Choose and fix a random variable $G\in L^{2}(\Omega)$ such that $\| G
	\|_{k/(k-1)} \le 1$.  Because $Du_{n(\ell)}(t\,,x)$ converges weakly in
	$L^2(\Omega\,;\HH)$ to $Du(t\,,x)$ as $\ell\to\infty$, we can write
	\begin{align*}
		& \left| \int_0^\infty\d s'\int_{-\infty}^\infty\d y'\, \E \left[ G D_{s',y'} u(t\,,x) \right]
			\psi_\varepsilon(s-s', y-y')  \right|  \\
		&\hskip1.5in= \lim _{\ell\rightarrow \infty} \left|
			\int_0^\infty\d s'\int_{-\infty}^\infty\d y'\,  \E\left[ G D_{s',y'} u_{n(\ell)}(t\,,x) \right]
			\psi_\varepsilon(s-s', y-y') \right| \\
		&\hskip1.5in\le\limsup_{\ell\to\infty}
			\int_0^\infty\d s'\int_{-\infty}^\infty\d y'\, \left\| D_{s',y'} u_{n(\ell)}(t\,,x) \right\|_k
			\psi_\varepsilon(s-s', y-y')  \\
		&\hskip1.5in\le c'_{T, k}
			\int_0^\infty\d s'\int_{-\infty}^\infty\d y'\,  \bm{1}_{(0,t)}(s')
			\bm{p}_{t - s'}(x - y')\bm{p}_{s'}(y')
			\psi_\varepsilon(s-s', y-y').
	\end{align*}
	Let $\varepsilon\to 0$ to conclude the proof of \eqref{Du(t,x)}.
\end{proof}

\subsection{The Malliavin-Stein method}

Recall that if $X$ and $Y$ are random variables with respective probability distributions $\mu$ and $\nu$ on
$\R$, then the total variation distance between $X$ and $Y$ is defined as
\[
	d_{\rm TV}  (X\,,Y)= \sup_{B\in \mathcal{B}(\R)} | \mu(B)- \nu(B)|,
\]
where  $\mathcal{B}(\R)$ denotes the family of all Borel subsets of $\R$. The
same sort of definition continues to hold when $X$ and $Y$ are abstract random variables
on a topological space $\mathbb{X}$, except $\mathcal{B}(\R)$ is replaced
by $\mathcal{B}(\mathbb{X})$.

We abuse  notation and let $d_{\rm TV}(F\,,{\rm N}(0\,,1))$  denote the total
variation distance between the law of $F$ and the ${\rm N}(0\,,1)$ law.  The
following bound on $d_{\rm TV}(F\,,{\rm N}(0\,,1))$ follows from a suitable
combination of ideas from the Malliavin calculus and Stein's method for normal
approximations; see Nualart and Nualart \cite[Theorem 8.2.1]{NN}.

\begin{proposition}\label{pr:MS}
	Suppose that $F\in \mathbb{D}^{1,2}$ satisfies $\E(F^2)=1$ and  $F=\delta(v)$
	for some $v$ in the $L^2(\Omega)$-domain  of the divergence operator $\delta$.
	Then,
	\[
		d_{\rm TV} (F\,,  {\rm N}(0\,,1)) \le
		2\sqrt{ {\rm Var} \left ( \langle DF\,, v \rangle_{\mathcal{H}} \right) }.
	\]
\end{proposition}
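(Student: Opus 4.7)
The plan is to assemble two standard ingredients: a Stein-type characterization of the total variation distance for normal approximation, and the Malliavin duality $\E[\delta(v)G]=\E[\<DG\,,v\>_\HH]$ between the derivative and divergence operators. The hypotheses $F=\delta(v)$ and $\E(F^2)=1$ are tailor-made so that these two tools combine cleanly.

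First I would recall that, for every Borel set $B\subset\R$, the Stein equation $f'(x)-xf(x)=\mathbf{1}_B(x)-\P(Z\in B)$ with $Z\sim{\rm N}(0\,,1)$ admits a solution $f_B$ with $\|f_B'\|_\infty\le 2$, and
\begin{equation*}
	d_{\rm TV}(F\,,{\rm N}(0\,,1))=\sup_{B}\bigl|\E[f_B'(F)-F f_B(F)]\bigr|.
\end{equation*}
It therefore suffices to bound $|\E[f'(F)-F f(F)]|$ uniformly in differentiable $f$ with $\|f'\|_\infty\le 2$. Next, for such an $f$, the chain rule for Lipschitz functions places $f(F)$ in $\mathbb{D}^{1,2}$ with $Df(F)=f'(F)\,DF$, and combining the Malliavin duality with the hypothesis $F=\delta(v)$ gives
\begin{equation*}
	\E[F f(F)]=\E[\delta(v)f(F)]=\E\bigl[\<Df(F)\,,v\>_\HH\bigr]=\E\bigl[f'(F)\<DF\,,v\>_\HH\bigr].
\end{equation*}
Hence $\E[f'(F)-F f(F)]=\E\bigl[f'(F)\bigl(1-\<DF\,,v\>_\HH\bigr)\bigr]$.

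Finally, applying the same duality with $G=F$ produces $\E[\<DF\,,v\>_\HH]=\E[F\delta(v)]=\E[F^2]=1$, so $1-\<DF\,,v\>_\HH$ has mean zero. The bound $\|f'\|_\infty\le 2$ followed by Cauchy--Schwarz then yields
\begin{equation*}
	\bigl|\E[f'(F)-F f(F)]\bigr|\le 2\,\E\bigl|1-\<DF\,,v\>_\HH\bigr|\le 2\sqrt{\Var\bigl(\<DF\,,v\>_\HH\bigr)},
\end{equation*}
and taking the supremum over $f$ produces the claimed inequality. The only delicate point is the chain rule for a merely Lipschitz $f$, which is handled by mollifying $f$, applying the standard chain rule in $\mathbb{D}^{1,2}$, and passing to the limit using the closability of $D$; this step is completely routine. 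Beyond it, the argument is purely formal, which accounts for the effectiveness of the Malliavin--Stein approach in the quantitative CLTs invoked elsewhere in the paper.
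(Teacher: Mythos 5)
The paper does not actually prove this proposition; it quotes it from Nualart and Nualart \cite[Theorem 8.2.1]{NN}, and your argument is exactly the standard proof of that cited result: the Stein equation with $\|f_B'\|_\infty\le 2$, the duality $\E[\delta(v)f(F)]=\E[\<Df(F),v\>_{\HH}]$ giving $\E[Ff(F)]=\E[f'(F)\<DF,v\>_{\HH}]$, the observation that $\E\<DF,v\>_{\HH}=\E[F^2]=1$, and Cauchy--Schwarz. The argument is correct; the only step needing the care you already flag is the passage from smooth test functions to indicators (equivalently, the chain rule for Lipschitz $f$ when the law of $F$ need not be absolutely continuous), which is handled by the routine reduction of $d_{\rm TV}$ to continuous $h$ followed by mollification, noting that mollification does not increase $\|f'\|_\infty$.
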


In the proof of Theorem \ref{FCLT} we will make use of the following
generalization of  a result of Nourdin and Peccati \cite[Theorem 6.1.2]{NP}.

\begin{proposition}\label{lemma: NP 6.1.2}
	Let $F=( F^{(1)}, \dots, F^{(m)})$ be a random vector such that, for every $i=1,\ldots,m$,
	$F^{(i)} = \delta (v^{(i)})$ for some $v^{(i)} \in {\rm Dom}\, [\delta]$.
	Assume additionally that
	$F^{(i)} \in \mathbb{D}^{1,2}$ for $i=1,\dots,m$. Let $G$ be a centered
	$m$-dimensional Gaussian random vector with covariance matrix $(C_{i,j}) _{1\le i,j \le m} $.
	Then, for every $h\in C^2(\R^m)$ that has bounded second partial derivatives,
	\[
		| \E( h(F)) -\E (h(G)) | \le \tfrac 12 \|h ''\|_\infty
		\sqrt{   \sum_{i,j=1}^m   \E \left( \left|
		C_{i,j} - \langle DF^{(i)}\,, v^{(j)} \rangle_{\HH} \right|^2
		\right)},
	\]
	where
	\[
		\|h'' \| _\infty := \adjustlimits\max_{1\le i,j \le m}
		\sup_{x\in\R^m}  \left| \frac { \partial ^2h (x) } {\partial x_i \partial x_j} \right|.
	\]
\end{proposition}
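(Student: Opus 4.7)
The plan is to carry out the smart-path interpolation argument of \cite[Theorem 6.1.2]{NP} in the present multivariate Skorohod-integral setting. On a suitably enlarged probability space on which $F$ and $G$ are independent, set
\[
	X_t := \sqrt{1-t}\,F + \sqrt t\, G\quad\text{and}\quad \Psi(t) := \E[h(X_t)]
	\qquad\text{for }t\in[0,1],
\]
so that $\E[h(F)] - \E[h(G)] = -\int_0^1\Psi'(t)\,\d t$ and the entire proof reduces to a uniform estimate on $\Psi'(t)$. Since $h\in C^2(\R^m)$ has bounded second derivatives, differentiation under the expectation on $(0,1)$ gives
\[
	\Psi'(t) = \sum_{i=1}^m \E\!\left[\partial_i h(X_t)\left(\frac{G^{(i)}}{2\sqrt t} - \frac{F^{(i)}}{2\sqrt{1-t}}\right)\right].
\]

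The key is to rewrite both summands on the right as expectations of second-order derivatives of $h$. For the Gaussian summand, I would condition on $F$ and apply the multivariate Stein identity $\E[G^{(i)}\phi(G)] = \sum_j C_{i,j}\E[\partial_j\phi(G)]$ to a shifted version of $\partial_i h$; the extra $\sqrt t$ factor that comes out cancels the $1/(2\sqrt t)$ prefactor. For the $F$-summand, I would use $F^{(i)}=\delta(v^{(i)})$ and the duality formula $\E[\delta(v^{(i)})\,\partial_i h(X_t)] = \E\langle v^{(i)}, D\partial_i h(X_t)\rangle_\HH$, and then the chain rule for $D$, noting that $DG^{(j)}=0$ because $G$ lies outside the underlying isonormal process, which gives $D\partial_i h(X_t) = \sqrt{1-t}\sum_j \partial_{ij}^2 h(X_t)\,DF^{(j)}$; the resulting $\sqrt{1-t}$ cancels $1/(2\sqrt{1-t})$. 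Collecting both pieces, and relabeling $i\leftrightarrow j$ using the symmetry of $C$ and of $\partial_{ij}^2 h$, yields
\[
	\Psi'(t) = \tfrac{1}{2}\sum_{i,j=1}^m \E\!\left[\partial_{ij}^2 h(X_t)\bigl(C_{i,j} - \langle DF^{(i)}, v^{(j)}\rangle_\HH\bigr)\right].
\]
Bounding $|\partial_{ij}^2 h|\le\|h''\|_\infty$ and applying Cauchy-Schwarz in the double sum followed by Cauchy-Schwarz in $L^2(\Omega)$ yields the right-hand side appearing in the proposition; integration in $t$ over $[0,1]$ completes the argument.

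The main technical point is at the endpoints $t=0,1$, where the prefactors $1/(2\sqrt t)$ and $1/(2\sqrt{1-t})$ are singular. As the computation shows, the two integration-by-parts manipulations extract exactly the matching compensating factors $\sqrt t$ and $\sqrt{1-t}$, so the final formula for $\Psi'(t)$ is bounded uniformly in $t\in[0,1]$. To make the calculation rigorous I would first restrict $t$ to $[\varepsilon,1-\varepsilon]$ and carry out the Stein and duality steps there (this is legitimate because $h\in C^2$ has bounded second derivatives, $F^{(i)}\in\mathbb{D}^{1,2}$, and $v^{(i)}\in{\rm Dom}\,[\delta]$), then pass to the limit $\varepsilon\downarrow 0$ by dominated convergence on the resulting bounded integrand. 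This is essentially the same strategy used to prove the univariate version of the inequality in \cite{NP}, and it transfers almost verbatim once the multivariate identities above are in hand.
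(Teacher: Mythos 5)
The paper itself does not prove this proposition---it is imported as a ``generalization of \cite[Theorem 6.1.2]{NP}''---so there is no in-paper argument to compare against; your interpolation (``smart path'') proof is exactly the canonical route for such statements, and almost all of it is sound. In particular, the identity
\[
	\Psi'(t)=\tfrac12\sum_{i,j=1}^m
	\E\left[\partial^2_{ij}h(X_t)\bigl(C_{i,j}-\langle DF^{(i)},v^{(j)}\rangle_{\HH}\bigr)\right]
\]
is correctly obtained: Gaussian integration by parts conditionally on $F$ handles the $G$-term, and the duality $\E[\delta(v^{(i)})\Phi]=\E\langle v^{(i)},D\Phi\rangle_{\HH}$ together with the chain rule handles the $F$-term (legitimate here because $\partial_i h$ is Lipschitz, $F^{(j)}\in\mathbb{D}^{1,2}$ and $v^{(i)}\in{\rm Dom}\,[\delta]$, working on the product space where $G$ is independent of the underlying noise); your treatment of the endpoint singularities is also fine.

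The gap is in the final estimate. From the display above, bounding $|\partial^2_{ij}h|\le\|h''\|_\infty$ and then applying Cauchy--Schwarz over the $m^2$ pairs $(i,j)$ and in $L^2(\Omega)$ yields
$|\Psi'(t)|\le\tfrac m2\,\|h''\|_\infty\sqrt{\sum_{i,j}\E\bigl|C_{i,j}-\langle DF^{(i)},v^{(j)}\rangle_{\HH}\bigr|^2}$:
the Cauchy--Schwarz step over the double sum costs a factor $\sqrt{m^2}=m$, which your write-up silently drops. This is not merely a lost constant---with $\|h''\|_\infty$ defined entrywise as in the statement, the bound with the prefactor $\tfrac12$ is false for $m\ge2$. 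Indeed, take $v^{(i)}=h_0$ a fixed unit vector of $\HH$ for every $i$, so that $F^{(i)}=W(h_0)$ and $\langle DF^{(i)},v^{(j)}\rangle_{\HH}=1$; take $C=I_m$ and $h(x)=\tfrac12(x_1+\cdots+x_m)^2$, so $\|h''\|_\infty=1$. Then $|\E h(F)-\E h(G)|=(m^2-m)/2$, while the stated right-hand side equals $\tfrac12\sqrt{m^2-m}$, which is smaller whenever $m\ge2$. So no argument can close your last step as written: either keep the factor $m/2$ (which is what your proof actually establishes, and is all that is needed in the proof of Theorem \ref{th:FCLT}, where $m$ is fixed and constants may depend on it), or replace $\|h''\|_\infty$ by $\sup_{x}\bigl(\sum_{i,j}|\partial^2_{ij}h(x)|^2\bigr)^{1/2}$, the sup of the Hilbert--Schmidt norm of the Hessian, in which case the prefactor $\tfrac12$ is recovered.
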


\subsection{On the ergodic theorem (\ref{erg:thm})}

Recall the definition \eqref{average} of $\mathcal{S}_{N,t}$ and observe that
the ergodic theorem (\ref{erg:thm}) can be recast in terms of the average
integral $\mathcal{S}_{N,t}$ as follows:
\[
	\lim_{N\to\infty}\mathcal{S}_{N,t}=0\quad\text{a.s.\ and in $L^1(\Omega)$}.
\]
The following lemma proves that the ergodic theorem \eqref{erg:thm} holds in
$L^k(\Omega)$ for every $k\ge2$, hence also in $L^k(\Omega)$ for every $k\ge1$.
It also yields a quantitative upper bound of $O(\sqrt{\log(N)/N})$ on the rate
of convergence in $L^k(\Omega)$ for every $k\ge1$, with a constant that
describes also the behavior  of the limit uniformly in $t$ when $t\ll1$.
Perhaps not surprisingly, the mentioned rate of convergence coincides with the
rate of convergence to normality that was ensured by Theorem \ref{TVD}.

\begin{lemma}\label{lem:||S||}
	For all real numbers $k\ge2$ and $T>0$ there exists a number $A_{k,T}>0$
	such that
	\[
		\sup_{N\ge\e}\left\| \sqrt{\frac{N}{\log N}}\,\mathcal{S}_{N,t} \right\|_k \le
		A_{k,T}\sqrt{t\log_+(1/t)}
		\quad\text{uniformly for all $t\in(0\,,T)$},
	\]
	where $\log_+(w):=\log(\e+w)$ for all $w\ge0$.
\end{lemma}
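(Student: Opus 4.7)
My plan is to express $\mathcal{S}_{N,t}$ as a Walsh stochastic integral via the evolution equation \eqref{E:U}, apply the Burkholder--Davis--Gundy (BDG) inequality in $L^k(\Omega)$, and then reduce the problem to a purely deterministic kernel estimate. A stochastic Fubini argument first lets me pull the spatial average through the stochastic integral to obtain
\[
	\mathcal{S}_{N,t}=\int_{(0,t)\times\R}U(s\,,y)\,g_{N,t}(s\,,y)\,\eta(\d s\,\d y),
	\qquad g_{N,t}(s\,,y):=\frac{1}{N}\int_0^N\bm{p}_{s(t-s)/t}\!\left(y-\tfrac{s}{t}x\right)\d x.
\]
The hypotheses of stochastic Fubini are routine to verify since $g_{N,t}\in L^2(\d s\,\d y)$ and $\|U(s\,,y)\|_2$ is locally bounded.

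Since $U(s\,,y)$ is $\mathcal{F}_s$-measurable and $g_{N,t}$ is deterministic, BDG together with Minkowski's inequality yields, for every $k\ge 2$,
\[
	\|\mathcal{S}_{N,t}\|_k^2\le 4k\int_0^t\d s\int_{-\infty}^\infty\d y\ \|U(s\,,y)\|_k^2\,g_{N,t}(s\,,y)^2\le 4k\,c_{T,k}^2\,I_{N,t},
\]
where the second inequality uses $\|U(s\,,y)\|_k=\|u(s\,,y)\|_k/\bm{p}_s(y)\le c_{T,k}$ (from \eqref{||u(s,y)||}) and $I_{N,t}:=\int_0^t\int_\R g_{N,t}(s\,,y)^2\,\d y\,\d s$. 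The lemma will follow once I show $I_{N,t}\le C_T\,t\log_+(1/t)\log N/N$.

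To bound $I_{N,t}$, I expand the square, integrate in $y$ using the identity $\int\bm{p}_{\sigma^2}(y-a)\bm{p}_{\sigma^2}(y-b)\,\d y=\bm{p}_{2\sigma^2}(a-b)$, and change variables to reduce matters to
\[
	I_{N,t}\le\frac{2t}{N}\int_0^t\frac{\d s}{s}\int_0^{sN/t}\bm{p}_{2s(t-s)/t}(w)\,\d w.
\]
The inner integral is at most $\min\!\bigl(\tfrac12,\,N\sqrt{s/(4\pi t(t-s))}\bigr)$. Splitting the $s$-integral at the transition point $s_\ast\asymp t^2/N^2$ where these two bounds cross, the small-$s$ piece contributes $O(1)$ while the bulk piece $s_\ast<s<t$ contributes $\tfrac12\log(t/s_\ast)\le\log N+\tfrac12\log_+(1/t)+O(1)$. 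Since $\log N\ge 1$ for $N\ge\mathrm{e}$ and $\log_+(1/t)\ge 1$, the elementary inequality $a+b\le 2ab$ (valid for $a,b\ge 1$) gives a bound of $C\log N\log_+(1/t)$. Taking square roots finishes the proof.

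The main technical obstacle is precisely this sharp splitting of the $s$-integral, which prevents the naive divergence of $\int_0^t\d s/s$ coming from the delta initial profile. The resulting single factor of $\log N$ is exactly the logarithmic correction that shows up in the asymptotic variance \eqref{AVar}, reflecting the singular initial condition—a phenomenon absent in the flat initial-data setting of \cite{HNV2018}.
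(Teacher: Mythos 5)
Your argument is correct, and its probabilistic skeleton coincides with the paper's: stochastic Fubini to write $\mathcal{S}_{N,t}$ as a Walsh integral of $U(s,y)g_{N,t}(s,y)$, BDG plus Minkowski with the constant $c_k=4k$, the moment bound \eqref{||u(s,y)||} to remove $\|U(s,y)\|_k$, and the semigroup property to reduce everything to the deterministic integral $N^{-2}\int_0^t\d s\int_0^N\int_0^N \bm{p}_{2s(t-s)/t}\bigl(\tfrac st(x-x')\bigr)\,\d x\,\d x'$. Where you diverge is in how this integral is estimated. The paper passes to Fourier variables via Lemma \ref{lem7.2} (Plancherel), which converts the $\d s/s$ integral into the function $G_{N,t}$ of \eqref{G_{N,t}} and then invokes the Appendix bound $G_{N,t}(x)\le 7t\log_+(1/t)\log_+(1/|x|)$ of Lemma \ref{G_N:limit} together with integrability of $\varphi(x)\log_+(1/|x|)$. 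You instead stay in real space: bound the inner spatial integral by $\min\bigl(\tfrac12,\,N\sqrt{s/(4\pi t(t-s))}\bigr)$ and split the $\d s/s$ integral at the crossover scale $s_\ast\asymp_T t^2/N^2$, the small-$s$ piece giving $O_T(1)$ and the bulk giving $\log N+\tfrac12\log_+(1/t)+O_T(1)$; combined with $\log N\ge1$ and $\log_+(1/t)\ge1$ this yields the same $t\log_+(1/t)\log N/N$ bound. Your route is more elementary and makes transparent that the factor $\log N$ comes precisely from the range $t^2/N^2\lesssim s\lesssim t$ of the singular $\d s/s$ integral caused by the delta initial datum; the paper's route has the advantage that the function $G_{N,t}$ and Lemma \ref{G_N:limit} are reused repeatedly elsewhere (Proposition \ref{pr:Cov:asymp}, the tightness estimates, Proposition \ref{pr:local}), and the Fourier representation also delivers the exact limit $G_{N,t}\to 2t$ needed for the variance asymptotics \eqref{AVar}, which a pure upper-bound splitting does not provide. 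Do note that your $O(1)$ constants (e.g.\ in the small-$s$ piece, where you need $t-s\gtrsim_T t$ for $s\le s_\ast$) depend on $T$, which is harmless here since $A_{k,T}$ may depend on $T$.
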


\begin{proof}
	Choose and fix a real number $k\ge2$.
	By the BDG inequality and \eqref{PPPP},
	\begin{align*}
		\|\mathcal{S}_{N,t}\|_k^2 
			&=\frac{1}{N^2}\left\| \int_{(0,t)\times\R}
			U(s\,,y) \left[\int_0^N\bm{p}_{s(t-s)/t}\left(y- \frac{s}{t}x \right)\d x\right]
			\eta(\d s\, \d y)\right\|_k^2\\
		&\le\frac{   c_k}{N^2}\int_0^t\d s\int_{-\infty}^\infty\d y\
			\|U(s\,,y)\|_k^2\left[\int_0^N\bm{p}_{s(t-s)/t}\left(y- \frac{s}{t}x \right)\d x\right]^2,
	\end{align*}
	uniformly for all $N,t>0$. Apply \eqref{||u(s,y)||} to see that
	\[
		\|\mathcal{S}_{N,t}\|_k^2
		\le\frac{  c_k\: c_{T,k}^2}{N^2}\int_0^t\d s\int_{-\infty}^\infty\d y\
		\left[\int_0^N\bm{p}_{s(t-s)/t}\left(y- \frac{s}{t}x \right)\d x\right]^2,
	\]
	uniformly for all $N>0$ and $t\in(0\,,T)$. Now expand the square and appeal to the
	semigroup property of the heat kernel in order to find that, for every $N,t>0$,
	\begin{align*}
		\int_{-\infty}^\infty\d y\
			\left[\int_0^N\bm{p}_{s(t-s)/t}\left(y- \frac{s}{t}x \right)\d x\right]^2
    & =\int_0^N\d y\int_0^N\d z\ \bm{p}_{2s(t-s)/t}\left( \frac st(y-z)\right)              \\
    & = \left( \frac ts\right)^2\int_0^{Ns/t}\d a\int_0^{Ns/t}\d b\ \bm{p}_{2s(t-s)/t}(a-b) \\
    & = \frac{Nt}{\pi s} \int_{-\infty}^\infty\left(\frac{1-\cos z}{z^2}\right) \exp\left( -\frac{t(t-s)z^2}{N^2s} \right)\d z;
	\end{align*}
	see Lemma \ref{lem7.2} of the Appendix. Consequently, if
	$N>0$ and  $t\in(0\,,T)$, then
	\begin{align*}
		\|\mathcal{S}_{N,t}\|_k^2 &\le\frac{t c_kc_{k,T}^2}{\pi N}\int_{-\infty}^\infty\d x\:
			\left(\frac{1-\cos x}{x^2}\right)
			\int_0^t\frac{\d s}{s}
			\exp\left( -\frac{t(t-s)x^2}{N^2s} \right)\\
		&=\frac{ c_kc_{k,T}^2\log N}{\pi N}\int_{-\infty}^\infty
			\left(\frac{1-\cos x}{x^2}\right)G_{N,t}(x)\,\d x,
	\end{align*}
	where $G_{N,t}$ is defined in \eqref{G_{N,t}} below, in the Appendix. We may
	appeal to Lemma \ref{G_N:limit} of the Appendix to conclude the result.
\end{proof}

\section{Proof of Theorem \ref{ergodicity}}\label{sec:ergodicity}
Since weak mixing implies ergodicity, it suffices to prove that $U(t)$ is
weakly mixing for every $t>0$. We follow the proof of \cite[Corollary
9.1]{CKNP} in order to reduce the proof of Theorem \ref{ergodicity} to the
verification of the following:
\begin{equation}\label{enough}
	\lim_{|x|\to\infty}\Cov\left[ \mathcal{G}(x)\,,\mathcal{G}(0)\right]=0,
\end{equation}
where  the functions $g_1,\ldots,g_k\in C^1_b(\R)$ satisfy
$g_j(0)=0$ and $\text{\rm Lip}(g_j)=1$ for every $j=1,\ldots,k$,
\[
	\mathcal{G}(x) :=
	\prod_{j=1}^k g_j ( U(t\,,x+\zeta^j))\qquad
	\text{for all $x\in\R$},
\]
and $\zeta^1,\ldots,\zeta^k$ are fixed real numbers.
Thus, it suffices to prove \eqref{enough}.

By the  chain rule
for the Malliavin derivative \cite[Proposition 1.2.4]{Nualart},
\[
	D_{s,z} \mathcal{G}(x) = \sum_{j_0=1}^k
	\left(\prod_{\substack{j=1\\j \not= j_0}}^{k}
	g_j \left( U(t\,,x+\zeta^j) \right) \right)
	g_{j_0}' \left(U(t\,,x+\zeta^{j_0})\right)
	D_{s,z} U(t\,, x+ \zeta^{j_0}).
\]
Therefore, the definition of the process $U$ in \eqref{U}, \eqref{||u(s,y)||},
and Lemma \ref{derivative:estimate} together imply the existence of a real number $c=c(T,k)$ such that
\begin{align*}
	\| D_{s,z} \mathcal{G}(x) \|_2
		&\le  \sum_{j_0=1}^k  \left(\prod_{j=1, j \not= j_0}^{k}
		\|  g_j ( U(t\,,x+\zeta^j))  \|_{2k} \right)  \| D_{s,z} U(t\,, x+ \zeta^{j_0})\|_{2k} \\
	&\le c\sum_{j=1}^k  \frac{\bm{p}_{t-s} (x+ \zeta^{j} -z)
		\bm{p}_{s}(z)}{\bm{p}_t(x + \zeta^j)} = c\sum_{j=1}^k
		\bm{p}_{s(t - s)/t}\left(z - \frac{s}{t}(x + \zeta^j)\right),
\end{align*}
uniformly for all $0<s<t\le T$ and $x,z\in\R$;
the equality holds due to \eqref{PPPP}.  Now apply the Poincar\'e inequality \eqref{Poincare:Cov}
and the semigroup property of the heat kernel to see that
\[
	\left| {\rm Cov} \left[  \mathcal{G}(x)\,, \mathcal{G}(0)\right] \right|
	\le c^2\sum_{j,\ell = 1}^{k}\int_0^t
	\bm{p}_{2s(t - s)/t}\left(\frac{s}{t}(x  + \zeta^j - \zeta^\ell)\right)
	\d s.
\]
This implies \eqref{enough}, thanks to the
dominated convergence theorem, and concludes the proof.\qed

\section{Asymptotic behavior of the covariance}
\label{sec:Asym}

Recall from \eqref{average} that
\[
	\mathcal{S}_{N,t}= \frac 1N \int_0^N [ U(t\,,x) -1] \,\d x,
\]
where $U(t\,,x)$ was defined in \eqref{U}.  The following proposition provides
the asymptotic behavior of the covariance function of the renormalized sequence
of processes  $\mathcal{S}_{N,t}$ as $N$ tends to infinity.
\begin{proposition}\label{pr:Cov:asymp}
	For every $t_1,t_2>0$,
	\[
		\lim_{N\to\infty}
		\Cov\left[  \sqrt{\frac{N}{\log N}}\, \mathcal{S}_{N,t_1} ~,~
		\sqrt{\frac{N}{\log N}}\, \mathcal{S}_{N,t_2}\right] =
		2(t_1 \wedge t_2).
	\]
\end{proposition}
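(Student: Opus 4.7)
My plan is to use It\^o's isometry to express the covariance as a deterministic integral, then exploit the spatial stationarity of $U(s,\cdot)$ to remove the $y$-dependence of the weight, and finally carry out a one-dimensional asymptotic analysis in $s$. From \eqref{E:U},
\[
N\mathcal{S}_{N,t} = \int_{(0,t)\times\R} U(s\,,y)\,G_{N,s}^{t}(y)\,\eta(\d s\,\d y),\qquad G_{N,s}^{t}(y) := \int_0^N \bm{p}_{s(t-s)/t}\!\left(y-\tfrac{s}{t}x\right)\d x,
\]
and since $U(s\,,\cdot)$ is $\mathcal{F}_s$-measurable, Wiener's isometry yields
\[
N^2\,\Cov(\mathcal{S}_{N,t_1}\,,\mathcal{S}_{N,t_2}) = \int_0^{t_1\wedge t_2}\d s\int_\R\d y\, \E\!\left[U(s\,,y)^2\right]G_{N,s}^{t_1}(y)\,G_{N,s}^{t_2}(y).
\]
The key observation is that the spatial stationarity of $U(s\,,\cdot)$ \cite{ACQ11} forces $\sigma^2(s) := \E[U(s\,,y)^2]$ to be independent of $y$; moreover $\sigma^2$ is uniformly bounded on $(0\,,T]$ by \eqref{||u(s,y)||} while $\sigma^2(s)\to 1$ as $s\downarrow 0$ by Lemma \ref{lem:||U-1||}.

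With $\sigma^2(s)$ pulled out of the $\d y$-integral, Fubini and the semigroup identity $\int\bm{p}_\alpha(y-a)\bm{p}_\beta(y-b)\,\d y=\bm{p}_{\alpha+\beta}(a-b)$, followed by the rescaling $u_j:=sx_j/t_j$, give
\[
\int_\R G_{N,s}^{t_1}(y)\,G_{N,s}^{t_2}(y)\,\d y = \frac{t_1t_2}{s^2}\int_0^{Ns/t_1}\!\!\int_0^{Ns/t_2}\bm{p}_{\tau_1(s)+\tau_2(s)}(u-v)\,\d u\,\d v,
\]
where $\tau_j(s):=s(t_j-s)/t_j$. The inner double integral is the integral of $\bm{p}_{\tau_1+\tau_2}$ against a trapezoidal function of height $Ns/(t_1\vee t_2)$; since the Gaussian has standard deviation of order $\sqrt{s}$, a direct computation (or a Plancherel calculation in the spirit of Lemma \ref{lem:||S||}) shows that it equals $Ns/(t_1\vee t_2) + O(\sqrt{s})$ as soon as $Ns/(t_1\vee t_2)\gg\sqrt{s}$, and is otherwise bounded by $CN^2 s^{3/2}/(t_1 t_2)$ via the trivial estimate $\bm{p}_{\tau_1+\tau_2}(\cdot)\le(2\pi(\tau_1+\tau_2))^{-1/2}\lesssim 1/\sqrt{s}$.

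Finally I would split the $s$-integral at $s_\ast:=(t_1\vee t_2)^2/N^2$. On $(s_\ast\,,t_1\wedge t_2)$ the preceding step gives $\int G\,G\,\d y = N(t_1\wedge t_2)/s + O(s^{-3/2})$, whose leading term contributes
\[
N(t_1\wedge t_2)\int_{s_\ast}^{t_1\wedge t_2}\frac{\sigma^2(s)}{s}\,\d s = 2N(t_1\wedge t_2)\log N\,(1+o(1))
\]
after using $\sigma^2(s)\to 1$ together with $\log((t_1\wedge t_2)/s_\ast)=2\log N + O(1)$; the error term $O(s^{-3/2})$ integrates to $O(N)=o(N\log N)$. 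On $(0\,,s_\ast)$ the bound $\int G\,G\,\d y\le CN^2/\sqrt{s}$ contributes at most $CN^2\int_0^{s_\ast}s^{-1/2}\,\d s=O(N)$, which is again negligible. Dividing by $N\log N$ yields the claimed limit $2(t_1\wedge t_2)$. The \textbf{main technical point} is securing the $O(\sqrt{s})$ error in the double integral uniformly in $s$ near the cutoff $s_\ast$, where the Gaussian width becomes comparable to the edge of the trapezoid; this is what forces the two-regime split and the separate, cruder treatment for $s<s_\ast$. A secondary delicate step is the use of $\sigma^2(s)\to 1$ to replace $\sigma^2$ by $1$ inside the $\log$-integral without losing precision, which is a routine dominated-convergence argument.
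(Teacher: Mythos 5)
Your argument is correct, and it reaches the limit $2(t_1\wedge t_2)$ by a route that differs from the paper's in its technical execution, though both start from the same It\^o--Walsh isometry reduction of the covariance to a deterministic integral in which the $\log N$ arises from the $\int \d s/s$ singularity near $s=0$. The paper inserts the exact Chen--Dalang second-moment formula $\E[|u(s,z)|^2]=\bm{p}_s^2(z)(1+\theta(s))$ and then passes to Fourier space: after changes of variables and Parseval, the covariance is split into three terms $A_N^{(1)}-A_N^{(2)}+A_N^{(3)}$, the main one handled by the Appendix function $G_{N,\tau}$ (Lemma \ref{G_N:limit}) and dominated convergence, the others shown to be $O(1/\log N)$. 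You instead keep everything in real space: you use only the soft facts that $\sigma^2(s):=\E[U(s,y)^2]$ is $y$-independent (stationarity), bounded on $(0,T]$ by \eqref{||u(s,y)||}, and tends to $1$ as $s\downarrow0$ --- and indeed Lemma \ref{lem:||U-1||} even gives the quantitative bound $|\sigma^2(s)-1|\le C_T^2\sqrt{s}$, which makes the replacement of $\sigma^2$ by $1$ an $O(1)$ correction rather than merely a dominated-convergence step --- together with a trapezoid-versus-Gaussian estimate and a two-regime split at $s_\ast=(t_1\vee t_2)^2/N^2$. Your ``main technical point'' is handled correctly, and in fact the $O(\sqrt{s})$ error in the plateau approximation holds uniformly in $s$ (the defect of the trapezoid below its plateau height is dominated by $|w|$ near each edge, so the error is at most $2\int_0^\infty w\,\bm{p}_{\tau_1+\tau_2}(w)\,\d w\le\sqrt{2s}$), so the caveat $Ns/(t_1\vee t_2)\gg\sqrt{s}$ is not needed for the expansion itself --- the split is only needed so that the error terms integrate to $O(N)=o(N\log N)$, exactly as you argue. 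What each approach buys: yours is more elementary and robust, avoiding the exact moment formula \eqref{squareformula} and the Fourier-analytic bookkeeping; the paper's version reuses the $G_{N,t}$ machinery of the Appendix, which also serves in Lemma \ref{lem:||S||}, the tightness estimates, and Section \ref{Sec:no:FTV}, so it keeps the whole paper on a single set of estimates.
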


\begin{proof}
	First, let us recall from Chen and Dalang \cite[(2.31)]{CD15}
	that, for all $s >0$ and $z \in \R$,
	\begin{align}\label{squareformula}
		\E\left( |u(s\,, z)|^2\right) = \bm{p}^2_s(z)(1 + \theta(s)),
	\end{align}
	where
	\begin{align}\label{theta}
		\theta(s) := \e^{s/4}\sqrt{s/2}
		\int_{-\infty}^{\sqrt{s/2}}\e^{- y^2/2}\,\d y
		\qquad\text{for all $s>0$}.
	\end{align}
	By \eqref{mild}, the It\^{o}-Walsh isometry, and \eqref{squareformula},
	\begin{align*}
		\Cov\left[ U(t_1\,,x) ~,~ U(t_2\,,y)\right] &=
			\frac{1}{\bm{p}_{t_1}(x)\bm{p}_{t_2} (y)}
			\int_0^{t_1\wedge t_2} \d s\int_{-\infty}^\infty \d z\
			\bm{p}_{t_1 - s}(x - z)\bm{p}_{t_2 - s}(y - z)\E\left( |u(s\,, z)|^2\right) \\
		& =\frac{1}{\bm{p}_{t_1}(x)\bm{p}_{t_2}(y)}
			\int_0^{t_1\wedge t_2}\d s
			\int_{-\infty}^\infty \d z\
			\bm{p}_{t_1 - s}(x - z)\bm{p}_{t_2 - s}(y - z)\bm{p}^2_s(z)(1 + \theta(s)) \\
		& = \int_0^{t_1\wedge t_2} \d s
			\int_{-\infty}^\infty \d z\
			\bm{p}_{s(t_1 - s)/t_1}\left(z - \frac{s}{t_1}x\right)
			\bm{p}_{s(t_2 - s)/t_2}\left(z - \frac{s}{t_2}y\right)(1 + \theta(s)) \\
		& = \int_0^{t_1\wedge t_2}
			\bm{p}_{s[(t_1 - s)/t_1+(t_2-s)/t_2]}
			\left(s\left[\frac{x}{t_1}- \frac y{t_2}\right]\right)(1 + \theta(s))\,\d s \\
		& =: \int_0^{t_1\wedge t_2} \mathcal{P}_{s,t_1,t_2}(x,y)(1 + \theta(s))\,\d s,
	\end{align*}
	notation being clear from context.
	Let $\tau := 2t_1t_2/(t_1+t_2)$, so that we can write
	\[
		\mathcal{P}_{s,t_1, t_2}(x\,,y)= \mathcal{P}_{s,\tau}\left( \frac {2(xt_2-yt_1)}{t_1+t_2} \right)
		\quad\text{for}\quad
		\mathcal{P}_{s, t}(w)= \bm{p}_{2s(t-s)/t}\left(\frac{sw}{t}\right).
	\]
	If $t_1 < t_2$, then
	\begin{align*}
		&\Cov\left[\sqrt{\frac{N}{\log N}}\, \mathcal{S}_{N,t_1}
			~,~\sqrt{\frac{N}{\log N}}\,\mathcal{S}_{N,t_2} \right]
			= \frac{1}{ N\log N}\int_0^N\d y\int_0^N \d x\
			\Cov\left[ U(t_1\,,x) \,, U(t_2\,,y)\right]  \\
		&\hskip1in = \frac{1 }{ N\log N}\int_0^{t_1}\d s \, (1 + \theta(s))
			\int_0^{N}\d y \int_0^{N}\d x\
			\mathcal{P}_{s,\tau}\left( \frac {2(xt_2-yt_1)}{t_1+t_2} \right).
	\end{align*}
 	In order to simplify the exposition define
	\[
		\tau_1:=\frac {2t_2}{ t_1+t_2}
		\quad\text{and}\quad
		\tau_2=\frac {2t_1}{ t_1+t_2}.
	\]
	We then change  variables [$x \to x/ \tau_1$ and $y\to y/\tau_2$] to obtain
	\begin{align*}
		&\Cov\left[\sqrt{\frac{N}{\log N}}\, \mathcal{S}_{N,t_1}
			~,~ \sqrt{\frac{N}{\log N}}\,\mathcal{S}_{N,t_2} \right]  \\
		&\quad = \frac{1 }{\tau_1 \tau_2 N \log N}\int_0^{t_1}(1 + \theta(s))\,\d s
			\int_0^{N\tau_1}\d x \int_0^{N\tau_2}\d y\    \mathcal{P}_{s,\tau}(x-y)\\
		&\quad = \frac{ \tau }{\tau_1 \tau_2 N \log N}
			\int_0^{t_1}\left(\frac{1+\theta(s)}{s}\right)\d s
			\int_0^{N\tau_1}\d x \int_0^{N\tau_2}\d y\
			\bm{p}_{2\tau( \tau-s)/s} (x-y),
	\end{align*}
	where in the last equality we have used the scaling property,
	\begin{equation}\label{p-scale}
		\bm{p}_\sigma(\alpha w) = \alpha^{-1} \bm{p}_{\sigma/\alpha^2}(w),\qquad\text{%
		valid for all
		$\sigma,\alpha>0$ and $w\in\R$}.
	\end{equation}
	Since $\widehat{ \mathbf{1}_{[0,a]} } (\xi) =a \widehat{ \mathbf{1}_{[0,1]}
	}(a\xi)$ for all $a>0$ and $\xi\in\R$, Parseval's identity ensures that
	\begin{align*}
		\Cov&\left[\sqrt{\frac{N}{\log N}}\, \mathcal{S}_{N,t_1}
			~,~\sqrt{\frac{N}{\log N}}\,\mathcal{S}_{N,t_2} \right]  \\
				& = \frac{\tau}{2\pi  \tau_1 \tau_2 \log N}\int_0^{t_1}
			\left(\frac{1+\theta(s)}{s}\right)\d s
			\int_{-\infty}^\infty\widehat{\mathbf{1}_{[0,\tau_1]}}(w)
			\overline{\widehat{\mathbf{1}_{[0,\tau_2]}}(w)}
			\exp\left( -\frac{(\tau - s)\tau}{s}\frac{w^2}{N^2}\right)\d w  \\
		& = \frac{1}{2\pi \tau_1\tau_2}\int_{-\infty}^\infty
			\widehat{\mathbf{1}_{[0,\tau_1]}}(w)
			\overline{\widehat{\mathbf{1}_{[0,\tau_2]}}(w)}
			G_{N, \tau}(w)\, \d w  \\ \notag
		& \qquad - \frac{\tau}{2\pi  \tau_1 \tau_2 \log N}
			\int_{t_1}^{\tau}\frac{\d s}{s}\int_{-\infty}^\infty
			\widehat{\mathbf{1}_{[0,\tau_1]}}(w)
			\overline{\widehat{\mathbf{1}_{[0,\tau_2]}}(w)}
			\exp\left(-\frac{(\tau - s)\tau}{s}\frac{w^2}{N^2}\right)\d w  \\
		& \qquad + \frac{\tau}{2\pi \tau_1\tau_2\log N}
			\int_0^{t_1}\frac{\theta(s)}{s}\,\d s\int_{-\infty}^\infty
			\widehat{\mathbf{1}_{[0,\tau_1]}}(w)
			{\widehat{\mathbf{1}_{[0,\tau_2]}}(w)}
			\exp\left(-\frac{(\tau - s)\tau}{s}\frac{w^2}{N^2}\right)\d w   \\
		& =: A_{N}^{(1)} - A_{N}^{(2)} +A_{N}^{(3)},
	\end{align*}
	where  the function $G_{N,\tau}$  is defined in \eqref{G_{N,t}} below, in the Appendix.
	We plan to prove that
	\begin{equation}\label{AAA}
		\lim_{N\to\infty}A^{(1)}_N= 2t_1
		\quad\text{and}\quad
		\lim_{N\to\infty}A^{(2)}_N=\lim_{N\to\infty}A^{(3)}_N=0.
	\end{equation}
	These facts together conclude the proof of the proposition.

	In order to understand the behavior of $A^{(1)}_N$ we first apply Lemma
	\ref{G_N:limit} and the dominated convergence theorem, and then the Parseval
	identity, in order to verify the first of the three assertions in
	\eqref{AAA}:
	\[
		\lim_{N\to\infty}A_{N}^{(1)}
		= \frac{2\tau}{2\pi\tau_1\tau_2}\int_{-\infty}^\infty
		\widehat{\mathbf{1}_{[0,\tau_1]}}(w)
		\overline{\widehat{\mathbf{1}_{[0,\tau_2]}}(w)}\, \d w
		= \frac{2\tau} {\tau_1\tau_2}
		\left\< \mathbf{1}_{[0,\tau_1]} ~,~ \mathbf{1}_{[0,\tau_2]} \right\>_{L^2(\R)} =2t_1.
	\]

	We study $A_N^{(2)}$ by making a change of variables $[s\to \tau/(s+1)]$ to find that
	\begin{align*}
		A_N^{(2)} &=  \frac{\tau}{2\pi  \tau_1 \tau_1\log N}
			\int_{0}^{(t_2-t_1)/(t_2+t_1)}\frac{\d s}{1+s}
			\int_{-\infty}^\infty\widehat{\mathbf{1}_{[0,\tau_1]}}(w)
			\overline{\widehat{\mathbf{1}_{[0,\tau_2]}}(w)}
			\exp\left( - \frac{\tau sw^2}{N^2}\right)\d w.
	\end{align*}
	Since $\exp(-\tau s w^2/N^2)\le 1$, this proves that $A_N^{(2)} = O(1/\log
	N)\to0$ as $N\to\infty$. Therefore, it remains to prove the third assertion
	in \eqref{AAA} about $A^{(3)}_N$.  For that, we change  variables $[s\to \tau
	s]$ to obtain
	\begin{align*}
		\left|A_{N}^{(3)}\right|&\leq \frac{\tau}{2\pi \tau_1 \tau_2 \log N}
			\int_{-\infty}^\infty
			\left| \widehat{\mathbf{1}_{[0,\tau_1]}}(w)
			\overline{\widehat{\mathbf{1}_{[0,\tau_2]}}(w)}\right|\d w
			\int_0^1\frac{\theta(\tau s)}{s}
			\exp\left( -\frac{(1 - s)\tau}{s}\frac{w^2}{N^2}\right)\d s\\
		& = \frac{\tau}{2\pi \tau_1 \tau_2 \log N}
			\int_{-\infty}^\infty\left|\widehat{\mathbf{1}_{[0,\tau_1]}}(w)
			\overline{\widehat{\mathbf{1}_{[0,\tau_2]}}(w)}\right|  \d w
			\int_0^{\infty}\frac{\theta(\tau/(r + 1))}{r + 1}
			\exp\left( -\frac{w^2\tau r}{N^2} \right) \d r.
	\end{align*}
	By the definition of the function $\theta$ in \eqref{theta},
	\begin{align*}
		\theta\left( \frac{\tau}{r + 1}\right)
		\exp\left( -\frac{w^2\tau r}{N^2} \right)
		<\theta\left( \frac{\tau}{r + 1}\right)\le \e^{\tau/4}\sqrt{\frac{\tau\pi}{r + 1}}
		\quad\text{for all $r>0$}.
	\end{align*}
	Hence,
	\[
		\left|A_{N}^{(3)}\right| \leq
		\frac{\e^{\tau/4}t\sqrt{\tau\pi}}{2\pi \tau_1 \tau_2 \log N}
		\int_{-\infty}^\infty  \left|\widehat{\mathbf{1}_{[0,\tau_1]}}(w)
		\overline{\widehat{\mathbf{1}_{[0,\tau_2]}}(w)}\right|\d w \times
		\int_0^{\infty}\frac{\d r}{(r+1)^{3/2}}\to 0,
	\]
	as $N\to\infty$. This concludes the proof of \eqref{AAA} and hence the  proof of the proposition.
\end{proof}

\section{Proof of Theorem \ref{TVD}}\label{Sec:TVD}

For all $N,t,s>0$ and $y\in\R$ define
\begin{equation}\label{g+v}
	g_{N,t}(s\,,y) := \bm{1}_{(0,t)}(s)\frac1N\int_0^N\bm{p}_{s(t-s)/t}\left( y - \frac st x\right)\d x
	\quad\text{and}\quad
	v_{N,t}(s\,,y) := g_{N,t}(s\,,y)U(s\,,y).
\end{equation}
Because of \eqref{average} and a stochastic Fubini argument,
\begin{equation}\label{S=delta(v)}
	\mathcal{S}_{N,t} = \int_{\R_+\times\R} v_{N,t}(s\,,y)\,\eta(\d s\,\d y)
	=\delta(v_{N,t})\qquad\text{a.s.,}
\end{equation}
owing to the fact that $v_{N,t}$ is an adapted random field and hence its
stochastic integral agrees with its divergence  (see Nualart \cite[Chapter
1.3.3]{Nualart}).  Our work so far shows that $\mathcal{S}_{N,t}$ is Malliavin
differentiable, and that the following defines a version of the Malliavin
derivative of $\mathcal{S}_{N,t}$:
\begin{equation}\label{DS}
	D_{r,z}\mathcal{S}_{N,t} = \bm{1}_{(0,t)}(r) v_{N,t}(r\,,z) +
	\bm{1}_{(0,t)}(r) \int_{(r,t)\times\R} D_{r,z}v_{N,t}(s\,,y)\,\eta(\d s\,\d y).
\end{equation}
The key technical result of this section is the following proposition:

\begin{proposition}\label{Var<Ds,v>}
	For every $T>0$ there exists a real number $K_T>0$ such that
	\[
		\sup_{t,\tau\in(0,T)}
		\Var \left\< D\mathcal{S}_{N,t}\,,v_{N,\tau}\right>_{\HH}
		\le K_T\,\frac{(\log N)^3}{N^3}\qquad\text{for all $N\ge\e$}.
	\]
\end{proposition}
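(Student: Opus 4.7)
Following the Malliavin--Stein strategy of \cite{HNV2018}, the plan is to decompose $\<D\mathcal{S}_{N,t},v_{N,\tau}\>_{\HH}$ into a pathwise piece and a Walsh-integral piece, bound the variance of each via the Poincar\'e inequality \eqref{Poincare:Cov} and the Walsh--It\^o isometry respectively, and then reduce the resulting spatial-kernel integrals to the Fej\'er-type quantities $G_{N,\cdot}$ controlled by Lemmas~\ref{lem7.2} and \ref{G_N:limit} of the Appendix.

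Invoking \eqref{DS} and a stochastic Fubini---justified because $v_{N,\tau}(r,z)$ is $\mathcal{F}_r$-measurable and Lemma~\ref{derivative:estimate} supplies the requisite $L^2$-estimates---yields
\begin{align*}
  \<D\mathcal{S}_{N,t}\,, v_{N,\tau}\>_{\HH}
    &= \int_{(0,t\wedge\tau)\times\R} g_{N,t}(r,z)g_{N,\tau}(r,z)U(r,z)^2\,\d r\,\d z \\
    &\qquad + \int_{(0,t)\times\R}\!\left(\int_{(0,s)\times\R} g_{N,t}(s,y)D_{r,z}U(s,y)v_{N,\tau}(r,z)\,\d r\,\d z\right)\eta(\d s\,\d y) \\
    &=: I_1 + I_2.
\end{align*}
Since $\Var(I_1+I_2)\le 2\Var(I_1)+2\E(I_2^2)$, it suffices to bound each of $\Var(I_1)$ and $\E(I_2^2)$ by $K_T(\log N)^3/N^3$ uniformly in $t,\tau\in(0,T)$.

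For $I_1$, the product $g_{N,t}g_{N,\tau}$ is deterministic, so \eqref{Poincare:Cov} gives $\Var(I_1)\le\int\|D_{\sigma,w}I_1\|_2^2\,\d\sigma\,\d w$. The chain rule, Lemma~\ref{derivative:estimate}, \eqref{||u(s,y)||}, and the identity \eqref{PPPP} together produce
\[
  \|D_{\sigma,w}U(r,z)^2\|_2 \le C\,\bm{p}_{\sigma(r-\sigma)/r}\bigl(w-\tfrac{\sigma}{r}z\bigr)\,\mathbf{1}_{(0,r)}(\sigma).
\]
Substituting this and the definition \eqref{g+v} of $g_{N,\cdot}$, expanding the square, and iterating \eqref{PPPP} and Lemma~\ref{lem7.2} to collapse each Gaussian convolution and each pair of $x$-averagings, reduces $\Var(I_1)$ to the desired bound. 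Likewise, for $I_2$ the adaptedness of the integrand permits the Walsh--It\^o isometry, and Minkowski, Lemma~\ref{derivative:estimate}, and \eqref{||u(s,y)||} yield
\[
  \E(I_2^2)\le C\int_0^t\d s\int_{-\infty}^\infty\d y\left(\int_0^s\d r\int_{-\infty}^\infty\d z\, g_{N,t}(s,y)\bm{p}_{r(s-r)/s}\bigl(z-\tfrac{r}{s}y\bigr)g_{N,\tau}(r,z)\right)^{\!2},
\]
an integral of the same triple-averaging structure, which consequently admits the same bound.

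The \emph{main obstacle} is the careful accounting of logarithmic factors. Heuristically, each of the three spatial averages $\frac{1}{N}\int_0^N \bm{p}_{\star}(\cdot-sx/\star)\,\d x$ that appears in $\<D\mathcal{S}_{N,t},v_{N,\tau}\>_{\HH}$ contributes, after a Parseval reduction through Lemma~\ref{lem7.2}, a factor of $1/N$ and a time integral of the form $\int_0^\star s^{-1}(\cdots)\,\d s$ whose logarithmic divergence at the origin is controlled by $\log N$ via Lemma~\ref{G_N:limit}. One must verify that \emph{exactly} three such factors accumulate (matching the single $\log N$ appearing in Lemma~\ref{lem:||S||} for $\Var(\mathcal{S}_{N,t})$, against the three averagings present here---one each from $\mathcal{S}_{N,t}$, from $D\mathcal{S}_{N,t}$, and from $v_{N,\tau}$) and that the implied constants remain finite as $t,\tau\downarrow 0$. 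Establishing this uniform-in-$(t,\tau)$ control, in the face of the integrable but unbounded singularity of the kernels at $s=0$, is the crux of the proof.
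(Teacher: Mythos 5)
Your decomposition is, up to a stochastic Fubini, exactly the paper's: your $I_1$ is $\mathcal{X}_{N,t,\tau}=\<v_{N,t},v_{N,\tau}\>_\HH$ and your $I_2$ is $\mathcal{Y}_{N,t,\tau}$ from \eqref{DS:v:X:Y}--\eqref{XY}, and your tools (Poincar\'e/Clark--Ocone for $I_1$, the Walsh--It\^o isometry for $I_2$, together with Lemma \ref{derivative:estimate}, \eqref{||u(s,y)||} and \eqref{PPPP}) are the same ones used in Lemmas \ref{lem:Var(X)} and \ref{lem:Var(Y)}. The derivative bound $\|D_{\sigma,w}U(r,z)^2\|_2\lesssim \bm{p}_{\sigma(r-\sigma)/r}(w-\tfrac{\sigma}{r}z)$ and the isometry bound for $\E(I_2^2)$ are correct. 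So the framework is right.

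The genuine gap is that the quantitative core is asserted rather than proved. After the reductions you describe, one is left with five- and six-fold integrals of products of four $g$-kernels and heat kernels, and the statement that these are $O((\log N)^3/N^3)$ \emph{uniformly in} $t,\tau\in(0,T)$ is precisely what the proposition claims; saying that iterating \eqref{PPPP} and Lemma \ref{lem7.2} ``reduces $\Var(I_1)$ to the desired bound,'' and then flagging the uniform-in-$(t,\tau)$ log-accounting as ``the crux,'' leaves that crux unestablished. In the paper this is the bulk of Lemmas \ref{lem:Var(X)} and \ref{lem:Var(Y)}: two of the four $g$-factors are bounded by $\tfrac{\nu}{s}N^{-1}$ (using \eqref{g+v}), repeated use of the scaling identity \eqref{p-scale} and the semigroup property collapses the Gaussian convolutions, Lemma \ref{lem7.2} converts the remaining double $x$-average into a Fej\'er integral (net factor $N^{-3}$), and then nontrivial changes of variables plus an integration by parts against the Gaussian cutoff $\e^{-c\theta z^2/N^2}$ extract a factor $\bigl(2\log N+\log(1+1/(t\wedge\tau))\bigr)^3$, with the prefactor $t\wedge\tau$ taming the $\log^3(1/(t\wedge\tau))$ so the bound is uniform on $(0,T)^2$. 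Note also that your heuristic ``one $\log N$ per spatial average'' is not the actual mechanism: the three logarithms come from three singular time integrals of the type $\int \d s/s$ (in $r$, $s$, $s'$) cut off at scale $\sim N^{-2}$ by Gaussian factors; a single application of Lemma \ref{G_N:limit} handles only one such integral, and capturing all three simultaneously---while keeping the constant finite as $t\wedge\tau\downarrow 0$---is exactly the computation your proposal omits.
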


We plan to first prove Proposition \ref{Var<Ds,v>}. Then, we will use this
proposition to prove Theorem \ref{TVD}. The key to the proof of Proposition
\ref{Var<Ds,v>} is the following simple decomposition, which is an immediate
consequence of \eqref{DS}:
\begin{equation}\label{DS:v:X:Y}
	\left\< D\mathcal{S}_{N,t}\,, v_{N,\tau}\right\>_{\HH} =
	\mathcal{X}_{N,t,\tau} +\mathcal{Y}_{N,t,\tau},
\end{equation}
where
\begin{equation}\label{XY}\begin{split}
	\mathcal{X}_{N,t,\tau} &:= \left\< v_{N,t}\,,v_{N,\tau}\right\>_{\HH},\quad\text{and}\\
	\mathcal{Y} _{N,t,\tau} &:= \int_0^\infty\d r\int_{-\infty}^\infty\d z\
		v_{N,\tau}(r\,,z)\left( \int_{(r,t)\times\R} D_{r,z} v_{N,t}(s\,,y)\,\eta(\d s\,\d y)\right).
\end{split}\end{equation}
The decomposition \eqref{DS:v:X:Y} ensures that
\begin{equation}\label{3V}
	\Var\<D\mathcal{S}_{N,t}\,,v_{N,\tau}\>_{\HH} \le 2\Var(\mathcal{X}_{N,t,\tau})
	+ 2\Var(\mathcal{Y}_{N,t,\tau}).
\end{equation}
Therefore, the bulk of the work is to establish bounds on the last two
variances. Those require some effort and are carried out separately, using
slightly different ideas, in Lemmas \ref{lem:Var(X)} and \ref{lem:Var(Y)}
respectively. In light of those lemmas and \eqref{3V}, the proof Proposition
\ref{Var<Ds,v>} is immediate, with no  need for additional proof.

First let us observe that the mean of $\<D\mathcal{S}_{N,t}\,,v_{N,\tau}\>_\HH$
is carried by $\mathcal{X}_{N,t,\tau}$.

\begin{lemma}\label{lem:E<DS,v>}
	For every $T,N>0$ and $t,\tau\in(0\,,T)$,
	\[
		\E\mathcal{Y}_{N,t,\tau}=0\quad\text{and}\quad
		\E \left\< D\mathcal{S}_{N,t}\,, v_{N,\tau}\right\>_{\HH}
		= \E \mathcal{X}_{N,t,\tau} = \Cov\left( \mathcal{S}_{N,t}~,~\mathcal{S}_{N,\tau}\right).
	\]
\end{lemma}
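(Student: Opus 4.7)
The plan is to derive both identities from the Malliavin duality formula together with the fact that $\mathcal{S}_{N,t}=\delta(v_{N,t})$ is a Walsh--It\^o integral because $v_{N,t}$ is adapted. Since $v_{N,t}$ lies in the domain of $\delta$, the divergence has mean zero, so $\E\mathcal{S}_{N,t}=0$ for every $t>0$. Hence
\[
    \Cov\left(\mathcal{S}_{N,t}\,,\mathcal{S}_{N,\tau}\right) = \E\left[\mathcal{S}_{N,t}\,\delta(v_{N,\tau})\right] = \E\left\< D\mathcal{S}_{N,t}\,, v_{N,\tau}\right\>_\HH
\]
by the Malliavin integration-by-parts formula, provided that $\mathcal{S}_{N,t}\in\mathbb{D}^{1,2}$ and $v_{N,\tau}\in\mathrm{Dom}(\delta)$ --- both of which are guaranteed by \eqref{S=delta(v)} and \eqref{DS} together with the $L^k$-bounds from Lemma \ref{derivative:estimate} and \eqref{||u(s,y)||}. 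In view of the decomposition \eqref{DS:v:X:Y}, the second chain of equalities in the lemma will follow once we verify the first assertion $\E\mathcal{Y}_{N,t,\tau}=0$.

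To prove $\E\mathcal{Y}_{N,t,\tau}=0$, I would exploit adaptedness. For every fixed $(r,z)$ with $r<t$, the integrand $(s,y)\mapsto\bm{1}_{(r,t)}(s)D_{r,z}v_{N,t}(s,y) = \bm{1}_{(r,t)}(s)g_{N,t}(s,y)\,D_{r,z}U(s,y)$ is $\mathcal{F}_s$-adapted (since $U(s,y)$ is $\mathcal{F}_s$-measurable and $g_{N,t}$ is deterministic), so the inner stochastic integral in the definition \eqref{XY} of $\mathcal{Y}_{N,t,\tau}$ is a genuine Walsh--It\^o martingale integral, and in particular its conditional expectation given $\mathcal{F}_r$ vanishes. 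Since $v_{N,\tau}(r,z)$ is $\mathcal{F}_r$-measurable, the tower property yields
\[
    \E\!\left[ v_{N,\tau}(r,z)\int_{(r,t)\times\R} D_{r,z}v_{N,t}(s,y)\,\eta(\d s\,\d y) \right] = 0
\]
for almost every $(r,z)\in(0,\tau)\times\R$. An ordinary Fubini argument over $(r,z)$, justified by the $L^2$-bounds derived from Lemma \ref{derivative:estimate} together with the $L^k$-estimate \eqref{||u(s,y)||} on $U$, then yields $\E\mathcal{Y}_{N,t,\tau}=0$.

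The only mildly delicate step is the Fubini swap used to bring the outer $(r,z)$-integration inside the expectation; once that is in place, everything is automatic. I would handle this by bounding, via BDG and Lemma \ref{derivative:estimate}, the $L^2(\Omega)$-norm of the integrand in $(r,z)$, showing that it is integrable over $(0,\tau)\times\R$ (the relevant kernels $g_{N,t}$ and $\bm{p}_{s(t-s)/t}$ produce only finite Gaussian-type integrals). No further ingredients are required.
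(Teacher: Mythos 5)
Your proposal is correct and follows essentially the same route as the paper: the Malliavin duality formula $\E\langle DF, v\rangle_{\HH}=\E[F\,\delta(v)]$ applied with $F=\mathcal{S}_{N,t}=\delta(v_{N,t})$ yields the covariance identity, while adaptedness of the integrand and the zero-mean property of the Walsh integral give $\E\mathcal{Y}_{N,t,\tau}=0$, and the decomposition \eqref{DS:v:X:Y} finishes the proof. You merely spell out the conditioning and Fubini details that the paper leaves implicit.
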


\begin{proof}
	Thanks to Gaussian integration by parts (see Nualart \cite[(1.42)]{Nualart}),
	$\E(\< DF\,,V\>_{\HH}) = \E[F\delta(V)]$ for all $F\in\mathbb{D}^{1,2}$ and
	$V\in\text{\rm Dom}[\delta]$.  Choose $F\equiv1$ to observe the well-known
	fact that $\delta(V)$ has mean zero, and choose $F=\delta(U)$ to see that
	$\E(\<D\delta(U)\,,V\>_{\HH})=\Cov(\delta(U)\,,\delta(V))$ whenever
	$U,V\in\text{\rm Dom}[\delta]$.  Thanks to \eqref{S=delta(v)} we can apply
	the preceding with $U=v_{N,t}$ and $V=v_{N,\tau}$ to see that
	$\mathcal{S}_{N,t}=\delta(U)$ and $\mathcal{S}_{N,\tau}=\delta(V)$ [from
	\eqref{S=delta(v)}], whence $\E \< D\mathcal{S}_{N,t}\,, v_{N,\tau}\>_{\HH}
	=\Cov(\mathcal{S}_{N,t}\,,\mathcal{S}_{N,\tau})$.  Since the Walsh integral
	has mean zero and $U$ is adapted, $\E\mathcal{Y}_{N,t,\tau}=0$; see
	\eqref{XY}. This and \eqref{DS:v:X:Y} together complete the proof.
\end{proof}

\begin{lemma}\label{lem:Var(X)}
	For every $T>0$ there exists a real number $A_T>0$ such that
	\[
		\sup_{t,\tau\in(0,T)}\Var(\mathcal{X}_{N,t,\tau}) \le A_T\,\frac{(\log N)^3}{N^3}
		\quad\text{uniformly for every $N\ge\e$}.
	\]
\end{lemma}

\begin{proof}
	Choose and fix $0<t,\tau<T$ and $N\ge\e$.
	It follows readily from \eqref{XY} and our efforts thus far that $\mathcal{X}_{N,t,\tau}$
	is Malliavin differentiable, and the following is a version of the Malliavin derivative:
	\[
		D_{r,z}\mathcal{X}_{N,t,\tau} = 2\bm{1}_{[0,t\wedge\tau]}(r)\int_r^{t\wedge\tau}
		\d s\int_{-\infty}^\infty\d y\
		g_{N,t}(s\,,y) g_{N,\tau}(s\,,y)U(s\,,y) D_{r,z}U(s\,,y).
	\]
	Moreover, it follows from this and the definition of the $\mathcal{H}$-norm that
	\[
		\| D\mathcal{X}_{N,t,\tau}\|_{\HH}^2 = 4\int_0^{t\wedge\tau}\d r\int_{-\infty}^\infty\d z
		\left|\int_r^{t\wedge\tau}
		\d s\int_{-\infty}^\infty\d y\
		g_{N,t}(s\,,y) g_{N,\tau}(s\,,y)U(s\,,y) D_{r,z}U(s\,,y)\right|^2.
	\]
	According to \eqref{U}, \eqref{||u(s,y)||}, and Lemma \ref{derivative:estimate},
	whenever $0<s,s'<T$ and $y,y'\in\R$, the following holds a.s.\
	for a.e.\ every $(r\,,z)\in(s\wedge s',t)\times\R$:
	\begin{align*}
		\left| \E\left[ U(s\,,y) D_{r,z}U(s\,,y) U(s',y') D_{r,z}U(s',y')\right]\right|
			&\le c_{4,T}^2\left\| D_{r,z}U(s\,,y)\right\|_4 \left\|
			D_{r,z}U(s',y')\right\|_4\\
		&\le c_{T,4}^2C_{T,4}^2\,\frac{\bm{p}_{s-r}(y-z)\bm{p}_r(z)}{\bm{p}_s(y)}
			\frac{\bm{p}_{s'-r}(y'-z)\bm{p}_r(z)}{\bm{p}_{s'}(y')}\\
		&=: \tfrac14 A_T\,
			\bm{p}_{r(s-r)/s}\left( z - \frac rs y\right)
			\bm{p}_{r(s'-r)/s'}\left( z - \frac{r}{s'} y'\right),
	\end{align*}
	where we have appealed to \eqref{PPPP} in the last line. Therefore,
	\begin{align*}
		\E\left(\|D\mathcal{X}_{N,t,\tau}\|^2_{\HH}\right) &\le A_T\int_0^{t\wedge\tau}\d r\int_{-\infty}^\infty\d z
			\int_r^{t\wedge\tau}
			\d s\int_{-\infty}^\infty\d y\int_r^{t\wedge\tau}
			\d s'\int_{-\infty}^\infty\d y'\\
		&\quad\times g_{N,t}(s\,,y) g_{N,\tau}(s\,,y)g_{N,t}(s',y') g_{N,\tau}(s',y')
			\bm{p}_{r(s-r)/s}\left( z - \frac rs y\right)
			\bm{p}_{r(s'-r)/s'}\left( z - \frac{r}{s'} y'\right)\\
		&= A_T\int_0^{t\wedge\tau}\d r
			\int_r^{t\wedge\tau}
			\d s\int_{-\infty}^\infty\d y\int_r^{t\wedge\tau}
			\d s'\int_{-\infty}^\infty\d y'\\
		&\quad\times g_{N,t}(s\,,y) g_{N,\tau}(s\,,y)g_{N,t}(s',y') g_{N,\tau}(s',y')
			\bm{p}_{[r(s-r)/s]+[r(s'-r)/s']}\left( \frac rs y - \frac{r}{s'}y'\right),
	\end{align*}
	thanks to the semigroup property of the heat kernel. Since $g_{N,\nu}(s,
	y)\le \frac{\nu}{s}N^{-1}$ for all $N>0,\nu \geq s>0$ and $y\in \R$, we may
	bound two of the $g$-terms from above, each by $N^{-1}$, in order to find
	that
	\begin{align*}
		\E\left(\|D\mathcal{X}_{N,t,\tau}\|^2_{\HH}\right) &\le \frac{A_T}{N^2}\int_0^{t\wedge\tau}\d r
			\int_r^{t\wedge\tau}
			\frac{{t\vee\tau}}{s}\d s\int_{-\infty}^\infty\d y\int_r^{t\wedge\tau}
			\frac{{t\vee\tau}}{s'}\d s'\int_{-\infty}^\infty\d y'\\
		&\quad\times g_{N,t\wedge\tau}(s\,,y)g_{N,t\wedge\tau}(s',y')
			\bm{p}_{[r(s-r)/s]+[r(s'-r)/s']}\left( \frac rs y - \frac{r}{s'}y'\right)\\
		&=\frac{A_T}{N^4}\int_0^{t\wedge\tau}\d r
			\int_r^{t\wedge\tau}
			\frac{{t\vee\tau}}{s}\d s\int_{-\infty}^\infty\d y\int_r^{t\wedge\tau}
			\frac{{t\vee\tau}}{s'}\d s'\int_{-\infty}^\infty\d y'\int_0^N\d x\int_0^N\d x'\\
		&\quad\times \bm{p}_{s(\{t\wedge\tau\}-s)/(t\wedge\tau)}\left( y - \frac{s}{t\wedge\tau} x\right)
			\bm{p}_{s'(\{t\wedge\tau\}-s')/(t\wedge\tau)}\left( y' - \frac{s'}{t\wedge\tau} x'\right)\\
		&\quad\times \bm{p}_{[r(s-r)/s]+[r(s'-r)/s']}\left( \frac rs y - \frac{r}{s'}y'\right).
	\end{align*}
	It follows from \eqref{p-scale} that
	\[
		\bm{p}_{[r(s-r)/s]+[r(s'-r)/s']}\left( \frac rs y - \frac{r}{s'}y'\right)
		=\frac sr \bm{p}_{[s(s-r)/r]+[s^2(s'-r)/(s'r)]}\left( y - \frac{s}{s'}y'\right).
	\]
	Therefore, the semigroup property of the heat kernel implies the following:
	\begin{align*}
		\E\left(\|D\mathcal{X}_{N,t,\tau}\|^2_{\HH}\right) &\le \frac{A_T(t\vee \tau)^2}{N^4}\int_0^{t\wedge\tau}\frac{\d r}{r}
			\int_r^{t\wedge\tau} \d s\int_r^{t\wedge\tau}
			\frac{\d s'}{s'}\int_{-\infty}^\infty\d y'\int_0^N\d x\int_0^N\d x'\\
		&\hskip.6in \bm{p}_{s'(\{t\wedge\tau\}-s')/(t\wedge\tau)}\left( y' - \frac{s'}{t\wedge\tau} x'\right)\\
		&\hskip.6in \times
			\bm{p}_{[s(s-r)/r]+[s^2(s'-r)/(s'r)]+[s(\{t\wedge\tau\}-s)/(t\wedge\tau)]}
			\left( \frac{s}{s'}y' - \frac{s}{t\wedge\tau}x\right).
	\end{align*}
	A repeated appeal to \eqref{p-scale} yields
	\begin{align*}
			&\bm{p}_{[s(s-r)/r]+[s^2(s'-r)/(s'r)]+[s(\{t\wedge\tau\}-s)/(t\wedge\tau)]}
				\left( \frac{s}{s'}y' - \frac{s}{t\wedge\tau}x\right)\\
			&\hskip1in
				=\frac{s'}{s}\bm{p}_{[(s')^2(s-r)/(sr)]+[s'(s'-r)/r]+[(s')^2(\{t\wedge\tau\}-s)/\{s(t\wedge\tau)\}]}
				\left( y' - \frac{s'}{t\wedge\tau}x\right).
	\end{align*}
	And yet another appeal to the semigroup property reveals the following:
	\begin{align*}
		&\E\left(\|D\mathcal{X}_{N,t,\tau}\|^2_{\HH}\right)
			\le \frac{A_T (t\vee\tau)^2}{N^4}\int_0^{t\wedge\tau}\frac{\d r}{r}
			\int_r^{t\wedge\tau} \frac{\d s}{s}\int_r^{t\wedge\tau} \d s'\int_0^N\d x\int_0^N\d x'\\
		& \quad\times\bm{p}_{[(s')^2(s-r)/(sr)]+[s'(s'-r)/r]+[(s')^2(\{t\wedge\tau\}-s)/\{s(t\wedge\tau)\}]
			+[s'(\{t\wedge\tau\}-s')/(t\wedge\tau)]}
			\left( \frac{s'}{t\wedge\tau}(x-x')\right)\\
		&=\frac{A_T(t\vee\tau)^2(t\wedge \tau)}{N^4}
			\int_0^{t\wedge\tau} \frac{\d s}{s}\int_0^{t\wedge\tau}\frac{\d s'}{s'}
			\int_0^{s\wedge s'} \frac{\d r}{r}
			\int_0^N\d x\int_0^N\d x'\\
		& \quad\times
			\bm{p}_{[(t\wedge\tau)^2(s-r)/(sr)]+[(t\wedge\tau)^2(s'-r)/(s'r)]+[(t\wedge\tau)(\{t\wedge\tau\}-s)/s]
			+[(t\wedge\tau)(\{t\wedge\tau\}-s')/s']}(x-x'),
	\end{align*}
	thanks also to scaling \eqref{p-scale} and Fubini's theorem. Since
	\begin{align*}
		\frac{2(t\wedge \tau)(t\wedge \tau -r)}{r}= \frac{(t\wedge\tau)^2(s-r)}{sr}+
		\frac{(t\wedge\tau)^2(s'-r)}{s'r}+
		\frac{(t\wedge\tau)(t\wedge\tau-s)}{s} +
		\frac{(t\wedge\tau)(t\wedge\tau-s')}{s'},
	\end{align*}
	we appeal to Lemma \ref{lem7.2} in order to find that
	\[
		\E\left(\|D\mathcal{X}_{N,t,\tau}\|^2_{\HH}\right)\le
		\frac{A_T(t\vee\tau)^{2}(t\wedge \tau)}{N^3\pi}
		\int_0^{t\wedge\tau} 			\frac{\d s}{s}\int_0^{t\wedge\tau}\frac{\d s'}{s'}
		\int_0^{s\wedge s'}\frac{\d r}{r}
		\int_{-\infty}^\infty\d z\ \varphi(z)\e^{-((t\wedge \tau)((t\wedge \tau)-r))z^2/(rN^2)}.
	\]
	Integrating in the variables $s$ and $ s'$ yields
	\[
	\E\left(\|D\mathcal{X}_{N,t,\tau}\|^2_{\HH}\right)\le
		\frac{A_T(t\vee\tau)^{2}(t\wedge \tau)}{N^3\pi}
		\int_0^{t\wedge \tau}\frac{\d r}{r}\left(\log \left(\frac {t\wedge\tau}{r}\right)\right)^2 \int_{\R} \e^{-\frac { (t\wedge \tau)((t\wedge \tau)-r)}r \frac { z^2} {N^2}}  \varphi(z) \d z,
	\]
    Making the change of variables $\frac {(t\wedge\tau)-r} r =\theta$, allows us to write
	\[
		\E\left(\|D\mathcal{X}_{N,t,\tau}\|^2_{\HH}\right)\le
		\frac{A_T(t\vee\tau)^{2}(t\wedge \tau)}{N^3\pi} \int_{\R}  \varphi(z) \d z
		\int_0^\infty   \d \theta \frac 1{\theta +1}  \left( \log (\theta +1) \right)^2  \e^{-\frac { (t\wedge \tau)\theta  z^2} {N^2}}.
	\]
	Integrating by parts and using the fact that
	\[
		\left(\frac 13 ( \log(\theta+1))^3    \e^{-\frac { (t\wedge \tau)\theta  z^2} {N^2}}  \right)  _{\theta=0} ^{\theta=\infty} =0,
	\]
	we obtain
  \begin{align*}
		\E\left(\|D\mathcal{X}_{N,t,\tau}\|^2_{\HH}\right)&\le \frac{A_T(t\vee\tau)^{2}(t\wedge \tau)}{3N^3\pi}
		\int_{\R}   \varphi(z) \d z
		\int_0^\infty   \d \theta   \left( \log (\theta +1) \right)^3  \e^{-\frac { t\theta  z^2} {N^2}}  \frac {(t\wedge \tau)z^2}{N^2}  \\
		&  =		\frac{A_T(t\vee\tau)^{2}(t\wedge \tau)}{3N^3\pi}
		\int_{\R} \varphi(z) \d z \int_0^\infty \d \theta   \left( \log \left(\frac {N^2} {(t\wedge\tau)z^2}\theta +1\right) \right)^3     \e^{-\theta}.
	\end{align*}
	Using the inequality
	\begin{align*}
		\log \left(\frac {N^2} {(t\wedge\tau)z^2}\theta +1\right) &\le
		2 \log N + \log(\theta+1) + \log \left( \frac 1{t\wedge\tau}+ 1\right) + \log \left( \frac 1 {z^2} +1\right) \\
		&\le  \left( 2 \log N + \log \left( \frac 1{t\wedge\tau}+ 1\right)\right) \left(1+ \log(\theta+1)+ \log \left( \frac 1 {z^2} +1\right)  \right),
	\end{align*}
  and taking into account that
  \[
	 C:=  \int_{\R} \varphi(z) \d z
      \int_0^\infty   \d \theta   \left(1+ \log(\theta+1)+ \log \left( \frac 1 {z^2} +1\right)  \right)^3 e^{-\theta}<\infty,
	\]
	we finally get
	\[
		\E\left(\|D\mathcal{X}_{N,t,\tau}\|^2_{\HH}\right)\le
		\frac{CA_T(t\vee\tau)^{2}(t\wedge \tau)}{3N^3\pi} \left( 2 \log N + \log\left( \frac 1{t\wedge\tau}+ 1\right)\right)^3,
  \]
  which provides the desired estimate.
\end{proof}

{
\begin{lemma}\label{lem:Var(Y)}
	For every $T>0$ there exists a real number $A_T'>0$ such that
	\[
		\sup_{t,\tau\in(0,T)}\Var\left(\mathcal{Y}_{N,t,\tau}\right) \le
		A_T'\,\frac{(\log N)^3}{N^3}
		\quad\text{uniformly for every $N\ge\e$}.
	\]
\end{lemma}

\begin{proof}
	Lemma \ref{lem:E<DS,v>} ensures that $\mathcal{Y}_{N,t,\tau}$ has mean zero, and hence
	\begin{align*}
		\Var\left(\mathcal{Y}_{N,t,\tau}\right)=\E\int_0^{t\wedge \tau}\d r
		\int_{-\infty}^\infty\d z\int_0^{t\wedge\tau}\d r'
		\int_{-\infty}^\infty\d z'
		\left( \int_{(r,t)\times\R} v_{N,\tau}(r\,,z)D_{r,z} v_{N,t}(s\,,y)
		\,\eta(\d s\,\d y)\right)&\\
		\times \left( \int_{(r',t)\times\R} v_{N,\tau}(r',z')
		D_{r',z'} v_{N,t}(s\,,y)\,\eta(\d s\,\d y)\right)&,
	\end{align*}
	which, by Fubini's theorem, is
	\begin{align*}
		= \int_0^{t\wedge\tau}\d r\int_{-\infty}^\infty\d z
		\int_0^{t\wedge\tau}\d r'\int_{-\infty}^\infty\d z'
		\int_{r\vee r'}^{t}\d s\int_{-\infty}^\infty\d y
		\: g_{N,\tau}(r\,,z) g_{N,\tau}(r',z')
		g^2_{N,t}(s\,,y) &\\
		\times \E\left[U(r\,,z)\cdot D_{r,z}U(s,y) \cdot U(r'\,,z')
	\cdot D_{r',z'} U(s\,,y)\right]&.
	\end{align*}
	Combine \eqref{U} and \eqref{||u(s,y)||} with Lemma \ref{derivative:estimate}
	in order to see that
	\begin{align*}
		&\left| \E\left[U(r\,,z)\cdot D_{r,z} U(s\,,y)\cdot
		U(r'\,,z')\cdot D_{r',z'} U(s\,,y)\right] \right|
			\le c_{T,4}^2
			\| D_{r,z} U(s\,,y)\|_4 \|D_{r',z'} U(s\,,y)\|_4\\
		&\le c_{T,4}^2C_{T,4}^2\,\frac{\bm{p}_{s-r}(y-z)\bm{p}_r(z)}{\bm{p}_s(y)}
			\frac{\bm{p}_{s-r'}(y-z')\bm{p}_{r'}(z')}{\bm{p}_s(y)}
			=: L_T\,
			\bm{p}_{r(s-r)/s}\left( z - \frac rs y\right)
			\bm{p}_{r'(s-r')/s}\left( z' - \frac{r'}{s} y\right).
	\end{align*}
	Plug this into the preceding identity for $\Var(\mathcal{Y}_{N,t,\tau})$ in order to see that
	\begin{align*}
		&\Var(\mathcal{Y}_{N,t,\tau})
			\le L_T\int_0^{t\wedge\tau}\d r\int_{-\infty}^\infty\d z
			\int_0^{t\wedge\tau}\d r'\int_{-\infty}^\infty\d z'
			\int_{r\vee r'}^{t}\d s\int_{-\infty}^\infty\d y\\
		&\hskip.9in\times g_{N,\tau}(r\,,z) g_{N,\tau}(r',z')
			g^2_{N,t,x}(s\,,y)
			\bm{p}_{r(s-r)/s}\left( z - \frac rs y\right)
			\bm{p}_{r'(s-r')/s}\left( z' - \frac{r'}{s} y\right).
	\end{align*}
	We can apply first \eqref{g+v}, and then the semigroup property of the heat kernel,
	in order to see that for all $\tilde{x}\in [0, 1]$
	\begin{align*}
		\int_{-\infty}^\infty g_{N,\tau}(r\,,z)\bm{p}_{r(s-r)/s}\left( z - \frac rs y\right)\d z
			&\leq\frac1N\int_0^N\d x_1\int_{-\infty}^\infty\d z\
			\bm{p}_{r(s-r)/s}\left( z - \frac rs y\right)\bm{p}_{r(\tau-r)/\tau}\left( z - \frac r\tau x_1\right)\\
		&=\frac1N\int_0^N \bm{p}_{[r(s-r)/s] + [r(\tau-r)/\tau]}\left( \frac rs y - \frac r\tau x_1\right)\d x_1.
	\end{align*}
	Therefore,
	\begin{align*}
		\Var(\mathcal{Y}_{N,t, \tau})
		&\le \frac{L_T}{N^2}\int_0^{t\wedge\tau}\d r\int_0^{t\wedge\tau}\d r'
			\int_{r\vee r'}^{t}\d s\int_{-\infty}^\infty\d y\int_0^N\d x_1\int_0^N\d x_2\: g^2_{N,t}(s\,,y) \\
		&\qquad\times
			\bm{p}_{[r(s-r)/s] + [r(\tau-r)/\tau]}\left( \frac rs y - \frac r\tau x_1\right)
			\bm{p}_{[r'(s-r')/s] + [r'(\tau-r')/\tau]}\left( \frac{r'}{s} y - \frac{r'}{\tau} x_2\right).
	\end{align*}
	Since  $g_{N,\nu}(s, y)\le \frac{\nu}{s}N^{-1}$
	for all $N>0,\nu\geq s>0$ and $y\in \R$, we have
	\begin{align*}
		\Var(\mathcal{Y}_{N,t, \tau})
			&\le \frac{t^2 L_T}{N^4}\int_0^{t\wedge\tau}\d r\int_0^{t\wedge\tau}\d r'
			\int_{r\vee r'}^{t}\frac{\d s}{s^2}\int_{-\infty}^\infty\d y\int_0^N\d x_1\int_0^N\d x_2\\
		&\quad\times
			\bm{p}_{[r(s-r)/s] + [r(\tau-r)/\tau]}\left( \frac rs y - \frac r\tau x_1\right)
			\bm{p}_{[r'(s-r')/s] + [r'(\tau-r')/\tau]}\left( \frac{r'}{s} y - \frac{r'}{\tau} x_2\right).
	\end{align*}
	Now we use scaling [see \eqref{p-scale}] to see that
	\[
			\bm{p}_{[r(s-r)/s] + [r(\tau-r)/\tau]}\left( \frac rs y - \frac r\tau x_1\right) =
			\frac sr\,\bm{p}_{[s(s-r)/r] + [s^2(\tau-r)/(r\tau)]}\left( y - \frac s\tau x_1\right),
	\]
	with an analogous expression holding for the version with the variables with the primes. This
	endeavor, and the semigroup property of the heat kernel, together yield
	\[
		\Var(\mathcal{Y}_{N,t,\tau})
		\le \frac{t^2 L_T}{N^4}\int_0^{t\wedge\tau}\frac{\d r}{r}\int_0^{t\wedge\tau}\frac{\d r'}{r'}
		\int_{r\vee r'}^{t}\d s\int_0^N\d x_1\int_0^N\d x_2\
		\bm{p}_{_{\Gamma+\Gamma'}}\left( \frac{s}{\tau} x_1 - \frac{s}{\tau} x_2\right),
	\]
	with $\Gamma$ and $\Gamma'$
	being the following functions whose variable-dependencies are excised for  ease
	of exposition:
	\[
		\Gamma := \frac{s(s-r)}{r} + \frac{s^2(\tau-r)}{r\tau},
		\qquad
		\Gamma' := \frac{s(s-r')}{r'} + \frac{s^2(\tau-r')}{r'\tau}.
	\]
	A change of variables $[a=sx_1/\tau,\, a'=sx_2/\tau$] yields
	\begin{align}
		\Var(\mathcal{Y}_{N,t,\tau})
			&\le \frac{t^2\tau^2 L_T}{N^4}\int_0^{t\wedge\tau}\frac{\d r}{r}\int_0^{t\wedge\tau}\frac{\d r'}{r'}
			\int_{r\vee r'}^{t}\d s\ s^{-2}\int_0^{Ns/\tau}\d a\int_0^{Ns/\tau}\d a'\
			\bm{p}_{_{\Gamma+\Gamma'}}(a-a')\nonumber\\
		&=\frac{t^2\tau L_T}{\pi N^3}
			\int_0^{t}\frac{\d s}{s}
			\int_0^{s\wedge\tau}\frac{\d r}{r}\int_0^{s\wedge\tau}\frac{\d r'}{r'}
			\int_{-\infty}^\infty\d z\
			\varphi(z){\e^{-(\Gamma+\Gamma')z^2 \tau^2/(2N^2s^2)}}\nonumber\\
		&=\frac{t^2\tau L_T}{\pi N^3}\int_{-\infty}^\infty\d z\ \varphi(z)
			\int_0^{t}\frac{\d s}{s}
			\left(\int_0^{s\wedge\tau}\frac{\d r}{r}
			{\e^{-\Gamma z^2 \tau^2/(2N^2s^2)}}\right)^2\nonumber\\
			&=\frac{t^2\tau L_T}{\pi N^3}\int_{-\infty}^\infty\d z\ \varphi(z)
			\int_0^{t}\frac{\d s}{s}
			\left(\int_0^{s\wedge\tau}\frac{\d r}{r}
			{\e^{-z^2(r^{-1}-s^{-1}+r^{-1}-\tau^{-1}) \tau^2/(2N^2)}}\right)^2, \label{case2}
	\end{align}
	where $\varphi(z)=(1-\cos z)/z^2$,
	and we have used Lemma \ref{lem7.2} in the first equality. 
	
	\textbf{Case 1: $t\leq \tau$}.  In this case, from the proceeding,
	\begin{align*}
		\Var(\mathcal{Y}_{N,t,\tau})
		&\leq\frac{t^2\tau L_T}{\pi N^3}\int_{-\infty}^\infty\d z\ \varphi(z)
			\int_0^{\tau}\frac{\d s}{s}
			\left(\int_0^{s}\frac{\d r}{r}
			{\e^{-z^2(r^{-1}-s^{-1}+r^{-1}-\tau^{-1}) \tau^2/(2N^2)}}\right)^2.
	\end{align*}
	Using change of variable $\frac{s-r}{r}=\theta$, we obtain
	\begin{align*}
		\Var(\mathcal{Y}_{N,t,\tau})
		&\leq\frac{t^2\tau L_T}{\pi N^3}\int_{-\infty}^\infty\d z\ \varphi(z)
			\int_0^{\tau}\frac{\d s}{s}
			\left(\int_0^{\infty}\frac{\d \theta}{\theta+1}
			{\e^{-z^2(2\theta s^{-1}+s^{-1}-\tau^{-1}) \tau^2/(2N^2)}}\right)^2\\
		&=\frac{t^2\tau L_T}{\pi N^3}\int_{-\infty}^\infty\d z\ \varphi(z)
			\int_0^{\tau}\frac{\d s}{s} {\e^{-z^2(s^{-1}-\tau^{-1}) \tau^2/N^2}}
			\left(\int_0^{\infty}\frac{\d \theta}{\theta+1}
			{\e^{-z^2\theta\tau^2/(sN^2)}}\right)^2\\
		&=\frac{t^2\tau L_T}{\pi N^3}\int_{-\infty}^\infty\d z\ \varphi(z)
			\int_0^{\infty}\frac{\d \xi}{\xi +1} {\e^{-z^2\xi\tau/N^2}}
			\left(\int_0^{\infty}\frac{\d \theta}{\theta+1}
			{\e^{-z^2\theta(\xi+1)\tau/N^2}}\right)^2\\
		&\leq\frac{t^2\tau L_T}{\pi N^3}\int_{-\infty}^\infty\d z\ \varphi(z)
			\left(\int_0^{\infty}\frac{\d \theta}{\theta+1}
			{\e^{-z^2\theta\tau/N^2}}\right)^3,
	\end{align*}
	where, in the second equality, we use change of variable $\frac{\tau-s}{s}=\xi$.
	  Since for $z\neq0$
  \begin{align}
		\int_0^{\infty}\frac{\d \theta}{\theta+1}
			{\e^{-z^2\theta\tau/N^2}}&= \int_0 ^\infty \frac 1 {\theta+  \frac {\tau z^2}{N^2}}     \e^{-\theta}   \d \theta \nonumber\\
			 &
		\le
		\int_1 ^\infty     \e^{-\theta}   \d \theta +
		\int_0 ^1  \frac 1 {\theta+  \frac {\tau z^2}{N^2}}         \d \theta =  \e^{-1} +
		\log \left( 1+ \frac {N^2} {\tau z^2} \right)    \nonumber \\
		& \le \e^{-1} +  2 \log N + \log (1+ 1/\tau)+ \log (1+  z^{-2}) \label{log}
	\end{align}
	and taking into account that
	\[
		\int_{\R}\varphi(z)(1 + \log (1+  z^{-2}) )^3 \d z <\infty,
	\]
	we obtain that
		\[
		\sup_{\substack{0<t\leq\tau\leq T}}\Var\left(\mathcal{Y}_{N,t,\tau}\right) \le
		L_T'\,\frac{(\log N)^3}{N^3}
		\quad\text{uniformly for every $N\ge\e$}.
	\]
	
		\textbf{Case 2: $t> \tau$}.  In this case, according to \eqref{case2}, we have
	\begin{align}
		\Var(\mathcal{Y}_{N,t,\tau})
		&\leq\frac{t^2\tau L_T}{\pi N^3}\int_{-\infty}^\infty\d z\ \varphi(z)
			\int_0^{\tau}\frac{\d s}{s}
			\left(\int_0^{s}\frac{\d r}{r}
			{\e^{-z^2(r^{-1}-s^{-1}+r^{-1}-\tau^{-1}) \tau^2/(2N^2)}}\right)^2 \nonumber\\
			&\quad + \frac{t^2\tau L_T}{\pi N^3}\int_{-\infty}^\infty\d z\ \varphi(z)
			\int_{\tau}^t\frac{\d s}{s}
			\left(\int_0^{\tau}\frac{\d r}{r}
			{\e^{-z^2(r^{-1}-s^{-1}+r^{-1}-\tau^{-1}) \tau^2/(2N^2)}}\right)^2\nonumber\\
			& \leq \frac{L_T'(\log N)^3}{N^3} + \frac{t^2\tau L_T}{\pi N^3}\int_{-\infty}^\infty\d z\ \varphi(z)
			\int_{\tau}^t\frac{\d s}{s}
			\left(\int_0^{\tau}\frac{\d r}{r}
			{\e^{-z^2(r^{-1}-s^{-1}+r^{-1}-\tau^{-1}) \tau^2/(2N^2)}}\right)^2 \label{t>tau}
	\end{align}
	where the second inequality holds by the result of \textbf{Case 1}.  Moreover, a change of variable $\frac{\tau-r}{r}=\theta$ yields that
	\begin{align*}
	&\int_{-\infty}^\infty\d z\ \varphi(z)
			\int_{\tau}^t\frac{\d s}{s}
			\left(\int_0^{\tau}\frac{\d r}{r}
			{\e^{-z^2(r^{-1}-s^{-1}+r^{-1}-\tau^{-1}) \tau^2/(2N^2)}}\right)^2\\
	&\quad =\int_{-\infty}^\infty\d z\ \varphi(z)
			\int_{\tau}^t\frac{\d s}{s}
			\left(\int_0^{\infty}\frac{\d \theta}{1+\theta}
			{\e^{-z^2(\tau^{-1}-s^{-1}+2\theta\tau^{-1}) \tau^2/(2N^2)}}\right)^2\\
	&\quad \leq \int_{-\infty}^\infty\d z\ \varphi(z)
			\int_{\tau}^t\frac{\d s}{s}
			\left(\int_0^{\infty}\frac{\d \theta}{1+\theta}
			{\e^{-z^2\theta\tau/N^2}}\right)^2\\
	&\quad \leq \log\frac{t}{\tau}\int_{-\infty}^\infty\d z\ \varphi(z)
			\left(\e^{-1} +  2 \log N + \log (1+ 1/\tau)+ \log (1+  z^{-2})\right)^2,
	\end{align*}
	where the last inequality is due to \eqref{log}. The proceeding together with \eqref{t>tau} implies that 
			\[
		\sup_{\substack{0<\tau<t\leq T}}\Var\left(\mathcal{Y}_{N,t,\tau}\right) \le
		L_T''\,\frac{(\log N)^3}{N^3}
		\quad\text{uniformly for every $N\ge\e$}.
	\]
	
	The proof is complete. 
\end{proof}
}

We now conclude this section with the following.

\begin{proof}[Proof of Theorem \ref{TVD}]
	From Proposition \eqref{Var<Ds,v>} [with $t=\tau$], we
	see that for all $T>0$ there exists a number
	$K_T>0$ such that
	\[
		\Var\left< D\mathcal{S}_{N,t}\,,v_{N,t}\right>_\HH \le K_T
		\frac{(\log N)^3}{N^3}
		\quad\text{for all $t\in(0\,,T)$ and $N\ge\e$.}
	\]
	By \eqref{S=delta(v)} and Proposition \ref{pr:MS},
	\begin{align*}
		d_{\rm TV}\left( \frac{\mathcal{S}_{N,t}}{\sqrt{\Var(\mathcal{S}_{N,t})}}
			~,~ Z\right) &\le 2\sqrt{\Var\left< \frac{D\mathcal{S}_{N,t}}{\sqrt{\Var(\mathcal{S}_{N,t})}}
			~,~ \frac{v_{N,t}}{\sqrt{\Var(\mathcal{S}_{N,t})}}\right>_{\HH}
			}\\
		&\le 2\sqrt{K_T}\,
		\frac{(\log N)^{3/2}}{N^{3/2}\Var(\mathcal{S}_{N,t})}\qquad\text{uniformly for all
			$t\in(0\,,T)$ and $N\ge\e$.}
	\end{align*}
	 Proposition \ref{pr:Cov:asymp} ensures that
	$\Var(\mathcal{S}_{N,t})\sim 2t\log(N)/N$ as $N\to\infty$,
	which concludes the proof.
\end{proof}

\section{Proof of Theorem \ref{th:FCLT}} \label{sec:6}

In order to prove Theorem \ref{th:FCLT} we need to establish the weak
convergence of the finite-dimensional distributions, as well as tightness.  The
following addresses tightness.

\begin{proposition}[Tightness]\label{pr:tightness}
	For every $T>0$, $k\ge2$, and $\gamma\in(0\,,1/6)$,
	there exists a number $L=L(T,k\,,\gamma)>0$
	such that for all $\varepsilon \in (0\,,1]$,
	\[
		\sup_{0<t\le T}
		\E\left(  \left| \mathcal{S}_{N,t+\varepsilon}-\mathcal{S}_{N,t}
		\right|^k \right) \le L \varepsilon^{\gamma k}
		\left(\frac{\log N}{N}\right)^{k/2}\qquad\text{uniformly for all $N\ge\e$.}
	\]
\end{proposition}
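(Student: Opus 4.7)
Starting from \eqref{S=delta(v)} and the fact that $g_{N,\tau}(s,y)=0$ for $s>\tau$, the natural decomposition is
\[
\mathcal{S}_{N,t+\varepsilon}-\mathcal{S}_{N,t}=J_1+J_2,
\]
where
\begin{align*}
J_1 &:= \int_{(t,t+\varepsilon)\times\R} U(s,y)g_{N,t+\varepsilon}(s,y)\,\eta(\d s\,\d y),\\
J_2 &:= \int_{(0,t)\times\R} U(s,y)[g_{N,t+\varepsilon}(s,y)-g_{N,t}(s,y)]\,\eta(\d s\,\d y).
\end{align*}
Applying the Burkholder-Davis-Gundy inequality together with the uniform bound $\|U(s,y)\|_k\le c_{T,k}$, which follows from \eqref{U} and \eqref{||u(s,y)||}, reduces the estimate to controlling the deterministic kernel integrals $\|J_1\|_k^2\le 4kc_{T,k}^2\mathcal{I}_1$ and $\|J_2\|_k^2\le 4kc_{T,k}^2\mathcal{I}_2$, where $\mathcal{I}_1:=\int_t^{t+\varepsilon}\!\int g_{N,t+\varepsilon}(s,y)^2\,\d y\,\d s$ and $\mathcal{I}_2:=\int_0^t\!\int[g_{N,t+\varepsilon}(s,y)-g_{N,t}(s,y)]^2\,\d y\,\d s$.

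\emph{Easy regime $t\le\varepsilon$ and the bound on $\mathcal{I}_1$.} When $t\le\varepsilon$ one can avoid the decomposition altogether: the triangle inequality combined with Lemma \ref{lem:||S||} and the fact that $(t+\varepsilon)\log_+(1/(t+\varepsilon))\le 2\varepsilon\log_+(1/\varepsilon)\le C_\gamma\varepsilon^{2\gamma}$ for every $\gamma<1/2$ yields
\[
\|\mathcal{S}_{N,t+\varepsilon}-\mathcal{S}_{N,t}\|_k \le 2A_{k,T}\sqrt{(t+\varepsilon)\log_+(1/(t+\varepsilon))}\sqrt{\log N/N}\le C_\gamma\,\varepsilon^\gamma\sqrt{\log N/N}.
\]
For $\mathcal{I}_1$, the computation carried out in the proof of Lemma \ref{lem:||S||} gives
\[
\int_{-\infty}^\infty g_{N,t+\varepsilon}(s,y)^2\,\d y=\frac{t+\varepsilon}{\pi N s}\int_{-\infty}^\infty\frac{1-\cos z}{z^2}\exp\!\left(-\frac{(t+\varepsilon)(t+\varepsilon-s)z^2}{N^2 s}\right)\d z,
\]
so that $\mathcal{I}_1\le C_T(t+\varepsilon)\log(1+\varepsilon/t)/N$. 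In the range $t>\varepsilon$ this is bounded by $2C_T\varepsilon/N\le 2C_T\varepsilon^{2\gamma}\log N/N$ for every $\gamma\le 1/2$ and $N\ge\e$.

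\emph{Bound on $\mathcal{I}_2$ and the main obstacle.} The heart of the argument is $\mathcal{I}_2$ in the regime $t>\varepsilon$. Parseval's identity in $y$ converts it to a Fourier integral, and a direct computation gives the clean product representation
\[
\hat g_{N,\tau}(s,\xi) = \phi(\xi Ns/\tau)\exp\!\left(-\frac{s(\tau-s)\xi^2}{2\tau}\right),\qquad \phi(\alpha):=\int_0^1 e^{i\alpha\lambda}\,\d\lambda=\frac{e^{i\alpha}-1}{i\alpha}.
\]
With $\alpha_\varepsilon:=\xi Ns/(t+\varepsilon)$, $\alpha:=\xi Ns/t$, $\sigma_\varepsilon:=s(t+\varepsilon-s)/(t+\varepsilon)$, $\sigma:=s(t-s)/t$, one splits
\[
\hat g_{N,t+\varepsilon}-\hat g_{N,t} = [\phi(\alpha_\varepsilon)-\phi(\alpha)]e^{-\sigma_\varepsilon\xi^2/2}+\phi(\alpha)[e^{-\sigma_\varepsilon\xi^2/2}-e^{-\sigma\xi^2/2}]
\]
and uses the Lipschitz bounds $|\phi(\alpha_\varepsilon)-\phi(\alpha)|\le \tfrac12 |\xi|Ns\varepsilon/[t(t+\varepsilon)]$, which follows from $\|\phi'\|_\infty\le\tfrac12$, and $|e^{-\sigma_\varepsilon\xi^2/2}-e^{-\sigma\xi^2/2}|\le (s^2\varepsilon\xi^2/[2t(t+\varepsilon)])e^{-\sigma\xi^2/2}$, together with the trivial bounds $|\phi|\le 1$ and $e^{-\sigma\xi^2/2}\le 1$. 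The ensuing Gaussian $\xi$-integrals are elementary. The principal obstacle is that the naive insertion of the Lipschitz estimates produces $\sigma^{-3/2}$ and $\sigma^{-5/2}$ factors after $\xi$-integration, and these are not integrable in $s$ near $s\to 0$ or $s\to t$; one must therefore interpolate against the trivial bounds precisely in those ranges of $s$. Optimizing this interpolation is what forces the restriction $\gamma<1/4$, a threshold that matches the classical time-H\"older regularity of solutions to the stochastic heat equation driven by space-time white noise.
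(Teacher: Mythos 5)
Your setup mirrors the paper's: the same decomposition into a stochastic integral over $(t,t+\varepsilon)\times\R$ plus one over $(0,t)\times\R$ carrying the kernel difference, BDG together with the uniform moment bound $\|U(s,y)\|_k\le c_{T,k}$, Plancherel in the spatial variable, and a separate small-$t$ regime handled by Lemma \ref{lem:||S||}. The pieces you actually carry out ($\mathcal{I}_1$, and the regime $t\le\varepsilon$) are correct. But the whole difficulty of the proposition sits in $\mathcal{I}_2$ for $t>\varepsilon$, and there you only state a plan: you note that inserting the Lipschitz bounds produces non-integrable $\sigma^{-3/2}$ and $\sigma^{-5/2}$ factors and assert that interpolating against the trivial bounds fixes this and ``forces $\gamma<1/4$''. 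That step is not carried out, and as described it cannot work. In your parametrization the $N$-decay of $\mathcal{I}_2$ comes entirely from the decay $|\phi(u)|\le 2/|u|$ of the oscillatory factor (equivalently from $\int_\R|\phi(\xi Ns/\tau)|^2\,\d\xi=2\pi\tau/(Ns)$); every tool you propose to interpolate with --- $\|\phi'\|_\infty\le\tfrac12$ (whose use injects a factor $N$ through $\alpha=\xi Ns/\tau$), $|\phi|\le1$, and $\e^{-\sigma\xi^2/2}\le1$ --- discards that decay, so any estimate assembled from them alone yields a quantity with no decay in $N$ at all, and you cannot reach the target $\varepsilon^{2\gamma}\log N/N$. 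In addition, because you split only at $t=\varepsilon$, you would need $\mathcal{I}_2\lesssim\varepsilon^{2\gamma}\log N/N$ uniformly down to $t=\varepsilon$; this is strictly stronger than what the paper's own method produces, since its intermediate estimate (Lemma \ref{lem:tightness}) degenerates like $\varepsilon^{1/2}t^{-(1+\delta)}\log N/N$ and is useless at $t\approx\varepsilon$ --- the proposition is rescued there only by splitting at $t=\varepsilon^\beta$ and optimizing $\beta$ against the small-$t$ bound of Lemma \ref{lem:||S||}.

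For comparison, the paper's treatment of the hard term keeps all three structural ingredients at once: after the change of variables $\xi\to t\xi/(Ns)$ the prefactor $\frac{t}{2\pi N}\int_0^t s^{-1}\,\d s$ makes the $1/N$ explicit; the difference of oscillatory integrals is written exactly as $\frac{\e^{i\xi}}{i\xi}\bigl[\e^{-i\varepsilon\xi/(t+\varepsilon)}-1\bigr]+\frac{\varepsilon}{it\xi}\bigl[\e^{it\xi/(t+\varepsilon)}-1\bigr]$, so each piece retains the $1/\xi$ decay while exhibiting smallness in $\varepsilon$; and the $\log N$ factor, with only a $\log_+(1/t)$ loss in $t$, is extracted from $\int_0^t s^{-1}\e^{-t(t-s)\xi^2/(sN^2)}\,\d s=\frac{\log N}{t}G_{N,t}(\xi)$ via Lemmas \ref{G_N:limit} and \ref{lem:easy:int}. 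To complete your argument you need an estimate on $\phi(\alpha_\varepsilon)-\phi(\alpha)$ and on the Gaussian difference that simultaneously preserves the $\varepsilon$-smallness, the $1/\xi$ (hence $1/N$) decay, and logarithmic control of the $\d s/s$ integral; pointwise interpolation of sup-norm bounds in $\xi$, applied before the $\xi$-integration, does not achieve this, and you would still have to quantify the resulting $t$-singularity and re-optimize the split point accordingly.
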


The proof of Proposition \ref{pr:tightness} hinges on the following lemma,
which is a useful inequality when $t$ stays away from zero.

\begin{lemma}\label{lem:tightness}
	For every $T>0$, $k\ge2$ and $\delta>0$,
	there exists a number $K=K(T,k, \delta)>0$ such that
	\[
		\E\left( \left| \mathcal{S}_{N,t+\varepsilon}-\mathcal{S}_{N,t}
		\right|^k \right)\le \frac{K\varepsilon^{ k/2}}{(t\wedge 1)^{k(1+\delta)/2}}
		\left(\frac{\log N}{N}\right)^{k/2},
	\]
	uniformly for all $N\ge\e$, $t\in(0\,,T]$, and $\epsilon\in(0,1)$.
\end{lemma}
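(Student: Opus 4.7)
The plan is to reduce the problem to a deterministic Fourier-analytic estimate, and then dispatch that estimate by a delicate interpolation. Starting from the representation $\mathcal{S}_{N,t}=\delta(v_{N,t})$ of \eqref{S=delta(v)}, linearity of $\delta$ gives $\mathcal{S}_{N,t+\varepsilon}-\mathcal{S}_{N,t}=\delta(v_{N,t+\varepsilon}-v_{N,t})$, whose integrand is adapted. Applying the BDG and Minkowski inequalities, together with the uniform bound $\|U(s,y)\|_k\le c_{T,k}$ (from \eqref{||u(s,y)||} and the definition \eqref{U}), I reduce everything to
\[
\|\mathcal{S}_{N,t+\varepsilon}-\mathcal{S}_{N,t}\|_k^2 \le C\bigl[ I_1(t,\varepsilon)+I_2(t,\varepsilon)\bigr],
\]
\[
I_1:=\int_0^t\!\!\int_\R [g_{N,t+\varepsilon}(s,y)-g_{N,t}(s,y)]^2\,\d y\,\d s,\quad
I_2:=\int_t^{t+\varepsilon}\!\!\int_\R g_{N,t+\varepsilon}(s,y)^2\,\d y\,\d s.
\]

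I would first dispose of the easy regime $\varepsilon \ge t\wedge 1$ by a dichotomy: from $\|\mathcal{S}_{N,t+\varepsilon}-\mathcal{S}_{N,t}\|_k^2\le 2\|\mathcal{S}_{N,t+\varepsilon}\|_k^2+2\|\mathcal{S}_{N,t}\|_k^2$ and Lemma~\ref{lem:||S||}, the claim follows directly since $t\log_+(1/t)$ is bounded and the ratio against $\varepsilon^{1/2}/(t\wedge 1)^{1+\delta}$ is acceptable. In the complementary regime $\varepsilon<t\wedge 1$, the same Fourier identity as in the proof of Lemma~\ref{lem:||S||}, applied over the short interval $(t,t+\varepsilon)$, gives $I_2\le C(t+\varepsilon)\varepsilon/(tN)\le C\varepsilon/N$, which is absorbed into the claimed bound since $\varepsilon\le\varepsilon^{1/2}$ and $\log N\ge 1$.

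The core of the argument is the estimate of $I_1$ via Plancherel. Direct computation yields $\widehat{g_{N,t}(s,\cdot)}(\xi)=A_t(s,\xi)B_t(s,\xi)$ with
\[
A_t(s,\xi):=\frac{e^{isN\xi/t}-1}{iNs\xi/t},\qquad B_t(s,\xi):=\exp\!\bigl(-s(t-s)\xi^2/(2t)\bigr).
\]
Setting $\tau:=t+\varepsilon$ and telescoping $A_\tau B_\tau-A_tB_t=A_\tau(B_\tau-B_t)+(A_\tau-A_t)B_t$, I bound each piece by interpolation. For $B_\tau-B_t$, the identity $\mu_\tau-\mu_t=s^2\varepsilon/(t\tau)$ (with $\mu_t:=s(t-s)/t$), combined with $|e^{-a}-e^{-b}|\le e^{-\min(a,b)}\min(|a-b|,1)$ and $\min(x,1)\le x^\theta$ for $\theta\in[0,1]$, delivers the $\varepsilon$-dependence. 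For $A_\tau-A_t$, the representation $(e^{ia}-1)/(ia)=\int_0^1 e^{ira}\,\d r$ gives $|A_\tau(s,\xi)-A_t(s,\xi)|\le \min(sN|\xi|\varepsilon/(2t\tau),2)$, which interpolates against the Gaussian decay of $|B_t|$. The choice $\theta=1/4$ is the value that produces the exponent $\varepsilon^{1/2}$ in $\|\cdot\|_k^2$.

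After these interpolations and the change of variable $z=sN\xi/\tau$, the $s$-integration transforms, via the further substitution $u=(t-s)/s$, into $\int_0^\infty (1+u)^{-1}\exp(-tuz^2/N^2)\,\d u$, which is bounded by $C\log(2+N^2/(tz^2))$ and is the source of the $\log N$ factor. The residual $z$-integrals involve $\varphi(z)=(1-\cos z)/z^2$ and are controlled using $\varphi(z)\le\min(1/2,2/z^2)$. The main obstacle is that at the critical exponent $\theta=1/4$ the relevant Beta-function and moment integrals diverge just barely; I would therefore take $\theta=1/4-\eta$ for a sufficiently small $\eta=\eta(\delta)>0$. The resulting loss $\varepsilon^{-2\eta}$ is absorbed into the $(t\wedge 1)^{-(1+\delta)}$ margin by means of $\varepsilon<t\wedge 1$, and any factor of $\log_+(1/t)$ emerging from the $z$-integral is absorbed similarly via $\log_+(1/t)\le C_\delta t^{-\delta}$, producing the claimed inequality.
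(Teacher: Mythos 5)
Your overall skeleton matches the paper's proof: split off the stochastic integral over $(t,t+\varepsilon)$ (your $I_2$ is the paper's $T_{\mathcal B}$-term), treat the main piece over $(0,t)$ by BDG, the moment bound \eqref{||u(s,y)||} and Plancherel, and telescope the Fourier transform into (oscillatory factor)$\times$(difference of Gaussian factors) plus (difference of oscillatory factors)$\times$(Gaussian factor) — this is exactly the paper's $J_1$/$J_2$ decomposition, with the $\log N$ coming from the same $s$-integral as in Lemma \ref{G_N:limit}. The easy regime $\varepsilon\ge t\wedge 1$ via Lemma \ref{lem:||S||}, and the bound on $I_2$, are fine.

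The gap is in your treatment of the $A_\tau(B_\tau-B_t)$ piece at the critical exponent. Having replaced $\min(x,1)$ by $x^\theta$ and found the integrals borderline divergent at $\theta=1/4$, you take $\theta=1/4-\eta$ and claim the resulting loss $\varepsilon^{-2\eta}$ "is absorbed into the $(t\wedge1)^{-(1+\delta)}$ margin by means of $\varepsilon<t\wedge1$". That inequality runs the wrong way: $\varepsilon<t\wedge1\le 1$ gives $\varepsilon^{-2\eta}\ge (t\wedge1)^{-2\eta}$, and since $\varepsilon$ may tend to $0$ with $t$ fixed, $\varepsilon^{-2\eta}$ cannot be dominated by any function of $t$ alone. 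So your argument yields only $\varepsilon^{k(1/4-\eta)}$ in place of the asserted $\varepsilon^{k/4}$ — a strictly weaker statement than the lemma (though it would still suffice for Proposition \ref{pr:tightness}). The borderline divergence itself is an artifact of the crude global bound $\min(x,1)\le x^{1/2}$: if you keep the exact factor $\bigl|1-\e^{-c\varepsilon\xi^2}\bigr|^2$ and either split the $\xi$-integral at the crossover of the two bounds $\varphi\le\min\bigl(\tfrac12,\,2/(\cdot)^2\bigr)$, or (as the paper does) first bound $\varphi$ by $2/(\cdot)^2$, extract $\log N\log_+(1/t)$ from the $s$-integral via Lemma \ref{G_N:limit}, and then rescale $z=\xi\sqrt{\varepsilon}$, the relevant integrand becomes $z^{-2}\bigl(1-\e^{-z^2/2}\bigr)^2\log_+(\sqrt\varepsilon/|z|)$, which behaves like $z^2$ near the origin and $z^{-2}$ at infinity, hence converges; this produces exactly the factor $\sqrt{\varepsilon}$ (with the $t^{-(1+\delta)}$ coming from $\log_+(1/t)\le C_\delta t^{-\delta}$), with no $\eta$-loss. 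With that replacement your argument closes; as written, the final absorption step is invalid.
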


\begin{proof}
	Thanks to \eqref{E:U} and \eqref{average}, we may write
	the following: For all $N,t>0$,
	\begin{align*}
		\mathcal{S}_{N,t+\varepsilon} - \mathcal{S}_{N,t} &=
			\frac1N\int_0^N \left[ U(t+\varepsilon\,,x) - U(t\,,x)\right]\d x\\
		&= \int_{(0,t)\times\R} U(s\,,y)\mathcal{A}(s\,,y)\,\eta(\d s\,\d y) +
			\int_{(t,t+\varepsilon)\times\R} U(s\,,y)\mathcal{B}(s\,,y) \,\eta(\d s\,\d y),
	\end{align*}
	almost surely, where
	\begin{align*}
		\mathcal{A}(s\,,y) &:= \frac1N\int_0^N\left[
			\bm{p}_{s(t+\varepsilon-s)/(t+\varepsilon)}\left( y - \frac{sx}{t+\varepsilon}\right)
			- \bm{p}_{s(t-s)/t}\left( y - \frac{sx}{t}\right)
			\right]\d x,\qquad\text{and}\\
		\mathcal{B}(s\,,y) &:= \frac1N\int_0^N
			\bm{p}_{s(t+\varepsilon-s)/(t+\varepsilon)}\left( y - \frac{sx}{t+\varepsilon}\right)
			\d x,
	\end{align*}
	and the dependence on the parameters $N$ and $\varepsilon$ are subsumed for ease of
	notation. Thus,
	\begin{equation}\label{STAB}
		\|\mathcal{S}_{N,t+\varepsilon} - \mathcal{S}_{N,t}\|_k \le
		T_{\mathcal{A}} + T_{\mathcal{B}},
	\end{equation}
	where
	\[
		T_{\mathcal{A}} :=
		\left\| \int_{(0,t)\times\R} U(s\,,y)
		\mathcal{A}(s\,,y) \,\eta(\d s\,\d y)\right\|_k
		\quad\text{and}\quad
		T_{\mathcal{B}} :=
		\left\| \int_{(t,t+\varepsilon)\times\R} U(s\,,y)
		\mathcal{B}(s\,,y) \,\eta(\d s\,\d y)\right\|_k.
	\]
	We will estimate $T_{\mathcal{A}}$ and $T_{\mathcal{B}}$
	separately and  in reverse order.

	To estimate $T_{\mathcal{B}}$ we appeal to the BDG inequality (with BDG constant
	$c_k$) as follows:
	\begin{align*}
		T_{\mathcal{B}}^2 &\le c_k\int_t^{t+\varepsilon}\d s\int_{-\infty}^\infty\d y\
			\| U(s\,,y)\|_k^2|\mathcal{B}(s\,,y)|^2
			\le c_k c_{k,T}^2\int_t^{t+\varepsilon}\d s\int_{-\infty}^\infty\d y\
			|\mathcal{B}(s\,,y)|^2\\
		&= \frac{c_kc_{k,T}^2}{N^2}\int_t^{t+\varepsilon}\d s\int_{-\infty}^\infty\d y
			\int_0^N\d x_1\int_0^N\d x_2\
			\bm{p}_{s(t+\varepsilon-s)/(t+\varepsilon)}\left( y - \frac{sx_1}{t+\varepsilon}\right)
			\bm{p}_{s(t+\varepsilon-s)/(t+\varepsilon)}\left( y - \frac{sx_2}{t+\varepsilon}\right),
	\end{align*}
	where we used \eqref{||u(s,y)||} to deduce the second inequality. Rearrange the integrals
	and compute the $\d y$-integral first to see from the semigroup property of the
	heat kernel that
	\begin{align*}
		T_{\mathcal{B}}^2 &\le \frac{c_kc_{k,T}^2}{N^2}\int_t^{t+\varepsilon}\d s
			\int_0^N\d x_1\int_0^N\d x_2\
			\bm{p}_{2s(t+\varepsilon-s)/(t+\varepsilon)}\left( \frac{s(x_1-x_2)}{t+\varepsilon}\right)\\
		&= \frac{c_kc_{k,T}^2(t+\varepsilon)^2}{N^2}\int_t^{t+\varepsilon}
			\frac{\d s}{s^2}\int_0^{sN/(t+\varepsilon)}\d x_1\int_0^{sN/(t+\varepsilon)}\d x_2\
			\bm{p}_{2s(t+\varepsilon-s)/(t+\varepsilon)}(x_1-x_2),
	\end{align*}
	after a change of variables. Since the $\d x_2$-integral is bounded above by one,
	it follows that
	\begin{equation}\label{T_B}
		T_{\mathcal{B}}^2 \le \frac{c_kc_{k,T}^2(t+\varepsilon)}{N}\int_t^{t+\varepsilon}
		\frac{\d s}{s}< \frac{c_kc_{k,T}^2(t+\varepsilon)}{Nt}\,\varepsilon.
	\end{equation}

	The estimation of $T_{\mathcal{A}}$ is more involved, though it starts in the same way as
	did the process of bounding $T_{\mathcal{B}}$. Namely, we write, using the BDG inequality,
	\begin{equation}\label{pre:T_A}\begin{split}
		T_{\mathcal{A}}^2  &\le c_k\int_0^t\d s\int_{-\infty}^\infty\d y\
			\| U(s\,,y)\|_k^2|\mathcal{A}(s\,,y)|^2\\
		&\le c_k c_{k,T}^2\int_0^t\d s\int_{-\infty}^\infty\d y\
			|\mathcal{A}(s\,,y)|^2\hskip1in[\text{by \eqref{||u(s,y)||}}]\\
		& = \frac{c_kc_{k,T}^2}{2\pi}\int_0^t\d s\int_{-\infty}^\infty\d\xi\
			\left| \widehat{\mathcal{A}(s)}(\xi)\right|^2
			= \frac{tc_kc_{k,T}^2}{2\pi N}\int_0^t\frac{\d s}{s}\int_{-\infty}^\infty\d\xi\
			\left| \widehat{\mathcal{A}(s)}(t\xi/(Ns))\right|^2,
	\end{split}\end{equation}
	owing to Plancherel's theorem and a change of variables. The correct change of variables is
	slightly tricky to find. But once we have it set up, as we have done above, we note that
	\begin{align*}
		\widehat{\mathcal{A}(s)}(t\xi/(Ns)) &= \frac1N\int_0^N\left[
			\exp\left( i\frac{tx\xi}{N(t+\varepsilon)} -
			\frac{t^2(t+\varepsilon-s)\xi^2}{2s(t+\varepsilon)N^2}\right) -
			\exp\left( i\frac{x\xi}{N} - \frac{t(t-s)\xi^2}{2sN^2}\right)\right]\d x\\
		&= \int_0^1\left[
			\exp\left( i\frac{ty\xi}{t+\varepsilon} - \frac{t^2(t+\varepsilon-s)\xi^2}{2s(t+\varepsilon)N^2}\right) -
			\exp\left( i y\xi - \frac{t(t-s)\xi^2}{2sN^2}\right)\right]\d y\\
		&= J_1 + J_2,
	\end{align*}
	where
	\begin{align*}
		J_1 & := \int_0^1 \e^{ity\xi/(t+\varepsilon)}\,\d y \times
			\left[ \exp\left(- \frac{t^2(t+\varepsilon-s)\xi^2}{2s(t+\varepsilon)N^2}\right) -
			\exp\left( - \frac{t(t-s)\xi^2}{2sN^2}\right)\right],
			\quad\text{and}\\
		J_2 & := \int_0^1\left[
			\exp\left( i\frac{ty\xi}{t+\varepsilon} \right) -
			\exp( iy\xi )\right]\d y\times
			\exp\left( - \frac{t(t-s)\xi^2}{2sN^2}\right).
	\end{align*}
	Since $(a+b)^2\le 2a^2+2b^2$ for all $a,b\in\R$,  we see from \eqref{pre:T_A} that
	\begin{equation}\label{T_A}
		T_{\mathcal{A}}^2 \le \frac{2tc_kc_{k,T}^2}{2\pi N}\int_0^t \frac {\d s}s \int_{-\infty}^\infty\d\xi\
		|J_1|^2 + \frac{2tc_kc_{k,T}^2}{2\pi N}\int_0^t   \frac {\d s}s\int_{-\infty}^\infty\d\xi\
		|J_2|^2.
	\end{equation}
	Define,
	\begin{equation}\label{varphi}
		\varphi(z) := \frac{1-\cos z}{z^2}\qquad\text{for all $z\in\R\setminus\{0\}$},
	\end{equation}
	and $\varphi(0)=1/2$ to preserve continuity. It is then easy to see that
	\begin{align*}
		|J_1| &= \sqrt{2\varphi\left( \frac{t\xi}{t+\varepsilon}\right)}\,
			\left| \exp\left(- \frac{t^2(t+\varepsilon-s)\xi^2}{2s(t+\varepsilon)N^2}\right) -
			\exp\left( - \frac{t(t-s)\xi^2}{2sN^2}\right)\right|\\
		&=\sqrt{2\varphi\left( \frac{t\xi}{t+\varepsilon}\right)}\,
			\exp\left( - \frac{t(t-s)\xi^2}{2sN^2}\right)
			\left| 1 - {\exp\left( -\frac{\varepsilon t\xi^2}{
			2(t+\varepsilon)N^2}\right)} \right|
	\end{align*}
	Therefore,
	\begin{align}
		\int_0^t\frac{\d s}{s}\int_{-\infty}^\infty\d\xi\ |J_1|^2
			&\le 2\int_0^t\frac{\d s}{s}\int_{-\infty}^\infty\d\xi\
			\varphi\left( \frac{t\xi}{t+\varepsilon}\right)\,
			\exp\left( - \frac{t(t-s)\xi^2}{sN^2}\right)
			\left| 1 - {\exp\left( -\frac{\varepsilon t\xi^2}{
			2(t+\varepsilon)N^2}\right)} \right|^2 \notag \\
				&\le C\int_0^t\frac{\d s}{s}\int_{-\infty}^\infty\d\xi\
				\frac{1}{\xi^2}
				\exp\left( - \frac{t(t-s)\xi^2}{sN^2}\right)
				\left| 1 - {\exp\left( -\frac{\varepsilon t\xi^2}{
				2(t+\varepsilon)N^2}\right)} \right|^2 \notag \\
				&\le \frac{C}{N}\int_1^\infty \frac{\d r}{r}\int_{-\infty}^\infty\d z\
				\frac{1}{z^2}
				\exp\left( - t(r-1)z^2 \right)
				\left| 1 - {\exp\left( -\frac{\varepsilon t z^2}{
				2(t+\varepsilon)}\right)} \right|^2 \notag \\
				&\le \frac{C}{N}\int_1^\infty \frac{\d r}{r}\int_{-\infty}^\infty\d z\
				\frac{1}{z^2}
				\exp\left( - t(r-1)z^2 \right)
				\frac{\varepsilon t z^2}{ 2(t+\varepsilon)} \notag \\
				&\le \frac{C\varepsilon}{2N}\int_1^\infty \frac{\d r}{r}\int_{-\infty}^\infty\d z\
				\exp\left( - t(r-1)z^2 \right) \notag \\
				& = \frac{C\varepsilon}{N}\int_1^\infty \frac{1}{r \sqrt{t(r-1)}} \d r \notag \\
				& = \frac{C\varepsilon}{N\sqrt{t}},
				\label{J1}
		\end{align}
		where in the third step we have changed the variables $z=\xi/N$ and $r=t/s$,
		in the fourth step we have applied the inequality $(1-e^{-x^2})^2 \le
		1-e^{-x^2}\le x^2$, and the constant $C$ is a generic constant that may
		change values at each appearance .

	Next, we estimate the same quantity but where $J_1$ is replaced by $J_2$.
	A few lines of computation show that
	\[
		\int_0^1\left[
		\exp\left( i\frac{ty\xi}{t+\varepsilon} \right) -
		\exp( iy\xi )\right]\d y
		=\frac{\e^{i\xi}}{i\xi}\left[\exp\left( \frac{-i\varepsilon\xi}{t+\varepsilon}\right)-1\right]
		+ \frac{\varepsilon}{it\xi}\left[\exp\left( \frac{it\xi}{t+\varepsilon}\right) - 1\right],
	\]
	provided that $\xi\neq 0$. Because $(a+b)^2\le 2a^2+2b^2$ for all $a,b\in\R$,
	\begin{align*}
		\left| \int_0^1\left[
			\exp\left( i\frac{ty\xi}{t+\varepsilon} \right) -
			\exp( iy\xi )\right]\d y \right|^2
			&\le \frac{4}{\xi^2}\left[ 1 - \cos\left( \frac{\varepsilon\xi}{t+\varepsilon}\right)\right]
			+ \frac{4\varepsilon^2}{t^2\xi^2}
			\left[ 1 - \cos\left( \frac{t\xi}{t+\varepsilon}\right)\right]\\
		&\le\frac{2}{\xi^2}\left( \frac{\varepsilon\xi}{t+\varepsilon}\right)^2
			+ \frac{2\varepsilon^2}{t^2\xi^2}
			\left( \frac{t\xi}{t+\varepsilon}\right)^2
			< \frac{4\varepsilon^2}{t^2+\varepsilon^2},
	\end{align*}
	since $1-\cos\theta\le\frac12\theta^2$ for all $\theta\in\R$. Alternatively,
	we could have used the tautological bound, $1-\cos\theta\le2$ in order to deduce
	\[
		\left| \int_0^1\left[
		\exp\left( i\frac{ty\xi}{t+\varepsilon} \right) -
		\exp( iy\xi )\right]\d y \right|^2
		\le \frac{8}{\xi^2} + \frac{8\varepsilon^2}{t^2\xi^2}
		\le\frac{8}{\xi^2}\left( \frac{t^2 + \varepsilon^2}{t^2}\right).
	\]
	Combine the preceding two bounds in order to see that
	\[
		\left| \int_0^1\left[
		\exp\left( i\frac{ty\xi}{t+\varepsilon} \right) -
		\exp( iy\xi )\right]\d y \right|^2 \le
		8\left\{\left(\frac{\varepsilon^2}{t^2+\varepsilon^2}\right)\wedge
		\left( \frac{t^2 + \varepsilon^2}{t^2\xi^2}\right)\right\}.
	\]
	Consequently,
	\begin{align*}
		\int_0^t\frac{\d s}{s}\int_{-\infty}^\infty\d\xi\ |J_2|^2
			&\le 8\int_0^t\frac{\d s}{s}\int_{-\infty}^\infty\d\xi\
			\exp\left( - \frac{t(t-s)\xi^2}{sN^2}\right)
			\left[ \left( \frac{\varepsilon^2}{t^2 + \varepsilon^2} \right)
			\wedge \left( \frac{t^2 + \varepsilon^2}{t^2\xi^2}\right)\right]\\
		&=\frac{8\log N}{t}\int_{-\infty}^\infty G_{N,t}(\xi)
			\left[ \left( \frac{\varepsilon^2}{t^2 + \varepsilon^2} \right)
			\wedge \left( \frac{t^2 + \varepsilon^2}{t^2\xi^2}\right)\right]\d\xi,
	\end{align*}
	where $G_{N,t}$ is defined in \eqref{G_{N,t}} in the Appendix.
	Lemma \ref{G_N:limit} of the Appendix now tells us that
	\begin{equation}\label{J2}\begin{split}
		&\int_0^t\frac{\d s}{s}\int_{-\infty}^\infty\d\xi\ |J_2|^2\\
		&\hskip.3in\le  56 \log (N)\log_+(1/t)\int_{-\infty}^\infty
			\left[ \left( \frac{\varepsilon^2}{t^2 + \varepsilon^2} \right)
			\wedge \left( \frac{t^2 + \varepsilon^2}{t^2\xi^2}\right)\right]
			\log_+(1/|\xi|)\, \d\xi\\
		&\hskip.3in= 56\log (N)\log_+(1/t)\left( \frac{t^2 + \varepsilon^2}{t^2}\right)\int_{-\infty}^\infty
			\left[ \left( \frac{\varepsilon^2t^2}{(t^2 + \varepsilon^2)^2} \right)
			\wedge \frac{1}{\xi^2}\right]
			\log_+(1/|\xi|)\,\d\xi\\
		&\hskip.3in< \frac{560\log (N) \log_+(1/t)\varepsilon}{t};
	\end{split}\end{equation}
	see Lemma \ref{lem:easy:int} in the Appendix. Combine \eqref{T_A} with
	\eqref{J1} and \eqref{J2} in order to find that
	\[
		T_{\mathcal{A}}^2 \le  a_{T,k,\delta}\,\frac{\log
		N}{N}\frac{\varepsilon}{t^{1+\delta}},
	\]
	where $a_{T,k, \delta}$ is a real number depends only on $(T,k, \delta)$.
	We combine this bound with \eqref{T_B} and then \eqref{STAB} to conclude the proof.
\end{proof}

  We are now ready for the following.

 \begin{proof}[Proof of Proposition \ref{pr:tightness}]
	We assume without incurring loss in generality that $T>1/\e$.  Choose and fix
	two arbitrary numbers $\alpha\in(0\,,1)$ and $\beta\in(0\,,1)$.  On one hand,
	Lemma \ref{lem:tightness} implies that, uniformly for all
	$\varepsilon\in(0\,,1/\e)$, $N\ge\e$, and $t\in(\varepsilon^\beta,T]$,
	\begin{equation}\label{S-S1}
		\left\| \mathcal{S}_{N,t+\varepsilon}-\mathcal{S}_{N,t}
		\right\|_k \le M\varepsilon^{(1-2\beta(1+\delta))/2}
		\sqrt{\frac{\log N}{N}},
	\end{equation}
	with $M:=K^{1/k}$. [The condition $T>1/\e$ is there merely to ensure that
	$(\varepsilon^\beta,T]\neq\varnothing$].  On the other hand, Lemma
	\ref{lem:||S||} implies the existence of a real number $M'= M'(T,k\,,\alpha)$
	such that, uniformly for all $N\ge\e$ and $t\in(0\,,\varepsilon^\beta]$,
	\begin{equation}\label{S-S2}
		\left\| \mathcal{S}_{N,t+\varepsilon}-\mathcal{S}_{N,t}
		\right\|_k \le \left\| \mathcal{S}_{N,t+\varepsilon}\right\|_k
		+ \left\| \mathcal{S}_{N,t} \right\|_k \le M'\varepsilon^{\beta\alpha/2}\sqrt{\frac{\log N}{N}}.
	\end{equation}
	Choose $\beta = (2+\alpha+2\delta)^{-1}$ to match the exponents of
	$\varepsilon$ in \eqref{S-S1} and \eqref{S-S2} and hence conclude the
	asserted inequality of the proposition with $L:=M\vee M'$ and
	$\gamma:=\alpha/\{2(2+\alpha+2\delta)\}$. To finish the proof we note that
	$\gamma$ can be any number in $(0\,,1/6)$ since $\alpha\in(0\,,1)$ and
	$\delta>0$ are arbitrary.
 \end{proof}

Armed with Proposition \ref{pr:tightness}, we  conclude the section with the
following.

\begin{proof}[Proof of Theorem \ref{th:FCLT}]
	Choose and fix some $T>0$.  By Lemma \ref{lem:||S||} and Proposition
	\ref{pr:tightness}, a standard application of Kolmogorov's continuity theorem
	and the Arzel\`{a}-Ascoli theorem  ensures that $\{
	\sqrt{N/\log(N)}\,\mathcal{S}_{N,\bullet} \}_{N\ge\e}$ is a tight net of
	processes on $C[0\,,T]$. Therefore, it remains to prove that the
	finite-dimensional distributions of the process $t\mapsto\sqrt{N/\log
	N}\,\mathcal{S}_{N,t}$ converge to those of $\sqrt{2}B$; see for example
	Billingsley \cite{Bil99}.

	Let us choose and fix  some $T>0$ and $m\ge 1$ points $t_1,\ldots,t_m\in(0\,,T)$.
	Proposition \ref{pr:Cov:asymp} ensures that, for every $i,j=1,\ldots,m$,
	\begin{equation}\label{C}
		\Cov\left( \mathcal{S}_{N,t_i}\,,\mathcal{S}_{N,t_j}\right)
		\sim 2(t_i\wedge t_j)\frac{\log N}{N}\qquad\text{as $N\to\infty$}.
	\end{equation}
	Therefore, there exists $N_0>0$ such that
	\begin{equation}\label{Var(S)>}
		\Var(\mathcal{S}_{N,t_i}) \ge  t_i\,\frac{\log N}{N}
		\qquad\text{for every $i=1,\ldots,m$ and $N>N_0$. }
	\end{equation}
	Choose and fix an arbitrary $N>N_0$,
	and consider the following random variables:
	\[
		F_i  :=  \frac{\mathcal{S}_{N,t_i}}{\sqrt{\Var(\mathcal{S}_{N,t_i})}}
		\qquad \text{for }i=1,\ldots,m,
	\]
	and define $C_{i,j} := \Cov( F_i\,,F_j)$ for every $i,j=1,\ldots,m.$
	We will write $F:=(F_1\,,\ldots,F_m)$,
	and let $G=(G_1\,,\ldots,G_m)$ denote a centered Gaussian random vector
	with covariance matrix $C=(C_{i,j})_{1\le i,j\le m}$.

	Recall from \eqref{g+v} the random fields $v_{N,t_1},\ldots,v_{N,t_m}$, and define
	rescaled random fields $V_1,\ldots,V_m$ as follows:
	\[
		V_i := \frac{v_{N,t_i}}{\sqrt{\Var(\mathcal{S}_{N,t_i})}}\qquad
		\text{for }i=1,\ldots,m.
	\]
	According to \eqref{S=delta(v)},
	$F_i =\delta( V_i)$ for all $i=1,\ldots,m$. Lemma \ref{lem:E<DS,v>} ensures
	that $\E\<DF_i\,,V_j\>_\HH=C_{i,j}$ for all $i,j=1,\ldots,m$. Therefore,
	Lemma \ref{lemma: NP 6.1.2} ensures that
	\[
		\left| \E h(F) - \E h(G) \right| \le\tfrac12 \|h''\|_\infty
		\sqrt{\sum_{i,j=1}^m \Var\< DF_i\,,V_j\>_\HH},
	\]
	for all $h\in C^2_b(\R^m)$.
	Proposition \ref{Var<Ds,v>} and \eqref{Var(S)>} together assure us that
	\[
		\Var\<DF_i\,,V_j\>_\HH =
		\frac{\Var\<D\mathcal{S}_{N,t_i}~,~v_{N,t_j}\>_\HH}{%
		\Var(\mathcal{S}_{N,t_i})\Var(\mathcal{S}_{N,t_j})}
		\le \frac{K_T\log N}{N\min_{1\le k\le m}t_k}.
	\]
	whence
	\begin{equation}\label{h(F)}
		\left| \E h(F) - \E h(G) \right| \le c \|h''\|_\infty\sqrt{\log N}/\sqrt N,
	\end{equation}
	for $c=\frac12\sqrt{K_T/\min_{1\le k\le m}t_k}$.

	Now we let $N\to\infty$: Thanks to \eqref{C},
	$C_{i,j}\to (t_i\wedge t_j)/\sqrt{t_it_j}$ whence
	$G$ converges weakly to $(B_{t_i}/\sqrt{t_i})_{1\le i\le m}$ as $N\to\infty$.
	Therefore, it follows from \eqref{h(F)} that $F$ converges weakly to
	$(B_{t_i}/\sqrt{t_i})_{1\le i\le m}$ as $N\to\infty$. One more appeal to \eqref{C}
	shows that
	\[
		\sqrt{\frac{N}{\log N}}
		\left( \frac{\mathcal{S}_{N,t_1}}{\sqrt{2t_1}}\,,\ldots,
		\frac{\mathcal{S}_{N,t_m}}{\sqrt{2t_m}}\right)
		\xrightarrow{\rm d\,}
		\left( \frac{B_{t_1}}{\sqrt{t_1}}\,,\ldots,\frac{B_{t_m}}{\sqrt{t_m}}\right)
		\qquad\text{as $N\to\infty$}.
	\]
	It follows from this fact that the finite-dimensional distributions of
	$t\mapsto\sqrt{N/\log N}\,\mathcal{S}_{N,t}$ converge to those of
	$\sqrt 2\,B$ as $N\to\infty$. This verifies the remaining goal of this proof.
\end{proof}

\appendix
\section{Appendix}

We include in this section a few technical results that have been used along
the paper.  In order to describe the first result, define
\begin{align}\label{G_{N,t}}
	G_{N, t}(x) := \frac{t}{\log N}\int_0^t \exp\left( - \frac{(t - s)t}{s}\cdot \frac{x^2}{N^2}\right)
	\,\frac{\d s}{s}\qquad
	\text{for all $N,t>0$ and $x\in\R\setminus\{0\}$}.
\end{align}

\begin{lemma}\label{G_N:limit}
	For every $t>0$ and $x \in \R\setminus\{0\}$,
	\[
		\sup_{N \geq\e}G_{N, t}(x)
		\le 7t\log_+(1/t) \log_+(1/|x|),
	\]
	where we recall that $\log_+(w):=\log(\e + w)$ for all $w\ge0$. Moreover,
	\begin{align} \label{G_Nlimit}
		\lim_{N\to\infty}G_{N, t}(x) = 2t
		\qquad\text{for every $t>0$ and $x\in\R$.}
	\end{align}
\end{lemma}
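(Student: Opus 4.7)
The plan is to reduce both the supremum bound and the limit to a single elementary integral via a change of variables. Setting $u = (t-s)/s$, so $s = t/(1+u)$ and $\d s/s = -\d u/(1+u)$, and observing that $(t-s)t/s = tu$, I would rewrite
\[
    G_{N,t}(x) = \frac{t}{\log N}\int_0^{\infty} \frac{\e^{-\lambda u}}{1+u}\,\d u
    \qquad\text{where } \lambda := \frac{tx^2}{N^2}.
\]
This isolates the $N$-dependence inside $\lambda$ and reduces everything to understanding the integral $I(\lambda):=\int_0^\infty \e^{-\lambda u}/(1+u)\,\d u$.

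For the supremum bound, I would first prove the clean estimate $I(\lambda) \le C\log_+(1/\lambda)$ for every $\lambda>0$ (with some absolute constant $C$). For $\lambda\ge 1$ this is immediate from $I(\lambda)\le \int_0^\infty \e^{-\lambda u}\d u=1/\lambda\le 1\le\log_+(1/\lambda)$. For $\lambda<1$, I would split $I(\lambda)=\int_0^{1/\lambda}+\int_{1/\lambda}^\infty$; the first piece is bounded by $\int_0^{1/\lambda}\d u/(1+u)=\log(1+1/\lambda)$ and the second by $\e^{-1}/\lambda\cdot(1+\lambda)^{-1}\le \e^{-1}$. Combining gives $I(\lambda)\le C\log_+(1/\lambda)$. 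Next, I would apply the elementary inequality $\e+abc \le (\e+a)(\e+b)(\e+c)$ (with $a=N^2$, $b=1/t$, $c=1/x^2$) to get
\[
    \log_+(N^2/(tx^2)) \le \log_+(N^2) + \log_+(1/t) + \log_+(1/x^2).
\]
For $N\ge\e$ the first term is bounded by a constant multiple of $\log N$, and $\log_+(1/x^2)\le 2\log_+(1/|x|)$ via $\e+1/x^2\le(\e+1/|x|)^2$. Dividing by $\log N$ and using $\log_+(\cdot)\ge 1$ throughout absorbs the additive constants into the product $\log_+(1/t)\log_+(1/|x|)$, yielding the desired estimate with some constant (the paper's choice of $7$).

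For the limit, I would use the classical small-$\lambda$ asymptotic
\[
    I(\lambda) = -\log\lambda + K + o(1) \qquad\text{as } \lambda\downarrow 0,
\]
which follows (for instance) by writing $I(\lambda) = \int_\lambda^\infty \e^{-v}\,\d v/v \cdot \e^\lambda$ after substituting $v=\lambda(1+u)$, then using $E_1(\lambda) = -\log\lambda - \gamma + O(\lambda)$; alternatively, by a direct split at $u=1/\lambda$ with dominated convergence on the difference $(\e^{-v}-1)/v$. Since $-\log\lambda = 2\log N - \log(tx^2)$, one concludes
\[
    G_{N,t}(x) = \frac{t}{\log N}\bigl(2\log N - \log(tx^2) + K + o(1)\bigr)\longrightarrow 2t.
\]

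The substantive work is entirely in the elementary bookkeeping: getting $I(\lambda)\lesssim \log_+(1/\lambda)$ and then massaging $\log_+(N^2/(tx^2))$ into a product of the desired $\log_+$ factors. There is no real analytic obstacle; the only mild delicacy is extracting the specific constant $7$, for which the three-factor splitting above together with the lower bound $\log_+\ge 1$ is more than enough.
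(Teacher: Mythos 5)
Your proposal is correct and follows essentially the same route as the paper: after the substitution $u=(t-s)/s$ (the paper rescales once more to $s=\lambda u$), you split the integral at $u=1/\lambda$ exactly as the paper splits at $s=1$, bound the main piece by $\log(1+1/\lambda)$ and the tail by $\e^{-1}$, and then distribute the logarithm over the three factors to get the stated constant, while the limit $2t$ follows from the same $\log(1/\lambda)$ asymptotics (the paper computes $A_N$ explicitly rather than quoting the exponential-integral expansion). No gaps worth noting.
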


\begin{proof}
	We change variables in order to see that
	\[
		G_{N, t}(x) =
		\frac{t}{\log N} \int_0^{\infty}
		\frac{\e^{-s}}{s + \frac{tx^2}{N^2}}\,\d s
		= \frac{t}{\log N}(A_N - B_N+C_N).
	\]
	where
	\[
		A_N := \int_0^{1}\frac{\d s}{s + \frac{tx^2}{N^2}}= \log\left( \frac{N^2}{tx^2}+1\right),\quad
		B_N :=\int_0^1\frac{1-\e^{-s}}{s + \frac{tx^2}{N^2}}\,\d s,\quad
		C_N := \int_1^\infty\frac{\e^{-s}}{s + \frac{tx^2}{N^2}}\,\d s.
	\]
	This proves \eqref{G_Nlimit} because $B_N,C_N\in(0\,,1)$. Next,  we observe that
	\[
		\frac{N^2}{tx^2}+1
		\le N^2\left( \e + t^{-1}\right)\left( \e + |x|^{-2}\right),
	\]
	whence
	\[
		A_N\le 2\log N+\log_+(1/t)  + 2\log_+(1/|x|)\le
		5\log (N)\log_+(1/t)\log_+(1/|x|),
	\]
	for all $N\ge\e$, $t>0$, and all non-zero $x$.
	This does the job since $B_N+C_N\le 2$,
	which is manifestly less than or equal to $2\log_+(1/t)\log_+(1/|x|)$.
\end{proof}

The following lemma provides a useful heat-kernel formula.

\begin{lemma} \label{lem7.2}
	For all $N,t>0$, we have
	\[
		\int_0^N\d x_1\int_0^N\d x_2\  \bm{p}_t (x_1-x_2)
		=\frac {N} {\pi} \int_{-\infty}^\infty \varphi(z)
		\e^{ - t z^2/(2N^2)}\, \d z.
	\]
	where $\varphi(z)$ was defined in \eqref{varphi}.
\end{lemma}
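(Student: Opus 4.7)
My plan is to prove the identity via Parseval's formula after recognizing the double integral as a quadratic form in $\mathbf{1}_{[0,N]}$.

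First, I would rewrite the left-hand side as
\[
	\int_0^N\d x_1\int_0^N\d x_2\ \bm{p}_t(x_1-x_2)
	= \int_{-\infty}^\infty\int_{-\infty}^\infty f(x_1)f(x_2)\bm{p}_t(x_1-x_2)\,\d x_1\,\d x_2,
\]
where $f:=\mathbf{1}_{[0,N]}$. This is $\langle f*\bm{p}_t\,,\, f\rangle_{L^2(\R)}$, so Parseval's identity (with the paper's Fourier normalization $\hat{h}(\xi)=\int_{\R}\e^{i\xi x}h(x)\,\d x$) gives
\[
	\int_0^N\d x_1\int_0^N\d x_2\ \bm{p}_t(x_1-x_2)
	= \frac{1}{2\pi}\int_{-\infty}^\infty |\hat{f}(\xi)|^2\,\widehat{\bm{p}_t}(\xi)\,\d\xi.
\]

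Next, I would compute the two transforms explicitly. On one hand, $\hat{f}(\xi)=(\e^{iN\xi}-1)/(i\xi)$, so
\[
	|\hat{f}(\xi)|^2 = \frac{2(1-\cos(N\xi))}{\xi^2}.
\]
On the other hand, $\widehat{\bm{p}_t}(\xi)=\e^{-t\xi^2/2}$. Substituting these into the Parseval identity yields
\[
	\int_0^N\d x_1\int_0^N\d x_2\ \bm{p}_t(x_1-x_2)
	= \frac{1}{\pi}\int_{-\infty}^\infty \frac{1-\cos(N\xi)}{\xi^2}\,\e^{-t\xi^2/2}\,\d\xi.
\]

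Finally, I would perform the change of variables $z=N\xi$, which turns $\xi^{-2}$ into $N^2/z^2$, $\d\xi$ into $N^{-1}\d z$, and $\e^{-t\xi^2/2}$ into $\e^{-tz^2/(2N^2)}$. The combined Jacobian factor is $N^2\cdot N^{-1}=N$, and the integrand becomes $\varphi(z)=(1-\cos z)/z^2$ as defined in \eqref{varphi}. This produces exactly the right-hand side
\[
	\frac{N}{\pi}\int_{-\infty}^\infty\varphi(z)\,\e^{-tz^2/(2N^2)}\,\d z,
\]
completing the proof. There is no real obstacle here: the computation is routine once the Parseval step is in place, and all integrals are absolutely convergent because $\varphi$ is integrable at infinity (decays like $z^{-2}$) and bounded near the origin.
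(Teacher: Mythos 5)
Your proof is correct and follows essentially the same route as the paper: both apply Plancherel's theorem to the quadratic form in $\mathbf{1}_{[0,N]}$, use $\widehat{\bm{p}_t}(\xi)=\e^{-t\xi^2/2}$ and $|\widehat{\mathbf{1}_{[0,N]}}(\xi)|^2=2(1-\cos(N\xi))/\xi^2$, and finish with the change of variables $z=N\xi$. The only cosmetic difference is that the paper first rescales $\widehat{\mathbf{1}_{[0,N]}}(y)=N\,\widehat{\mathbf{1}_{[0,1]}}(Ny)$ before substituting, which is the same computation.
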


\begin{proof}
	Plancherel's theorem implies that
	\begin{align*}
		\int_0^N\d x_1\int_0^N\d x_2\
			\bm{p}_t (x_1-x_2) &=
			\frac 1{ 2\pi} \int_{-\infty}^\infty  | \widehat{\mathbf{1}_{[0,N]}} (y) |^2
			\e^{ -ty^2/2}\, \d y \\
		&= \frac {N^2} { 2\pi} \int_{-\infty}^\infty  |\widehat{\mathbf{1}_{[0,1]}} (Ny) |^2
			\e^{ -ty^2/2}\, \d y.
	\end{align*}
	A change of variables  $[z=Ny]$ implies the lemma,
	since $|\widehat{\mathbf{1}_{[0,1]}} (z) |^2 = 2\varphi(z)$ for all $z\in\R$.
\end{proof}

 Finally, we mention the following simple inequality.

 \begin{lemma}\label{lem:easy:int}
 	For every $\varepsilon\in(0\,,1)$,
 	\[
		\int_{-\infty}^\infty
		\left( \varepsilon \wedge \frac{1}{z^2}\right)
		\log_+(1/|z|)\, \d z  < 10\sqrt\varepsilon.
	\]
 \end{lemma}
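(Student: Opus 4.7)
The integrand is even in $z$, so it suffices to work on $(0\,,\infty)$ and multiply by $2$. The natural split occurs at $z=1/\sqrt\varepsilon$, where the two terms in the minimum coincide:
\[
	\int_{-\infty}^\infty\left(\varepsilon\wedge\frac1{z^2}\right)\log_+(1/|z|)\,\d z
	= 2\varepsilon\int_0^{1/\sqrt\varepsilon}\log_+(1/z)\,\d z
	+ 2\int_{1/\sqrt\varepsilon}^\infty\frac{\log_+(1/z)}{z^2}\,\d z.
\]
The plan is to estimate each of these two integrals separately and then combine.

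For the tail integral, since $\varepsilon<1$ we have $1/\sqrt\varepsilon>1$, so $1/z\le 1$ on the range of integration and hence $\log_+(1/z)=\log(\e+1/z)\le\log(\e+1)<\tfrac74$. Thus
\[
	2\int_{1/\sqrt\varepsilon}^\infty\frac{\log_+(1/z)}{z^2}\,\d z
	\le \tfrac72\int_{1/\sqrt\varepsilon}^\infty\frac{\d z}{z^2}= \tfrac72\sqrt\varepsilon.
\]
For the inner integral, split the range $(0\,,1/\sqrt\varepsilon)$ at $z=1$. On $(1\,,1/\sqrt\varepsilon)$ we again use $\log_+(1/z)\le\log(\e+1)<\tfrac74$, yielding a contribution at most $2\varepsilon\cdot\tfrac74(1/\sqrt\varepsilon-1)\le \tfrac72\sqrt\varepsilon$. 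On $(0\,,1)$ I would substitute $u=1/z$ to write
\[
	\int_0^1\log_+(1/z)\,\d z=\int_1^\infty \frac{\log(\e+u)}{u^2}\,\d u,
\]
and bound this by a universal constant $C_0$ using $\log(\e+u)\le\log(2\e)$ for $u\in[1\,,\e]$ and $\log(\e+u)\le\log 2+\log u$ for $u\ge\e$; the two resulting integrals $\int_\e^\infty u^{-2}\,\d u$ and $\int_\e^\infty(\log u)u^{-2}\,\d u=2/\e$ are elementary. A direct numerical check shows that $C_0\le 21/10$, so this piece contributes at most $2\varepsilon C_0\le(21/5)\sqrt\varepsilon$, using $\varepsilon\le\sqrt\varepsilon$.

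Summing the three pieces gives a bound of the form $(\tfrac72+\tfrac72+\tfrac{21}{5})\sqrt\varepsilon<10\sqrt\varepsilon$, as desired. The only mildly delicate step is making the numerical constant in the $(0\,,1)$ piece small enough; all other bounds are one-line estimates. No subtlety is hidden behind the inequality $\varepsilon\le\sqrt\varepsilon$, which is valid precisely because $\varepsilon\in(0\,,1)$.
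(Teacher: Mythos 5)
Your decomposition is sound --- splitting at $z=1/\sqrt\varepsilon$, then at $z=1$, and substituting $u=1/z$ near the origin --- and each of your three individual estimates is correct: the tail is at most $\tfrac72\sqrt\varepsilon$, the piece over $(1,1/\sqrt\varepsilon)$ is exactly $\tfrac72(\sqrt\varepsilon-\varepsilon)\le\tfrac72\sqrt\varepsilon$, and $\int_0^1\log_+(1/z)\,\d z\le\tfrac{21}{10}$, so the piece near the origin contributes at most $\tfrac{21}{5}\varepsilon\le\tfrac{21}{5}\sqrt\varepsilon$. The genuine problem is your last line: $\tfrac72+\tfrac72+\tfrac{21}{5}=\tfrac{56}{5}=11.2$, which is \emph{not} less than $10$. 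As written you have proved $J(\varepsilon)<\tfrac{56}{5}\sqrt\varepsilon$, not the stated bound $J(\varepsilon)<10\sqrt\varepsilon$, so the proof does not establish the lemma with its asserted constant; the "only mildly delicate step" you flag (the size of $C_0$) was handled fine, but the final bookkeeping fails.

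The repair is cheap, in either of two ways. First, do not discard the $-\tfrac72\varepsilon$ from the middle piece: middle plus inner is $\tfrac72\sqrt\varepsilon-\tfrac72\varepsilon+\tfrac{21}{5}\varepsilon=\tfrac72\sqrt\varepsilon+\tfrac{7}{10}\varepsilon\le\tfrac{21}{5}\sqrt\varepsilon$, and adding the tail gives $\tfrac{77}{10}\sqrt\varepsilon<10\sqrt\varepsilon$. Alternatively, sharpen the lossy constants: $\log(\e+1)\approx1.313<\tfrac43$, and integration by parts gives the exact value $\int_0^1\log_+(1/z)\,\d z=(1+\e^{-1})\log(1+\e)<\tfrac95$, so the three pieces sum to at most $\bigl(\tfrac83+\tfrac83+\tfrac{18}{5}\bigr)\sqrt\varepsilon<9\sqrt\varepsilon$. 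For comparison, the paper splits instead at $z=1/\e$ (where the minimum is identically $\varepsilon$ on the inner region), bounds $\log_+(1/z)$ there by $2\log(1/z)$, and uses $\int_0^1\log(1/z)\,\d z=1$ together with a change of variables in the tail; your split at $1/\sqrt\varepsilon$ is equally natural but forces closer tracking of numerical constants, which is exactly where the slip occurred.
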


 \begin{proof}
 	Let $J(\varepsilon)$ denote the integral in question.
	Because $\varepsilon<1$ and $\log (2\e) \le 2$,
 	\[
 		J(\varepsilon) = 4\int_{1/\e}^\infty
		\left( \varepsilon \wedge \frac{1}{z^2}\right) \d z
		+2\varepsilon\int_0^{1/\e}\log(1/z)\,\d z
		< 4\varepsilon\int_{1/\e}^\infty
		\left( 1 \wedge \frac{1}{\varepsilon z^2}\right) \d z + 2\varepsilon,
	\]
	since $z\mapsto\log(1/z)$ defines a probability density
	function on $(0\,,1)$ and $0<\varepsilon<1$. Change variables to see that
 	\[
 		J(\varepsilon) < 4\sqrt\varepsilon\int_{\sqrt\varepsilon/\e}^\infty
		\left( 1 \wedge \frac{1}{r^2}\right) \d r + 2\varepsilon
		= 8\sqrt\varepsilon + 2\left( 1-\frac{2}{\e}\right)\varepsilon,
	\]
	which readily implies the result since $\varepsilon<\sqrt\varepsilon$.
 \end{proof}

\begin{lemma}\label{lem:||U-1||}
	Let $c_{T,k}$ be the constant defined in \eqref{||u(s,y)||}
	and set $C_T:=\pi^{1/4}2^{-1/2}\:c_{T,2}$ . Then,
	\[
		\sup_{x\in\R}\|U(t\,,x)-1\|_2\le
		C_T t^{1/4}\qquad\text{for all $t\in(0\,,T]$}.
	\]
\end{lemma}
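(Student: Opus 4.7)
The plan is to derive the bound directly from the stochastic evolution equation \eqref{E:U} for $U$, exploiting the built-in renormalization by the heat kernel.

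First I would start from
\[
	U(t\,,x) - 1 = \int_{(0,t)\times\R} U(s\,,y)\,\bm{p}_{s(t-s)/t}\!\left(y - \tfrac{s}{t}x\right)\eta(\d s\,\d y),
\]
and apply the It\^o--Walsh isometry to obtain
\[
	\|U(t\,,x) - 1\|_2^2 = \int_0^t\d s\int_{-\infty}^\infty \d y\ \|U(s\,,y)\|_2^2\,\bm{p}_{s(t-s)/t}^2\!\left(y - \tfrac{s}{t}x\right).
\]
Then I would use \eqref{||u(s,y)||} combined with \eqref{U} to deduce the key uniform bound $\|U(s\,,y)\|_2 = \|u(s\,,y)\|_2/\bm{p}_s(y) \le c_{T,2}$, valid for all $(s\,,y) \in (0\,,T]\times\R$. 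This is the step that makes the renormalized formulation \eqref{E:U} decisively more tractable than the original mild form.

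Next, I would carry out the spatial integral using the standard identity $\int_\R \bm{p}_\sigma^2(y-a)\,\d y = (4\pi\sigma)^{-1/2}$ (independent of $a$), which gives
\[
	\|U(t\,,x) - 1\|_2^2 \le \frac{c_{T,2}^2}{2\sqrt{\pi}}\int_0^t \sqrt{\frac{t}{s(t-s)}}\,\d s = \frac{c_{T,2}^2\sqrt t}{2\sqrt{\pi}}\int_0^t \frac{\d s}{\sqrt{s(t-s)}}.
\]
The remaining integral is a classical Beta integral equal to $\pi$ (change variables $s = tr$), yielding
\[
	\|U(t\,,x) - 1\|_2^2 \le \frac{c_{T,2}^2 \sqrt{\pi}}{2}\,\sqrt{t},
\]
and taking square roots produces the claimed constant $C_T = \pi^{1/4} 2^{-1/2} c_{T,2}$.

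There is essentially no obstacle here: the proof is a short, clean computation, and the only mild subtlety is recognizing that working with the scaled equation \eqref{E:U} (rather than the mild form \eqref{mild}) allows the $L^2$ norm of $U$ to be controlled uniformly in the spatial variable, which in turn makes the space integral of the squared kernel explicit via the identity \eqref{PPPP} or directly. The spatial uniformity of the final bound is transparent because the integrand, after translation, depends on $x$ only through the translation argument, which disappears upon integration.
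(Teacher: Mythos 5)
Your proposal is correct and follows essentially the same route as the paper: both start from \eqref{E:U}, apply the It\^o--Walsh isometry, bound $\|U(s,y)\|_2\le c_{T,2}$ via \eqref{U} and \eqref{||u(s,y)||}, and evaluate the spatial integral of the squared kernel (your identity $\int_\R\bm{p}_\sigma^2(y-a)\,\d y=(4\pi\sigma)^{-1/2}$ is exactly the paper's semigroup step $\int\bm{p}^2_{s(t-s)/t}=\bm{p}_{2s(t-s)/t}(0)$), arriving at the same bound $c_{T,2}^2\sqrt{\pi t}/2$ and hence the stated constant $C_T$.
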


\begin{proof}
	Owing to \eqref{E:U}, $\E [U(t\,,x)]=1$ for all $t\in(0\,, T ]$ and $x\in\R$,
	and
	\begin{align*}
		\Var [U(t\,,x)] &=\int_0^t\d s\int_{-\infty}^\infty\d y\
			\left| \bm{p}_{s(t-s)/t}\left(y- \frac{s}{t}x \right)\right|^2
			\E\left(|U(s\,,y)|^2\right)\\
		&\le c_{T,2}^2\int_0^t\d s\int_{-\infty}^\infty\d y\
			\left| \bm{p}_{s(t-s)/t}\left(y- \frac{s}{t}x \right)\right|^2
			&\text{[see \eqref{U} and \eqref{||u(s,y)||}]}\\
		&= c_{T,2}^2\int_0^t \bm{p}_{2s(t-s)/t}(0)\d s =
		  	c_{T,2}^2\sqrt{\pi t/4},
	\end{align*}
	thanks to the semigroup property of the heat kernel and a few computations.
	This completes the proof.
\end{proof}

\noindent
{\bf Acknowledgement.} We would like to thank the referees for their valuable
and useful comments. 
D. Khoshnevisan is supported in part by  NSF grants  DMS-1855439. D. Nualart is supported in part by  NSF grants DMS-1811181.  
F. Pu is grateful to University of Utah where the work was carried out.

\end{document}